\documentclass[pdflatex,sn-mathphys-num]{sn-jnl}

\usepackage{graphicx}%
\usepackage{multirow}%
\usepackage{amsmath,amssymb,amsfonts}%
\usepackage{amsthm}%
\usepackage{mathrsfs}%
\usepackage[title]{appendix}%
\usepackage{xcolor}%
\usepackage{textcomp}%
\usepackage{manyfoot}%
\usepackage{booktabs}%
\usepackage{algorithm,algorithmic}
\usepackage{graphicx}
\usepackage{epstopdf}
\usepackage{thmtools}%
\usepackage{siunitx}

\usepackage{booktabs}
\usepackage{multirow}      
\usepackage{makecell}     
\usepackage{amsopn}
\usepackage{hyperref}
\usepackage{cleveref}
\usepackage{subcaption}

\usepackage{listings}%

\theoremstyle{thmstyleone}%
\newtheorem{theorem}{Theorem}
\newtheorem{proposition}[theorem]{Proposition}%
\newtheorem{lemma}[theorem]{Lemma}%

\theoremstyle{thmstyletwo}%
\newtheorem{example}{Example}%

\theoremstyle{thmstylethree}%
\newtheorem{definition}{Definition}%

\newtheoremstyle{thmstylefour}{}{}{\itshape}{}{\bfseries}{.}{ }{}
\theoremstyle{thmstylefour}
\newtheorem{assumption}{Assumption}

\DeclareMathOperator{\argmin}{argmin}

\begin{document}

\title[Article Title]{An Infeasible Method with Feasibility Safeguard for Nonsmooth Composite Optimization Over Manifolds}


\author[1]{\fnm{Xiyuan} \sur{Xie}}\email{xiexy65@mail2.sysu.edu.cn}

\author*[2]{\fnm{Qia} \sur{Li}}\email{liqia@mail.sysu.edu.cn}


\affil[1]{\orgdiv{School of Mathematics}, \orgname{Sun Yat-sen University}, \orgaddress{\city{Guangzhou}, \postcode{510275}, \country{China}}}

\affil*[2]{\orgdiv{School of Computer Science and Engineering, Guangdong Province Key Laboratory of Computational Science}, \orgname{Sun Yat-sen University}, \orgaddress{\city{Guangzhou}, \postcode{510275}, \country{China}}}



\abstract{In this paper, we consider nonsmooth composite optimization over compact embedded submanifolds defined by nonlinear equality constraints. We propose a feasibility-safeguarded inexact proximal linearized method (FSIPL), which allows infeasible iterates while keeping them within a prescribed bounded neighborhood of the manifold. Each iteration approximately solves a strongly convex proximal linearized subproblem, performs a correction step to reduce constraint violation, and uses a merit-function-based nonmonotone backtracking line search to select stepsizes and accept trial iterates. The feasibility safeguard, incorporated into both correction and line search, controls infeasibility and makes the boundedness needed in the analysis a consequence of the algorithmic design. We prove finite termination of backtracking, subsequential convergence to stationary points, and an $O(\varepsilon^{-2})$ outer iteration complexity bound. Under a Kurdyka--\L{}ojasiewicz assumption on a suitable auxiliary function, we further establish full-sequence convergence. Numerical results on sparse PCA and sparse spectral clustering illustrate its efficiency.}

\keywords{Nonsmooth manifold optimization, Infeasible method, Feasibility safeguard, Kurdyka--\L{}ojasiewicz property}



\maketitle

\section{Introduction}
Let $\mathcal{M} \subseteq \mathbb{R}^n$ be a compact submanifold embedded in $\mathbb{R}^n$. In this paper, we consider the following nonsmooth composite optimization over $\mathcal{M}$:
 \begin{equation}\label{prob:primal}
\min_{x\in \mathcal{M}}~~ F(x):=f(x) + g(\mathcal{A}(x))
\end{equation}
where $f: \mathbb{R}^n \to \mathbb{R}$ and $\mathcal{A}: \mathbb{R}^n \to \mathbb{R}^p$ are continuously differentiable, and $g: \mathbb{R}^p \to \mathbb{R}$ is closed and convex.
For $a \geq 0$, the closed $a$-neighborhood of $\mathcal{M}$ is denoted by $\mathcal{M}_a$, that is,
\begin{equation*}
\mathcal{M}_a := \{ x \in \mathbb{R}^n : \text{dist}(x, \mathcal{M}) \leq a \},
\end{equation*}
where $\text{dist}(x, \mathcal{M})$ denotes the distance from $x$ to $\mathcal{M}$. We adopt the following blanket assumption on problem \eqref{prob:primal} throughout this paper.
\begin{assumption}\label{assp:N}
\hfill
\begin{enumerate}
    \item[(i)] The submanifold $\mathcal{M}$ can be identified by a global defining function \mbox{$h: \mathbb{R}^n \to \mathbb{R}^m$}, i.e., $\mathcal{M} := \{ x \in \mathbb{R}^n : h(x) = 0 \}$, and there exists $\kappa > 0$ such that 
   $ \text{dist}(x, \mathcal{M}) \leq \kappa \| h(x) \| $
    for all $x \in \mathbb{R}^n$. 
    \item[(ii)] $\nabla h(x):=[\nabla h_1(x),\dots,\nabla h_m(x)]^{\top}\in \mathbb{R}^{m\times n}$ is $L_h$-Lipschitz continuous over $\mathbb{R}^n$. Moreover, there exist $\theta > 0$, $0 < C_1 \leq C_2$ such that for all $x \in \mathcal{M}_{2\theta}$, the eigenvalues of $H(x) := \nabla h(x) \nabla h(x)^{\top}$ lie in $[C_1, C_2]$.
    \item[(iii)] $\nabla f$ is $L_f$-Lipschitz continuous over $\mathbb{R}^n$.
    \item[(iv)] $g$ is $\ell_g$-Lipschitz continuous over $\mathbb{R}^p$.
    \item[(v)] $\nabla \mathcal{A}$ is $L_{\mathcal{A}}$-Lipschitz continuous over $\mathbb{R}^n$.
\end{enumerate}
\end{assumption}
Before proceeding, we briefly comment on items (i) and (ii) of Assumption~1.
Item~(i) provides a global description of $\mathcal{M}$ together with a relation
between the constraint violation and the distance to the manifold. In particular,
if \(\|h(x)\|\le a/\kappa\) for some \(a>0\), then \(x\in \mathcal{M}_a\).
Item~(ii) plays two roles: the Lipschitz continuity of \(\nabla h\) allows us
to control the linearization error of \(h\), while the uniform eigenvalue bound
for \(H\) on \(\mathcal{M}_{2\theta}\) gives a uniform nondegeneracy for \(\nabla h\).
The requirements in items~(i) and~(ii) are natural for an infeasible method. Indeed, allowing the iterates to leave $\mathcal{M}$ requires an error bound that converts
small constraint violation into closeness to $\mathcal{M}$, together with uniform
nondegeneracy of the Jacobian \(\nabla h\) in a neighborhood of $\mathcal{M}$.
These two properties are exactly the contents of items~(i) and~(ii), respectively,
and are verified in Appendix~\ref{append} for standard compact matrix manifolds, including
the Stiefel manifold and the oblique manifold.

Problem \eqref{prob:primal} has many important applications in machine learning and signal processing. Below, we present two representative examples and refer the interested readers to \citet{chenProximalGradientMethod2020} and \citet{BriefIntroductiontoManifoldOptimization} for more examples.  

\begin{example}
   \textbf{Sparse Principal Component Analysis (SPCA)}. The classical PCA (\citet{Hotelling1933Analysis}) is one of the most widely used dimensionality reduction techniques, which minimizes the total reconstruction error over all samples. Sparse PCA is further proposed to promote sparsity in the principal components (\citet{jolliffe2003modified}) as follows:
   \begin{equation}\label{SPCA}
       \min_{X\in \mathrm{St}(n, p)}-\mathrm{Tr}(X^\top B^{\top} BX)+\mu \left\|X\right\|_1, 
   \end{equation}
where $B\in\mathbb{R}^{m\times n}$, $\mu>0$ is a weighting parameter and 
$\left\|X\right\|_1:=\sum_{i,j}\left|x_{ij}\right|$ is the $\ell_1$-norm of the matrix $X$.  Note that problem \eqref{SPCA} is an instance of  problem \eqref{prob:primal} with $\mathcal{M}$ being the Stiefel manifold $\mathrm{St}(n,p)=\{X\in\mathbb{R}^{n\times p}:X^{\top}X=I_p\}$,  $g(X)=\mu\|X\|_1$ and $f(X)=-\mathrm{Tr}(X^\top B^{\top} BX)$.
\end{example}

\begin{example}\textbf{Sparse Spectral Clustering (SSC)}. The SSC problem aims to partition $n$ data samples into 
$p$ groups such that similar data points are clustered together. In spectral clustering, a symmetric affinity matrix $W = [W_{ij}]_{n\times n}$ is constructed, where $W_{ij}\geq 0$
 quantifies the pairwise similarity between samples $a_i$ and $a_j$. To promote sparsity and interpretability, the sparse spectral clustering  framework is introduced in \citet{2016ConvexSparseSpectralClustering},  as follows
\begin{equation}\label{SSC}
    \min_{X\in\mathrm{St}(n,p)}\mathrm{Tr}( L^{\top} XX^{\top})+\mu\|XX^{\top}\|_1,
\end{equation}
where $\mu > 0$ is the regularization parameter, $L = I_n -S^{-1/2}WS^{-1/2}$ is the normalized Laplacian matrix, 
$S^{1/2}$ is the diagonal matrix with diagonal elements $\sqrt{s_1}, \sqrt{s_2}, \dots, \sqrt{s_n}$
and $s_i = \sum_{j} W_{ij}$. 
Again, problem \eqref{SSC} is an instance of problem \eqref{prob:primal} with $\mathcal{M}$ being the Stiefel manifold $\mathrm{St}(n,p)$,  $f(X)=\mathrm{Tr}( L^{\top} XX^{\top})$ and $g(\mathcal{A}(X))=\mu\|XX^{\top}\|_1$.
\end{example}

\subsection{Related works}
Nonsmooth composite optimization over Riemannian manifolds has attracted considerable attention in recent years. A variety of methods have been developed for solving problem \eqref{prob:primal} or its simplified variants. Broadly speaking, these methods can be divided into two categories according to whether the generated iterates remain feasible with respect to the manifold constraint.

The first category consists of feasible methods, including Riemannian subgradient methods (\citet{grohs2016nonsmooth,hosseini2018line,hosseini2017riemannian}), operator splitting methods (\citet{chen2016augmented,kovnatsky2016madmm,lai2014splitting,li2025riemannian,Zhou2022A,zhu2017nonconvex}), Riemannian proximal gradient methods (\citet{chenProximalGradientMethod2020,huang2022riemannian,huang2023inexact,wang2022manifold,he2025inexact,zheng2025new,jiang2025inexact}), smoothing-type methods (\citet{beck2023dynamic,peng2023riemannian,zhu2024smoothing}), and the Riemannian minimax optimization algorithms (\citet{xie2025proximal,xu2026riemannian}). These methods keep the iterates on the manifold throughout the algorithmic process and therefore typically require a retraction step at each iteration. However, the cost of computing retractions may become significant in large-scale problems. For instance, on the Stiefel manifold, retraction can be expensive when the matrix dimension is large. Among the methods mentioned above, the augmented Lagrangian method (\citet{Zhou2022A}), (inexact) manifold proximal linear algorithm (\citet{wang2022manifold,zheng2025new}), smoothing-type methods (\citet{beck2023dynamic,peng2023riemannian,zhu2024smoothing}), and Riemannian minimax optimization algorithms (\citet{xie2025proximal,xu2026riemannian}) can be applied to the general model \eqref{prob:primal}, whereas many of the remaining methods are designed only for simplified settings in which $\mathcal{A}$ is the identity or a linear mapping.

The second category consists of infeasible methods, which allow infeasible iterates and thus avoid performing a retraction step at every iteration. From the viewpoint of constrained Euclidean optimization, problem~\eqref{prob:primal} can also be viewed as a nonsmooth composite optimization problem with nonlinear equality constraints. For this formulation, the augmented Lagrangian method and its variants are among the most commonly used approaches; see
\citet{bolte2018nonconvex,chen2017augmented,de2023constrained,hallak2023adaptive,lu2012augmented,xie2021complexity} for example. A major drawback of these methods is that their subproblems are
usually highly nonconvex due to the nonlinear constraints and can therefore be difficult to solve in practice. In addition, their convergence analyses often require boundedness of the multiplier sequence, which is typically imposed as an external assumption and may be difficult to verify. Very recently, inspired
by SQP methods for smooth constrained optimization
(\citet{chen2020penalty,gould2010nonlinear,liu2011sequential,ulbrich2003non}) and proximal gradient methods for unconstrained optimization, \citet{dai2025proximal} proposed a proximal gradient method (PG-SQP) for solving the special case of problem~\eqref{prob:primal} with $\mathcal A=I$. Compared with augmented Lagrangian methods, PG-SQP has computationally simpler subproblems and hence may enjoy lower per-iteration cost. However, its convergence analysis still relies on boundedness of the generated sequence, which is not guaranteed by the method itself.

Recently, several infeasible methods have been developed for the special case of problem~\eqref{prob:primal} in which $\mathcal A$ is the identity and $\mathcal M$ is the Stiefel manifold. These methods do not impose an additional boundedness assumption on the generated sequence. In particular, when $g$ is the
$\ell_{2,1}$-norm, \citet{xiao2021exact} developed an
exact-penalty convex constrained proximal gradient algorithm. Later, motivated by SQP methods for smooth equality-constrained optimization, \citet{liu2024penalty} proposed a sequential linearized proximal gradient method (SLPG), which alternates between tangential steps and normal steps in order to improve optimality and feasibility, respectively. More recently, \citet{hu2024constraint} proposed a constraint-dissolving reformulation for general nonsmooth optimization over the Stiefel manifold. Based on this reformulation, they developed a stochastic proximal subgradient algorithm for the specialization of problem \eqref{prob:primal} with $\mathcal{A}=I$ over the Stiefel manifold. Their algorithm further allows the component corresponding to $f$ in problem \eqref{prob:primal} to be nonsmooth and merely locally Lipschitz continuous. A common feature of these algorithms is that they control infeasibility through an explicit upper bound on the stepsize, so that the iterates remain in a prescribed bounded region around the Stiefel manifold. A similar stepsize-based control of infeasibility also appears in landing-type algorithms (\citet{ablin2022fast,ablin2024infeasible,gao2022optimization,schechtman2023orthogonal,vary2024optimization}) for smooth manifold optimization. Despite their promising numerical performance, these methods and their convergence analyses rely on the special structure $\mathcal A=I$ and the Stiefel manifold. Consequently, they do not directly extend to the general setting of problem~\eqref{prob:primal}, where
$\mathcal A$ is a smooth nonlinear mapping and $\mathcal M$ is an embedded submanifold described by a global defining function.

The above discussion points to the need for an efficient infeasible method for the general problem \eqref{prob:primal}, in which each iteration only involves an easily tractable subproblem and the convergence can be established without imposing additional assumptions beyond the blanket assumption stated above. To handle the nonlinear composite mapping $\mathcal{A}$ and the general equality-defined manifold $\mathcal{M}$, we build the method around an inexact proximal linearized step, which leads to tractable subproblems. Another challenge is to control the infeasibility of the iterates. While stepsize-based controls in Stiefel-specific infeasible methods provide useful insight, relying only on prescribed a priori stepsize bounds may be overly conservative for the general model considered here. We therefore introduce a feasibility-safeguarded mechanism that controls infeasibility without relying solely on such prescribed bounds, so that the boundedness of the generated sequence required in the analysis follows directly from the algorithmic design.

\subsection{Contributions}
In this work, we propose a first-order infeasible method, called the feasibility-safeguarded inexact proximal linearized (FSIPL) method, for solving the general nonsmooth composite problem \eqref{prob:primal}. Compared with the Stiefel-specific infeasible methods for nonsmooth manifold optimization reviewed above, FSIPL is developed for a composite model with a smooth, possibly nonlinear mapping $\mathcal{A}$, over compact embedded submanifolds described by nonlinear equality constraints. This broader setting is the starting point for the inexact proximal linearized and feasibility-safeguarded design described below.

Algorithmically, each iteration of FSIPL consists of three stages. The first stage computes an inexact proximal linearized step through the dual formulation of a linearly constrained strongly convex subproblem. This step is the main device for handling the nonlinear composite mapping $\mathcal{A}$ and the general equality-defined manifold $\mathcal{M}$, while keeping the subproblem computationally tractable; moreover, the inexactness criterion used in this stage is directly verifiable. The second stage performs a correction step to reduce the constraint violation. The third stage applies a merit-function-based nonmonotone backtracking line search to determine the stepsizes and the acceptance of the trial iterate. The feasibility safeguard is incorporated into both the correction step and the line-search procedure. This mechanism controls infeasibility throughout the algorithmic process without relying solely on prescribed a priori stepsize bounds. Consequently, the generated sequence remains in a prescribed bounded neighborhood of the manifold, while the line search can accept less conservative stepsizes whenever the safeguard and descent conditions are satisfied. The boundedness needed in the convergence analysis is therefore guaranteed by the algorithm itself rather than imposed as an additional assumption.

On the theoretical side, we establish finite termination of the backtracking procedure, subsequential convergence, and an outer iteration complexity bound for FSIPL. In particular, every accumulation point of the generated sequence is shown to be a stationary point of problem \eqref{prob:primal}, and an $\varepsilon$-stationary point can be found within $O(\varepsilon^{-2})$ outer iterations. This complexity order matches that of feasible manifold proximal-gradient-type methods, but is obtained here for an infeasible and inexact scheme without requiring exact solution of the proximal subproblem or a retraction at every iteration. We further prove full-sequence convergence under a Kurdyka--\L{}ojasiewicz assumption on a suitable auxiliary function. The auxiliary function is constructed to capture both infeasibility and inexactness effects, which are the main obstacles in applying standard KL convergence arguments directly.

Finally, we report numerical results on sparse principal component analysis and sparse spectral clustering. The former corresponds to the case $\mathcal A=I$, while the latter involves a nonlinear composite mapping. These experiments illustrate the practical efficiency of the proposed FSIPL method.

\subsection{Organization}
The rest of the paper is organized as follows. Section~\ref{sec-Pre}  introduces notation and preliminary concepts. Section~\ref{sec:FSIPL} presents the FSIPL method and establishes its subsequential convergence and iteration complexity. Global sequential convergence under the Kurdyka--\L{}ojasiewicz property is proved in Section~\ref{sec:converge}. Section~\ref{sec:numerical} reports numerical experiments on sparse principal component analysis and sparse spectral clustering.

\section{Preliminaries}\label{sec-Pre}
 In this section, we recall some basic notation and preliminary results which will be used in this paper.  Let 
 $\mathbb{R}^n$ denote an $n$-dimensional Euclidean space with inner product $\langle x, y\rangle:=x^{\top}y$ for all $x,~y\in\mathbb{R}^n$, and the induced norm is denoted by $\left\|\cdot\right\|$. The inner product on matrix spaces is given by the Frobenius inner product $\langle \cdot, \cdot \rangle_{F}$, i.e., $\langle A, B \rangle_F:= \operatorname{Tr}(A^\top B)$ for any matrices $A,B$ of compatible dimensions. Let $\mathbb{R}^n_{+}:=\{x:=(x_1,\dots,x_n)\in\mathbb{R}^n:x_i\geq0,~i=1,\dots,n\}$ and $\mathbb{R}^n_{++}:=\{x:=(x_1,\dots,x_n)\in\mathbb{R}^n:x_i>0,~i=1,\dots,n\}$. Let $\mathbb{S}^n$ denote the set of symmetric matrices in $\mathbb{R}^{n\times n}$. Given a point $x\in\mathbb{R}^n$ and a closed set $C\subseteq \mathbb{R}^n$, let  $\mathrm{dist}(x,C):=\inf_{y\in C}\|y-x\|$, $\mathrm{Proj}_{C}(x):=\argmin_{y\in C}\|y-x\|$, and $\delta_C$ be the indicator function associated with $C$.  For a smooth function $\psi:\mathbb{R}^n\to\mathbb{R}^m$, we write $\psi=[\psi_1,\dots,\psi_m]^{\top}$ and $\nabla \psi(x)=[\nabla \psi_1(x),\dots,\nabla \psi_m(x)]^{\top}\in\mathbb{R}^{m\times n}$. For given $t>0$, the Moreau envelope of a function $\phi:\mathbb{R}^n\to\mathbb{R}$ is defined by $M^{t}_\phi(x):=\min_{y\in\mathbb{R}^n} \big\{\phi(y)+\frac{1}{2t}\|y-x\|^2\big\}$, and the proximal operator is defined by $\mathrm{prox}_{t\phi}(x):=\operatorname{argmin}_{y\in\mathbb{R}^n} \big\{\phi(y)+\frac{1}{2t}\|y-x\|^2\big\}$. If $\phi$ is a closed convex function, then $\nabla M^{t}_\phi(x)=\frac{1}{t}(x-\mathrm{prox}_{t\phi}(x))$. For a function  $\psi: \mathbb{R}^n \to (-\infty, \infty]$, denote $\mathrm{dom}\,\psi:=\{x\in\mathbb{R}^n:\psi(x)<\infty\}$.
\begin{definition}\label{def:subdiff}
     Let $ \psi: \mathbb{R}^d \to (-\infty, \infty] $ be a proper and lower semicontinuous (lsc) function, and $ z \in \text{dom}\,\psi $.  
\begin{itemize}
    \item[(i)] The Fréchet (regular) subdifferential of $ \psi $ at $ z $, denoted by $ \hat{\partial} \psi(z) $, is the set of all vectors $ v \in \mathbb{R}^d $ satisfying  
\[
\psi(x) \geq \psi(z) + \langle v, x - z \rangle + o(\|x - z\|).
\]  

\item[(ii)] The (limiting) subdifferential of $ \psi $ at $ z $, denoted by $ \partial \psi(z) $, is the set of all vectors $ v \in \mathbb{R}^d $ such that there exist sequences $ \{z^k\}_{k \in \mathbb{N}} $ and $ \{v^k\}_{k \in \mathbb{N}} $, where $ z^k \to z $, $ \psi(z^k) \to \psi(z) $, $ v^k \in \hat{\partial} \psi(z^k) $, and $ v^k \to v $.  
\end{itemize}
For $ z \notin \text{dom}\,\psi $, we set $ \hat{\partial} \psi(z) := \partial \psi(z) := \emptyset $.  
\end{definition}
It follows that if $ \{z^k\}_{k \in \mathbb{N}} $ and $ \{v^k\}_{k \in \mathbb{N}} $ are such that $ z^k \to z $, $ \psi(z^k) \to \psi(z) $, $ v^k \in {\partial} \psi(z^k) $, and $ v^k \to v $, then $v\in\partial \psi (z)$.   When $ \psi $ is convex, the limiting subdifferential reduces to the classical subdifferential of the convex function $ \psi $. Furthermore, we also have the following useful subdifferential rules (\citet[Exercise~8.8(c),~Corollary~8.11,~Proposition~8.12,~Theorem~10.6]{rockafellar1998variational}):  
\[
\partial (\psi + \phi)(x) = \partial \psi(x) + \nabla \phi(x),~~\partial\left(\psi\big(\mathcal{F}(x)\big)\right)=\nabla\mathcal{F}(x)^{\top}\partial\psi(\mathcal{F}(x)),~~ \forall x \in \mathbb{R}^d,
\]  
where $\psi$ is proper closed convex, and $\phi:\mathbb{R}^{d}\to\mathbb{R}$, $\mathcal{F}:\mathbb{R}^{r}\to\mathbb{R}^d$ are continuously differentiable. Note that for $ x \notin \text{dom}\,\psi $, $\partial \psi(x)=\emptyset$, and it follows that $ \partial (\psi + \phi)(x)=\emptyset + \nabla \phi(x) = \emptyset $. 

If $x^*$ is a local minimizer of problem \eqref{prob:primal} under Assumption \ref{assp:N}, then the following KKT condition holds (\citet[Example~6.8,~Theorem~8.15,~Example~10.8]{rockafellar1998variational}): there exists $\lambda^*\in \mathbb{R}^m$ such that
\begin{equation}\label{KKT}
    \begin{cases}
0\in
\nabla f(x^*)+\nabla A(x^*)^\top\partial g(A(x^*))
+\nabla h(x^*)^\top\lambda^*,\\
h(x^*)=0.
\end{cases}
\end{equation}

This naturally leads to the following definition of optimization stationarity.
\begin{definition}\label{def:optimization-stationary}
(Optimization stationarity).
\par ~~
\begin{enumerate}
    \item[(i)] We say $x^*$ is a stationary point of problem \eqref{prob:primal} if there exists
\(\lambda^*\in\mathbb R^m\) such that the KKT condition~\eqref{KKT} holds.
    \item[(ii)] Let $\varepsilon>0$ be given and define
    \begin{equation*}
    Res(x,v,\lambda):=\max\{\mathrm{dist}(0,\nabla f(x)+\nabla\mathcal{A}(x)^{\top}\partial g(\mathcal{A}(x)+v)+ \nabla h(x)^{\top}\lambda),~\|v\|,~ \|h(x)\|\}.
\end{equation*} We say that $x$ is an $\varepsilon$-stationary point of problem \eqref{prob:primal} if there exist $v\in\mathbb{R}^p$ and  $\lambda\in \mathbb{R}^m$  such that
\begin{equation}\label{eps-KKT}
    Res(x,v,\lambda)<\varepsilon.
\end{equation}
\end{enumerate}
\end{definition}
The notion in item (ii) is an approximate counterpart of item (i). Indeed, the term $\|h(x)\|$ measures the feasibility violation, the term $\|v\|$ measures the discrepancy between the shifted composite argument $\mathcal{A}(x)+v$ and the original one $\mathcal{A}(x)$, and the distance term measures violation of the stationarity inclusion. In particular, if $Res(x,v,\lambda)=0$ for some $\lambda\in\mathbb{R}^m$, then $h(x)=0$ and the KKT condition \eqref{KKT} holds, which implies that $x$ is a stationary point of problem \eqref{prob:primal}.

\section{The proposed FSIPL algorithm and convergence analysis}\label{sec:FSIPL}
In this section, we first propose the feasibility-safeguarded inexact proximal linearized method (FSIPL) for solving problem \eqref{prob:primal} in Section \ref{subsec-FSIPL procedure}. Then we establish its outer iteration complexity for finding an $\varepsilon$-stationary point in Section \ref{subsec-sub-convergence}, and prove its subsequential convergence to a stationary point of problem \eqref{prob:primal}. 

\subsection{The FSIPL algorithm}\label{subsec-FSIPL procedure}
Each iteration of the proposed FSIPL method mainly consists of three stages. Let $x^k$ be the $k$-th iterate (possibly $x^k \notin \mathcal{M}$)  and define $\mathcal{M}^k := \left\{ x \in \mathbb{R}^n : h(x) = h(x^k) \right\}$ and $T_{x^k}\mathcal{M}^k:=\{d\in\mathbb{R}^n:\nabla h(x^k)d=0\}$. Clearly, $\mathcal{M}^k$ is also an embedded submanifold in $\mathbb{R}^n$ when $x^k$ is sufficiently close to $\mathcal{M}$. At the first stage of the k-th iteration, we (inexactly) solve the following strongly convex problem to obtain a descent direction $d^k$ within the tangent space $T_{x^k}\mathcal{M}^k$:
\begin{equation}\label{exact-d}
    \min_{d \in T_{x^k}\mathcal{M}^k} \langle \nabla f(x^k), d \rangle + g\left(\mathcal{A}(x^k)+\nabla\mathcal{A}(x^k)d\right)  + \frac{1}{2t_k} (\| d \|^2 + \|\nabla \mathcal{A}(x^k)d\|^{2}),
\end{equation}
where  $0<\underline{t}\leq t_k\leq \overline{t}$ is a proximal parameter with $\underline{t}$ and $\overline{t}$ being predetermined. To address \eqref{exact-d}, we first reformulate it into 
\begin{equation}\label{exact-d-2}
    \begin{aligned}
       \min~~&\langle \nabla f(x^k), d\rangle
    +g(\mathcal{A}(x^k)+v)+\frac{1}{2t_k}\|d\|^2+\frac{1}{2t_k}\|v\|^2\\
    \text{~s.t.~}~~&{d\in T_{x^k}\mathcal{M}^k,~v=\nabla\mathcal{A}(x^k)d}.
\end{aligned}
\end{equation}
It is not hard to obtain the dual problem of \eqref{exact-d-2} (in a minimization form with constants omitted) :
\begin{equation}\label{prob:dual-comp}
    \begin{aligned}
\min_{(\lambda,~\mu)\in\mathbb{R}^m\times \mathbb{R}^p}~\{{G}_k(\lambda,\mu):=&\frac{t_k}{2}\|\nabla f(x^k)+\nabla h(x^k)^{\top}\lambda+\nabla \mathcal{A}(x^k)^{\top}\mu\|^2\\
&+\frac{t_k}{2}\|\mu\|^2-M^{t_k}_g(\mathcal{A}(x^k)+t_k\mu)\}.
\end{aligned}
\end{equation}
Invoking the property of the Moreau envelope (\citet[Theorem~6.60]{Beck2017First}), we see that $G_k$ is convex and continuously differentiable with the gradient:
\begin{equation}\label{GG-comp-0}
\begin{aligned}
     \nabla_{\lambda}{G}_k(\lambda,\mu)=&
         t_k\nabla h(x^k)(\nabla f(x^k)+\nabla h(x^k)^{\top}\lambda+\nabla \mathcal{A}(x^k)^{\top}\mu),\\
        \nabla_{\mu}{G}_k(\lambda,\mu)=&t_k\nabla\mathcal{A}(x^k)(\nabla f(x^k)+\nabla h(x^k)^{\top}\lambda+\nabla \mathcal{A}(x^k)^{\top}\mu)\\
        &+\mathrm{prox}_{t_kg}(\mathcal{A}(x^k)+t_k\mu)-\mathcal{A}(x^k).
 \end{aligned}
 \end{equation}
Although one can recover the unique optimal solution to \eqref{exact-d-2} from those to the dual problem, in practice, it may be computationally expensive to exactly solve \eqref{prob:dual-comp}. Therefore, we propose to inexactly solve \eqref{prob:dual-comp}, which admits an approximate solution $(\lambda^k, \mu^k)$ to \eqref{prob:dual-comp} satisfying a verifiable inexact condition 
\begin{equation}\label{inexact-solution-comp}
    \|\nabla G_k(\lambda^k,\mu^k)\|\leq \Delta_k,
\end{equation}
where $0\leq \Delta_k\leq\overline{\Delta}$ with $\overline{\Delta}>0$ being predetermined.

Inspired by the relation between optimal solution to \eqref{exact-d-2} and \eqref{prob:dual-comp}, the descent direction $d^k$ is evaluated by 
\begin{equation}\label{def:d^k-comp}
d^k=-t_k(\nabla f(x^k)+\nabla h(x^k)^{\top}\lambda^k+\nabla \mathcal{A}(x^k)^{\top}\mu^k),
\end{equation}
and we move the current iterate $x^k$ along $d^k$ with some stepsize $\eta_k>0$ to obtain an intermediate iterate $y^k$, i.e.,
\begin{equation}\label{step2}
    y^k:=x^k+\eta_k d^k.
\end{equation}
Next, at the second stage, we need to reduce the feasibility violation due to $y^k\notin \mathcal{M}$.
Let $N(x):=\frac{1}{2}\|h(x)\|^2$. We generate the trial iterate $\hat{x}^{k+1}$ by 
\begin{equation}\label{update-x}
        \hat{x}^{k+1}=\begin{cases}
            y^k-\tau_k\nabla N(y^k),& \|h(y^k)\|\leq \frac{\theta}{\kappa},\\
            \mathrm{Proj}_{\mathcal{M}}(y^k),& \text{otherwise},
        \end{cases}
    \end{equation} 
where $\tau_k>0$ is a stepsize,  $\kappa$ and $\theta$ are defined in Assumption \ref{assp:N} (i) and (ii) respectively. Here, the condition $\|h(y^k)\|\le \theta/\kappa$ is used as a
feasibility safeguard rule for the intermediate iterate $y^k$. The projection
in \eqref{update-x} is not a routine retraction or projection step; it is invoked only
when this safeguard rule is violated. When this rule is satisfied, the method takes the correction step
$y^k-\tau_k\nabla N(y^k)$ in \eqref{update-x} to reduce the constraint violation. As shown in Theorem~\ref{the:convger-comp},
$\lim_{k\to\infty}\|h(x^k)\|=0$ and
$\lim_{k\to\infty}\|d^k\|=0$. Together with
\eqref{step2}, this implies that the safeguard rule holds for
all sufficiently large $k$. Hence, the projection case in \eqref{update-x} can occur
only finitely many times. This is also consistent with the numerical results on SPCA in Section~\ref{SPCA-exp}, where FSIPL requires only a very small number of projection steps.

Finally, at the third stage, we determine whether the trial iterate $\hat{x}^{k+1}$ is accepted by utilizing a merit function $\Phi_{\alpha}:\mathbb{R}^n\to\mathbb{R}$, which for a parameter $\alpha>0$ is defined as
\begin{equation}
    \Phi_{\alpha}(x):=F(x)+\alpha\|h(x)\|.
\end{equation}
Specifically, let $\{\rho_k:k\in\mathbb{N}\}$ be a nonnegative and summable scalar sequence, i.e.,  \begin{equation}\label{def:seq}
    \{\rho_k\}\in \mathcal{S}:=\big\{\{\rho_k:k\in\mathbb{N}\} :  \sum_{k=0}^{\infty}\rho_k<+\infty,~\rho_k\geq 0, k=0,\dots\big\},
\end{equation}
and let
\begin{equation}\label{def:v^k-comp}
    v^k:=\mathrm{prox}_{t_kg}(\mathcal{A}(x^k)+t_k\mu^k)-\mathcal{A}(x^k).
\end{equation} 
If $\|h(\hat{x}^{k+1})\|\leq\frac{\theta}{\kappa}$ and 
\begin{equation}\label{Desc:Phi-comp-L}
      \begin{aligned}
         {\Phi}_{\alpha}(\hat{x}^{k+1})\leq &{\Phi}_{\alpha}(x^k)-\frac{\sigma}{2}(\eta_{k}^2\|d^{k}\|^2+\tau_k^2\|h(y^k)\|)\\
         &-\frac{\eta_k}{2t_k}\|v^k\|^2+\eta_{k}(\alpha+\ell_g+\|\lambda^{k}\|+\|\mu^{k}\|)\Delta_{k}+\rho_k,
    \end{aligned}
  \end{equation}
the $\hat{x}^{k+1}$ is accepted and we set $x^{k+1}=\hat{x}^{k+1}$; otherwise, we decrease the stepsizes $\eta_k$ and $\tau_k$, and then re-compute $y^k$ by \eqref{step2} and accordingly generate a new trial iterate $\hat{x}^{k+1}$ by \eqref{update-x}. Here, we use the same feasibility safeguard rule for $\hat{x}^{k+1}$ as in the second stage for $y^k$. This together with the nonmonotone descent condition \eqref{Desc:Phi-comp-L} constitutes a backtracking line-search criterion for determining the stepsizes $\eta_k$ and $\tau_k$. We shall prove in Proposition \ref{prop:Desc:Phi-comp} that this criterion must be fulfilled within finite inner iterations if $\alpha$ satisfies \eqref{def-alpha}. Moreover, as it will be seen in Section \ref{subsec-sub-convergence}, by incorporating the feasibility safeguard strategies, we guarantee that the solution sequence $\{x^k : k \in \mathbb{N}\}$ is contained in $\mathcal{M}_\theta$. This implies the boundedness of $\{x^k : k \in \mathbb{N}\}$ and this yields subsequential convergence without imposing any external assumption.

Now, we are ready to formally present the complete framework of the FSIPL method in Algorithm~\ref{FSIPL-Line search-comp}. In Line~7, the update of $\hat x^{k+1}$ is carried out by \eqref{update-x}, and the projection onto $\mathcal{M}$ is invoked only in the second case of \eqref{update-x}, namely when the feasibility safeguard rule for $y^k$ is violated. 

\begin{algorithm}[!ht]
 	\caption{Feasibility-safeguarded Inexact Proximal Linearized (FSIPL) Method }
     \label{FSIPL-Line search-comp}
    \begin{algorithmic}[1] 
        \REQUIRE $x_0\in \mathcal{M}$,  $\gamma\in(0,1)$, $\alpha >0$, $(\underline{t},\overline{t},\overline{\Delta},\overline{\eta},\overline{\tau},\theta,\kappa,\sigma)\in\mathbb{R}^8_{++}$, $\{\rho_k:k\in\mathbb{N}\}\in \mathcal{S}$  ; 
         \FOR{$k=0,1,2, \dots$}
         \STATE Choose $t_k\in[\underline{t},\overline{t}]$ and $\Delta_k\in[0,\overline{\Delta}]$.
         \STATE Approximately solve \eqref{prob:dual-comp} to obtain $({\lambda}^k,\mu^k)$ such that the inexact condition \eqref{inexact-solution-comp} holds.
         \STATE Compute $d^k$ by \eqref{def:d^k-comp} and $v^k$ by \eqref{def:v^k-comp}
         \STATE Set $\eta_k\leftarrow\overline{\eta}$ and $\tau_k\leftarrow\overline{\tau}$.
         \STATE Compute  $y^k$ via \eqref{step2}.    
         \STATE Update $\hat{x}^{k+1}$ via \eqref{update-x}.  
         \IF{$\hat{x}^{k+1}$ satisfies $\|h(\hat{x}^{k+1})\|\leq\frac{\theta}{\kappa}$ and \eqref{Desc:Phi-comp-L}, }
         \STATE set $x^{k+1}\leftarrow\hat{x}^{k+1}$,
         \ELSE
         \STATE  set $\eta_k\leftarrow\gamma\eta_k$ and $\tau_k\leftarrow\frac{\gamma}{2}\tau_k$, 
         \STATE go to \textbf{Line 6}.
         \ENDIF
         \ENDFOR         
    \end{algorithmic}
\end{algorithm}

{To end this subsection, we remark that in the first stage of FSIPL, an approximate solution $(\lambda^k,\mu^k)$ satisfying \eqref{inexact-solution-comp}, as well as $(d^k,v^k)$ defined in \eqref{def:d^k-comp} and \eqref{def:v^k-comp}, can be computed in a simple way in the case of $\mathcal{A} = I_n$. Specifically, we consider the following strongly convex subproblem:
\begin{equation}\label{primal-A=I}
       \min_{\substack{d \in T_{x^k}\mathcal{M}^k}}\langle \nabla f(x^k), d\rangle+g(x^k+d)+\frac{1}{t_k}\|d\|^2,
\end{equation}
and its dual problem (in a minimization form with constants omitted) 
\begin{equation}\label{dual-A=I}
    \begin{aligned}
    \min_{\lambda\in\mathbb{R}^m} \widetilde{G}_k(\lambda):=&t_k\|2(\nabla f(x^k)+\nabla h(x^k)^{\top}\lambda)\|^2\\&-M^{t_k/2}_g\left(x^k-2t_k(\nabla f(x^k)+\nabla h(x^k)^{\top}\lambda)\right).
\end{aligned}
\end{equation}
We inexactly solve \eqref{dual-A=I} to find $\lambda^k\in\mathbb{R}^m$ such that $\|\nabla \widetilde{G}_k(\lambda^k)\|\leq\Delta_k$. By the relation between optimal solutions of \eqref{primal-A=I} and \eqref{dual-A=I}, we set
\begin{equation}\label{def-d^k-A=I}
   d^k=\mathrm{prox}_{\frac{t_k}{2}g}\left(x^k-2t_k(\nabla f(x^k)+\nabla h(x^k)^{\top}\lambda^k)\right)-x^k.
\end{equation}
Accordingly, we set $v^k=d^k$ and 
\begin{equation}\label{def-mu-A=I}
    \mu^k=-\frac{1}{t_k}d^k-(\nabla f(x^k)+\nabla h(x^k)^{\top}\lambda^k).
\end{equation}
As shown in Appendix \ref{append-2}, the iterates $\lambda^k,~d^k,~v^k,~\mu^k$ evaluated as above are consistent with \eqref{def:d^k-comp}, \eqref{def:v^k-comp}, and satisfy the inexactness criterion \eqref{inexact-solution-comp}. Therefore, our subsequent convergence remains applicable.
}

\subsection{Subsequential convergence and iteration complexity}\label{subsec-sub-convergence}
This subsection is devoted to the subsequential convergence and iteration complexity of the proposed FSIPL method. We first establish some basic properties regarding \textbf{Line 2- 7} in a single iteration of Algorithm \ref{FSIPL-Line search-comp} in Lemma \ref{lemma:d^k-bound-comp}, \eqref{x,y_range-comp} in Proposition \ref{Prop:hat_x,y}, and Proposition \ref{prop:Desc:Phi-comp}. With the help of these results, we then show the well-definedness of the proposed method in Proposition \ref{prop:eta_min,tau_min-comp}. Finally, we establish the iteration complexity and subsequential convergence in Theorems \ref{the:complexity-comp} and \ref{the:convger-comp} respectively.

Before formally proving the convergence results, we record several direct consequences of Assumption \ref{assp:N}. First, the eigenvalue bound in item (ii) of Assumption \ref{assp:N} yields that $\sqrt{C_1} \|\lambda\| \leq \| \nabla h(x)^{\top} \lambda \| \leq \sqrt{C_2} \|\lambda\|$, for all $x \in \mathcal{M}_{2\theta}$ and $\lambda \in \mathbb{R}^m$. It also implies that $h$ is $\sqrt{C_2}$-Lipschitz continuous over $\mathcal{M}_{2\theta}$ and $\|h(x)\| \leq 2\sqrt{C_2}\theta$ for all $x \in \mathcal{M}_{2\theta}$. Moreover, since $\mathcal{M}$ is compact, $\mathcal{M}_{2\theta}$ is also compact. Hence the continuity of $\nabla f$ and $\nabla \mathcal{A}$ implies there exist constants $\ell_f,~\ell_{\mathcal{A}} > 0$ such that $\|\nabla f(x)\| \leq \ell_f$ and $\|\nabla\mathcal{A}\| \leq \ell_{\mathcal{A}}$ for all $x \in \mathcal{M}_{2\theta}$. Consequently, $ f$ and $\mathcal{A}$ are Lipschitz continuous on $\mathcal{M}_{2\theta}$. Finally, item (iv) indicates that $\|z\| \leq \ell_g$ for all $z \in \partial g(y)$ and $y \in \mathbb{R}^p$. These estimates will be used repeatedly below without further mention. Now we begin our convergence analysis. The following lemma gives the bound estimate of the sequence $\{(d^k,v^k,\lambda^k,\mu^k) :  k\in\mathbb{N}\}$. 
\begin{lemma}\label{lemma:d^k-bound-comp}
    Let  $(\lambda^k,\mu^k)\in\mathbb{R}^m\times\mathbb{R}^p$ satisfy \eqref{inexact-solution-comp}. Let $d^k$ and $v^k$ be defined in \eqref{def:d^k-comp} and \eqref{def:v^k-comp} respectively. Assume that $x^k\in\mathcal{M}_{\theta}$. Then, the following inequalities hold: 
 \begingroup
\setlength{\abovedisplayskip}{3pt plus 1pt minus 1pt}
\setlength{\belowdisplayskip}{3pt plus 1pt minus 1pt}
\setlength{\abovedisplayshortskip}{0pt plus 1pt}
\setlength{\belowdisplayshortskip}{3pt plus 1pt minus 1pt}
   \begin{align*}
        \|d^k\|&\leq t_k(2\ell_f+\ell_g)+(\frac{(\ell_{\mathcal{A}}+1)}{\sqrt{C_1}}+1)\Delta_k, \tag{i}\label{ineq:d^k-bound-comp}
        \\
        \|v^k\|&\leq t_k(\ell_f+2\ell_g)+(\frac{(\ell_{\mathcal{A}}+1)}{\sqrt{C_1}}+1)\Delta_k, \tag{ii}\label{ineq:v^k-bound-comp}\\
        \|\lambda^k\|&\leq \frac{1}{\sqrt{C_1}}\big((3+\ell_{\mathcal{A}})\ell_f+(3\ell_{\mathcal{A}}+1)\ell_g+(\frac{(\ell_{\mathcal{A}}+1)}{\sqrt{C_1}}+1)\frac{(\ell_{\mathcal{A}}+1)\Delta_k}{t_k}\big),\tag{iii}\label{ineq:lambda^k-bound-comp}\\
         \|\mu^k\|&\leq \ell_f+3\ell_g+(\frac{(\ell_{\mathcal{A}}+1)}{\sqrt{C_1}}+1)\frac{\Delta_k}{t_k}.\tag{iv}\label{ineq:mu^k-bound-comp}
    \end{align*}  
    \endgroup
\end{lemma}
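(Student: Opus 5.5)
The plan is to compare the inexact dual pair $(\lambda^k,\mu^k)$ with the gradient residual $r:=\nabla G_k(\lambda^k,\mu^k)=(r_\lambda,r_\mu)$, where $\|r\|\le\Delta_k$. The key observation is that \eqref{GG-comp-0}, together with \eqref{def:d^k-comp} and \eqref{def:v^k-comp}, can be rewritten as
\begin{equation*}
r_\lambda=-\nabla h(x^k)d^k,\qquad r_\mu=-\nabla\mathcal{A}(x^k)d^k+v^k .
\end{equation*}
So $d^k$ nearly lies in $T_{x^k}\mathcal{M}^k$, and $v^k$ nearly equals $\nabla\mathcal{A}(x^k)d^k$. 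Since $x^k\in\mathcal{M}_\theta\subseteq\mathcal{M}_{2\theta}$, we may use the bounds $\|\nabla f(x^k)\|\le\ell_f$ and $\|\nabla\mathcal{A}(x^k)\|\le\ell_{\mathcal{A}}$, the eigenvalue bounds of $H(x^k)$, and $\|z\|\le\ell_g$ for subgradients of $g$.

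\emph{Bounding $d^k$.} Let $P:=I-\nabla h(x^k)^\top H(x^k)^{-1}\nabla h(x^k)$ be the orthogonal projector onto $T_{x^k}\mathcal{M}^k$. Split $d^k=Pd^k+(I-P)d^k$. The normal part satisfies $\|(I-P)d^k\|=\|\nabla h(x^k)^\top H^{-1}r_\lambda\|\le\|r_\lambda\|/\sqrt{C_1}$. For the tangential part, apply $P$ to \eqref{def:d^k-comp}; this kills the $\lambda^k$ term and gives $Pd^k=-t_kP(\nabla f(x^k)+\nabla\mathcal{A}(x^k)^\top\mu^k)$. To control $\nabla\mathcal{A}(x^k)^\top\mu^k$ I will use the prox optimality condition. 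With $u:=\mathrm{prox}_{t_kg}(\mathcal{A}(x^k)+t_k\mu^k)=\mathcal{A}(x^k)+v^k$, we have $\mu^k-v^k/t_k\in\partial g(u)$, so $\mu^k=v^k/t_k+z$ with $\|z\|\le\ell_g$. Substituting $v^k=r_\mu+\nabla\mathcal{A}(x^k)d^k$ would create a self-referential term in $d^k$, so instead I take inner products. Pair \eqref{def:d^k-comp} with $d^k$, use $\langle\nabla h^\top\lambda^k,d^k\rangle=-\langle\lambda^k,r_\lambda\rangle$ and $\langle\mu^k,\nabla\mathcal{A}d^k\rangle=\langle\mu^k,v^k-r_\mu\rangle$, and note that $\langle \mu^k,v^k\rangle=\|v^k\|^2/t_k+\langle z,v^k\rangle$. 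This yields
\begin{equation*}
\tfrac{1}{t_k}\|d^k\|^2+\tfrac1{t_k}\|v^k\|^2\le \ell_f\|d^k\|+\ell_g\|v^k\|+|\langle\lambda^k,r_\lambda\rangle|+|\langle\mu^k,r_\mu\rangle| .
\end{equation*}

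\emph{The obstacle.} The residual terms still involve $\lambda^k$ and $\mu^k$, and these are only bounded in terms of $d^k$ and $v^k$. This circularity is the main difficulty. The constants in the statement (e.g.\ $2\ell_f+\ell_g$, and a $\Delta_k$ coefficient with no $t_k$) suggest a cleaner route: compare with the exact solution. The exact dual optimum has zero residual, so the argument above gives $\|d^*\|^2+\|v^*\|^2\le t_k(\ell_f\|d^*\|+\ell_g\|v^*\|)$, hence $\|d^*\|,\|v^*\|\le t_k(\ell_f+\ell_g)$. I would then bound $\|d^k-d^*\|$ and $\|v^k-v^*\|$ by a multiple of $\Delta_k$ with coefficient $(\ell_{\mathcal{A}}+1)/\sqrt{C_1}+1$. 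If that perturbation step fails, I would revert to a direct decomposition: write $d^k$ via $P$, bound $\|P\nabla\mathcal{A}^\top\mu^k\|$ using $\mu^k=v^k/t_k+z$, and absorb the quadratic terms.

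\emph{Bounding $v^k$, $\mu^k$, $\lambda^k$.} Once $d^k$ is controlled, $v^k=r_\mu+\nabla\mathcal{A}(x^k)d^k$ gives a bound on $v^k$; I expect the sharper form in (ii) to follow from the same energy inequality. Then $\mu^k=v^k/t_k+z$ gives (iv), since $\|\mu^k\|\le\|v^k\|/t_k+\ell_g$, and this matches $\ell_f+3\ell_g+(\cdots)\Delta_k/t_k$. Finally, apply $\nabla h(x^k)$ to \eqref{def:d^k-comp} and solve for $\lambda^k$:
\begin{equation*}
\lambda^k=-H^{-1}\nabla h(x^k)\bigl(\nabla f(x^k)+\nabla\mathcal{A}(x^k)^\top\mu^k+d^k/t_k\bigr).
\end{equation*}
Since $\|H^{-1}\nabla h(x^k)\|\le1/\sqrt{C_1}$, inserting the bounds for $\mu^k$ and $d^k$ gives (iii). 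Its constants $(3+\ell_{\mathcal{A}})\ell_f+(3\ell_{\mathcal{A}}+1)\ell_g$ are consistent with $\ell_f+\ell_{\mathcal{A}}(\ell_f+3\ell_g)+(2\ell_f+\ell_g)$.
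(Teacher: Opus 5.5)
\textbf{Gap in (i)--(ii).} Your derivations of (iii) and (iv) from (i)--(ii) are correct and coincide with the paper's ($\|\mu^k\|\le \ell_g+\|v^k\|/t_k$ and $\sqrt{C_1}\|\lambda^k\|\le \ell_f+\|d^k\|/t_k+\ell_{\mathcal{A}}\|\mu^k\|$). The gap is in (i)--(ii), which are the substance of the lemma.

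Your main route rests on the claim $\|d^k-d^*\|,\|v^k-v^*\|\le\bigl(\frac{\ell_{\mathcal{A}}+1}{\sqrt{C_1}}+1\bigr)\Delta_k$. You give no argument for it, and it is false in general: when $g$ is nonsmooth, the minimizer of the strongly convex subproblem need not depend Lipschitz-continuously on the right-hand side of the linear constraints. A counterexample: take $n=p=2$, $m=1$, $h(x)=\|x\|^2-1$, $x^k=(0,1)$, $\mathcal{A}=I$, $f(x)=-\ell x_1$, and $g(y)=\ell\|y-(0,1)\|$. The unperturbed solution is $d^*=0$. Now take a dual pair whose induced $d^k,v^k$ satisfy $\nabla h(x^k)d^k=\epsilon$ and $v^k=d^k$. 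Such pairs exist: they are KKT multipliers of the perturbed problem described below. This pair has residual $\Delta_k=\epsilon$. It gives $d^k=(d_1,\epsilon/2)$, where $d_1$ minimizes $-\ell d_1+\ell\sqrt{d_1^2+\epsilon^2/4}+\frac{1}{t_k}d_1^2$. Hence $d_1\approx(t_k\ell\epsilon^2/16)^{1/3}\gg\epsilon$ for small $\epsilon$.

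Your fallback does not close either. From $\|Pd^k\|\le t_k(\ell_f+\ell_{\mathcal{A}}\ell_g)+\ell_{\mathcal{A}}\|v^k\|$ and $\|v^k\|\le\Delta_k+\ell_{\mathcal{A}}\|d^k\|$ you get $\ell_{\mathcal{A}}^2\|d^k\|$ on the right-hand side, which cannot be absorbed unless $\ell_{\mathcal{A}}<1$. Your energy inequality is in fact not circular: inserting the multiplier bounds gives a solvable quadratic inequality. However, it carries a term of order $t_k\Delta_k$ not multiplied by $\|d^k\|$ or $\|v^k\|$. That produces a $\sqrt{t_k\Delta_k}$ cross term, so it does not directly yield the stated constants.

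\textbf{The missing idea: compare objective values, not minimizers.} Set $e_d^k:=\nabla h(x^k)d^k=-r_\lambda$ and $e_v^k:=\nabla\mathcal{A}(x^k)d^k-v^k=-r_\mu$. The relations defining $d^k$ and $v^k$ are then exactly the KKT conditions, with multipliers $(\lambda^k,\mu^k)$, of the convex problem
\[
\min_{d,v}\ \langle\nabla f(x^k),d\rangle+g(\mathcal{A}(x^k)+v)+\tfrac{1}{2t_k}(\|d\|^2+\|v\|^2)\quad\text{s.t.}\quad \nabla h(x^k)d=e_d^k,\ \nabla\mathcal{A}(x^k)d-v=e_v^k .
\]
Hence $(d^k,v^k)$ is the exact minimizer of this problem. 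The point $\hat d^k:=\nabla h(x^k)^\top H(x^k)^{-1}e_d^k$, $\hat v^k:=\nabla\mathcal{A}(x^k)\hat d^k-e_v^k$ is feasible for the same problem, with $\|\hat d^k\|\le\Delta_k/\sqrt{C_1}$ and $\|\hat v^k\|\le(\ell_{\mathcal{A}}/\sqrt{C_1}+1)\Delta_k$. Compare the two objective values, using only $\|\nabla f(x^k)\|\le\ell_f$ and the $\ell_g$-Lipschitz continuity of $g$, and complete squares:
\[
(\|d^k\|-t_k\ell_f)^2+(\|v^k\|-t_k\ell_g)^2\le(\|\hat d^k\|+t_k\ell_f)^2+(\|\hat v^k\|+t_k\ell_g)^2 .
\]
This gives $\|d^k\|\le\|\hat d^k\|+\|\hat v^k\|+t_k(2\ell_f+\ell_g)$ and $\|v^k\|\le\|\hat d^k\|+\|\hat v^k\|+t_k(\ell_f+2\ell_g)$, which are (i) and (ii) with exactly the stated constants. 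The multipliers never enter, so the circularity you encountered does not arise, and no stability of the solution map is needed.
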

\begin{proof}
We first prove \eqref{ineq:d^k-bound-comp} and \eqref{ineq:v^k-bound-comp}.
Let \[e_{d}^k:=\nabla h(x^k)d^k,~~e^k_{v}:=\nabla \mathcal{A}(x^k)d^k-v^k.\] We derive from \eqref{GG-comp-0}, \eqref{inexact-solution-comp}, and the definitions of $d^k$ in \eqref{def:d^k-comp} and $v^k$ in \eqref{def:v^k-comp} that
\begin{equation}\label{ineq-res-lem}
   \|e_{d}^k\|\leq \Delta_k,  ~~\|e^k_{v}\|\leq \Delta_k.
\end{equation}
Furthermore, due to the convexity of $g$,  it holds that
\begin{equation}\label{prf:d^k,v^k}
 \begin{aligned}
        (d^{k},v^k)=&\arg\min \langle \nabla f(x^k),d\rangle+g(\mathcal{A}(x^k)+v)+\frac{1}{2t_k}\|d\|^2+\frac{1}{2t_k}\|v\|^2.\\
        &\text{ s.t. } \nabla h(x^k)d=e_d^k,~~\nabla\mathcal{A}(x^k)d-v=e^k_v.
    \end{aligned}
    \end{equation}
Let \[\hat{d}^k=\nabla h(x^k)^{\top}H(x^k)^{-1}e^k_d,~~\hat{v}^k=\nabla\mathcal{A}(x^k)\hat{d}^k-e^k_v. \]
Then,   \[\nabla h(x^k)\hat{d}^k=\nabla h(x^k)d^k,~~\nabla\mathcal{A}(x^k)\hat{d}^k-\hat{v}^k=e^k_v.\] 
It follows from \eqref{prf:d^k,v^k} that
\begin{equation*}
    \begin{aligned}
       &g(\mathcal{A}(x^k)+{v}^k)+\langle \nabla f(x^k), {d}^k\rangle+\frac{1}{2t_k}\|{d}^k\|^2+\frac{1}{2t_k}\|{v}^k\|^2\\
       \leq& g(\mathcal{A}(x^k)+\hat{v}^k)+\langle \nabla f(x^k), \hat{d}^k\rangle+\frac{1}{2t_k}\|\hat{d}^k\|^2+\frac{1}{2t_k}\|\hat{v}^k\|^2.
    \end{aligned}
\end{equation*}

  Combining the Lipschitz continuity of $f$ and $g$, this leads to  
\begin{equation}
    \frac{1}{2t_k}(\|{d}^k\|^2+\|v^k\|^2)\leq \ell_f(\|d^k\|+\|\hat{d}^k\|)+\ell_g(\|v^k\|+\|\hat{v}^k\|)+\frac{1}{2t_k}(\|\hat{d}^k\|^2+\|\hat{v}^k\|^2),
\end{equation}
which is equivalent to 
\begin{equation}
    (\|{d}^k\|-t_k\ell_f)^2+(\|v^k\|-t_k\ell_g)^2\leq (\|\hat{d}^k\|+t_k\ell_f)^2+(\|\hat{v}^k\|+ t_k\ell_g)^2.
\end{equation}
This implies that
\begin{equation}\label{prf:dv-1}
\begin{aligned}
    \|d^k\|\leq \|\hat{d}^k\|+\|\hat{v}^k\|+t_k(2\ell_f+\ell_g),\\
    \|v^k\|\leq \|\hat{d}^k\|+\|\hat{v}^k\|+t_k(\ell_f+2\ell_g).
    \end{aligned}
\end{equation}
From Assumption \ref{assp:N} (ii) and \eqref{ineq-res-lem}, we deduce that
\begin{equation}
\begin{aligned}
    \|\hat{d}^k\|&=\|\nabla h(x^k)^{\top}H(x^k)^{-1}e_d^k\|\leq\|\nabla h(x^k)^{\top}H(x^k)^{-1}\|\|e_d^k\|\leq\frac{1}{\sqrt{C_1}}\Delta_k,\\
    \|\hat{v}^k\|&=\|\nabla\mathcal{A}(x^k)\hat{d}^k-e^k_v\|\leq\ell_{\mathcal{A}}\|\hat{d}^k\|+\|e^k_v\|\leq (\frac{\ell_{\mathcal{A}}}{\sqrt{C_1}}+1)\Delta_k,
    \end{aligned}
\end{equation}
which together with \eqref{prf:dv-1}  implies \eqref{ineq:d^k-bound-comp} and
\eqref{ineq:v^k-bound-comp}.

Next, we prove \eqref{ineq:lambda^k-bound-comp} and \eqref{ineq:mu^k-bound-comp}. We derive from \eqref{def:d^k-comp} and \eqref{def:v^k-comp} that 
    \begin{equation}\label{eq-optimal-d,v}
    \begin{aligned}
        0&= \nabla f(x^k)+\frac{1}{t_k}d^k+\nabla h(x^k)^{\top}\lambda^k+\nabla\mathcal{A}(x^k)^{\top}\mu^k,\\
        0&\in \partial g\big(\mathcal{A}(x^k)+v^k\big)+\frac{1}{t_k}v^k-\mu^k.
        \end{aligned}
    \end{equation}
 Consequently, we obtain 
   \begin{equation}\label{ineq-mu-prf}
   \|\mu^k\|\leq \ell_g+\frac{\|v^k\|}{t_k}\leq \ell_f+3\ell_g+(\frac{(\ell_{\mathcal{A}}+1)}{\sqrt{C_1}}+1)\frac{\Delta_k}{t_k},
\end{equation}
where the first inequality and the second inequality comes from \eqref{ineq:v^k-bound-comp}. This implies that \eqref{ineq:mu^k-bound-comp} holds.
We also obtain
   \begin{equation}
       \sqrt{C_1}\|\lambda^k\|\leq\|\nabla h(x^k)^{\top}\lambda^k\|\leq \ell_f+\frac{\|d^k\|}{t_k}+\ell_{\mathcal{A}}\|\mu^k\|,\label{ineq:lambda-comp}
   \end{equation} 
 where the first inequality comes from Assumption \ref{assp:N} (ii) and the second inequality comes from the Lipschitz continuity of $f$ and $\mathcal{A}$ and \eqref{eq-optimal-d,v}.
Then, \eqref{ineq:lambda^k-bound-comp} follows immediately from \eqref{ineq-mu-prf}, \eqref{ineq:lambda-comp} and \eqref{ineq:d^k-bound-comp}.
\end{proof}

Based on lemma \ref{lemma:d^k-bound-comp}, we have some further results on the iterates $y^k$ and $\hat{x}^{k+1}$.
\begin{proposition}\label{Prop:hat_x,y}
     Let  $(\lambda^k,\mu^k)\in\mathbb{R}^m\times\mathbb{R}^p$ satisfy \eqref{inexact-solution-comp}. Let $d^k$, $v^k$, $y^k$ and $\hat{x}^{k+1}$ be defined by \eqref{def:d^k-comp}, \eqref{def:v^k-comp}, \eqref{step2} and \eqref{update-x} respectively. Assume that $x^k\in\mathcal{M}_{\theta}$, $\eta_k$ and $\tau_k$ satisfy
     \begin{equation}\label{ub:eta,tau-comp}
         \begin{aligned}
        &{\eta}_k\leq \hat{\eta}_1(t_k,\Delta_k):=\min\left\{\frac{\theta}{t_k(2\ell_f+\ell_g)+\left(\frac{(\ell_{\mathcal{A}}+1)}{\sqrt{C_1}}+1\right){\Delta}_{k}},1\right\},\\
        &{\tau}_k\leq \hat{\tau}:=\min\left\{\frac{2\kappa C_1}{\theta L_h C_2},\frac{1}{C_2}\right\}.
    \end{aligned} 
     \end{equation}  
   Then, the following statements hold:  
   \begingroup
\setlength{\abovedisplayskip}{3pt plus 1pt minus 1pt}
\setlength{\belowdisplayskip}{3pt plus 1pt minus 1pt}
\setlength{\abovedisplayshortskip}{0pt plus 1pt}
\setlength{\belowdisplayshortskip}{3pt plus 1pt minus 1pt}
   \begin{align*}
    &y^k\in \mathcal{M}_{2\theta},~\|h(\hat{x}^{k+1})\|\leq\frac{\theta}{\kappa} \text{ and } \hat{x}^{k+1}\in \mathcal{M}_{\theta}.\tag{i}\label{x,y_range-comp}\\
      \tag{ii} \label{ineq:desc:F(x)-comp}    
        &F(\hat{x}^{k+1})-F(x^k)-(\ell_f+\ell_g\ell_{\mathcal{A}})\|\hat{x}^{k+1}-y^k\|\\
        \leq& -(\frac{1}{t_{k}\eta_{k}}-L_f-{\ell_gL_{\mathcal{A}}})\frac{\eta_{k}^2}{2}\|d^{k}\|^2-\frac{\eta_k}{2t_k}\|v^k\|^2+\eta_{k}(\ell_g+\|\lambda^{k}\|+\|\mu^{k}\|)\Delta_{k}.  \\ 
        &\|h(\hat{x}^{k+1})\|-(\eta_{k}\Delta_{k}+\frac{L_h}{2}\eta_{k}^2\|d^{k}\|^2)\tag{iii}\label{ineq:Desc:h(x)}\\
        \leq & \begin{cases}   \|h(x^k)\|-(C_1\tau_k\|h(y^k)\|-\frac{L_hC_2}{2}\tau_k^2\|h(y^k)\|^2),& \text{ if }\|h(y^k)\|\leq \frac{\theta}{\kappa},\\  
            \|h(x^k)\|-\|h(y^k)\|,&\text{otherwise}.
        \end{cases}
    \end{align*}
  \endgroup 
\end{proposition}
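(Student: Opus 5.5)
We prove (i)--(iii) in order, relying on Lemma~\ref{lemma:d^k-bound-comp} and the consequences of Assumption~\ref{assp:N} listed before it.

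\emph{Proof of (i).} Since $x^k\in\mathcal{M}_\theta$, Lemma~\ref{lemma:d^k-bound-comp}(i) together with $\eta_k\le\hat\eta_1(t_k,\Delta_k)$ gives $\eta_k\|d^k\|\le\theta$. Hence $\mathrm{dist}(y^k,\mathcal{M})\le \mathrm{dist}(x^k,\mathcal{M})+\eta_k\|d^k\|\le 2\theta$, so $y^k\in\mathcal{M}_{2\theta}$. For $\hat x^{k+1}$ we distinguish the two branches of \eqref{update-x}.
\begin{itemize}
\item In the projection branch, $h(\hat x^{k+1})=0$, so both claims are trivial.
\item In the correction branch, $\|h(y^k)\|\le\theta/\kappa$ and $\hat x^{k+1}=y^k-\tau_k\nabla h(y^k)^\top h(y^k)$. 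We Taylor-expand $h$ with the $L_h$-Lipschitz Jacobian:
\[
h(\hat x^{k+1})=(I-\tau_k H(y^k))h(y^k)+r,\qquad \|r\|\le \tfrac{L_h}{2}\tau_k^2\|\nabla h(y^k)^\top h(y^k)\|^2\le \tfrac{L_hC_2}{2}\tau_k^2\|h(y^k)\|^2.
\]
Because $\tau_k\le 1/C_2$ and $y^k\in\mathcal{M}_{2\theta}$, the eigenvalues of $I-\tau_kH(y^k)$ lie in $[0,1-\tau_kC_1]$. Therefore
\[
\|h(\hat x^{k+1})\|\le \|h(y^k)\|-\Big(C_1\tau_k\|h(y^k)\|-\tfrac{L_hC_2}{2}\tau_k^2\|h(y^k)\|^2\Big).
\]
Using $\|h(y^k)\|\le\theta/\kappa$ and $\tau_k\le 2\kappa C_1/(\theta L_hC_2)$, we get $\tfrac{L_hC_2}{2}\tau_k\|h(y^k)\|\le C_1$, so the bracket is nonnegative. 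This yields $\|h(\hat x^{k+1})\|\le\|h(y^k)\|\le\theta/\kappa$, and Assumption~\ref{assp:N}(i) then gives $\hat x^{k+1}\in\mathcal{M}_\theta$.
\end{itemize}

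\emph{Proof of (iii).} Combine the above estimate, in terms of $\|h(y^k)\|$, with a bound of $\|h(y^k)\|$ in terms of $\|h(x^k)\|$. Taylor expansion gives
\[
h(y^k)=h(x^k)+\eta_k\nabla h(x^k)d^k+r',\qquad \|r'\|\le \tfrac{L_h}{2}\eta_k^2\|d^k\|^2.
\]
By \eqref{ineq-res-lem}, $\|\nabla h(x^k)d^k\|\le\Delta_k$, so
\[
\|h(y^k)\|\le\|h(x^k)\|+\eta_k\Delta_k+\tfrac{L_h}{2}\eta_k^2\|d^k\|^2.
\]
Substituting this into the correction-branch estimate gives the first case of (iii). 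In the projection branch, $\|h(\hat x^{k+1})\|=0=\|h(y^k)\|-\|h(y^k)\|$, and the same bound on $\|h(y^k)\|$ gives the second case.

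\emph{Proof of (ii).} Write
\[
F(\hat x^{k+1})-F(x^k)=\big[F(\hat x^{k+1})-F(y^k)\big]+\big[F(y^k)-F(x^k)\big].
\]
The first bracket is at most $(\ell_f+\ell_g\ell_{\mathcal{A}})\|\hat x^{k+1}-y^k\|$, because $F$ is Lipschitz on $\mathcal{M}_{2\theta}$. This needs the segment $[y^k,\hat x^{k+1}]$ to lie in $\mathcal{M}_{2\theta}$, or a mean-value argument: in the projection case, points of the segment are within $\mathrm{dist}(y^k,\mathcal{M})$ of $\mathcal{M}$, and in the correction case $\tau_k\|\nabla N(y^k)\|$ is small. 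We check this as a minor detail.

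For the second bracket, use the descent lemma for $f$ and the composite linearization bound $g(\mathcal{A}(y^k))\le g(\mathcal{A}(x^k)+\eta_k\nabla\mathcal{A}(x^k)d^k)+\ell_g\tfrac{L_{\mathcal{A}}}{2}\eta_k^2\|d^k\|^2$. We then need
\[
\eta_k\langle\nabla f(x^k),d^k\rangle+g(\mathcal{A}(x^k)+\eta_k\nabla\mathcal{A}(x^k)d^k)-g(\mathcal{A}(x^k))\le -\tfrac{\eta_k}{t_k}\|d^k\|^2-\tfrac{\eta_k}{2t_k}\|v^k\|^2+\eta_k(\ell_g+\|\lambda^k\|+\|\mu^k\|)\Delta_k.
\]
To obtain it:
\begin{itemize}
\item Replace $\nabla\mathcal{A}(x^k)d^k$ by $v^k+e_v^k$; the error costs at most $\eta_k\ell_g\Delta_k$.
\item Use convexity: $g(\mathcal{A}(x^k)+\eta_kv^k)\le(1-\eta_k)g(\mathcal{A}(x^k))+\eta_k g(\mathcal{A}(x^k)+v^k)$, valid since $\eta_k\le1$.
\item Apply the subgradient inequality at $\mathcal{A}(x^k)+v^k$ with $\mu^k-v^k/t_k\in\partial g$ from \eqref{eq-optimal-d,v}: $g(\mathcal{A}(x^k)+v^k)-g(\mathcal{A}(x^k))\le\langle\mu^k-v^k/t_k,v^k\rangle$.
\item Substitute $\nabla f(x^k)=-d^k/t_k-\nabla h(x^k)^\top\lambda^k-\nabla\mathcal{A}(x^k)^\top\mu^k$. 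Collecting terms, the residuals $\langle\lambda^k,e_d^k\rangle$ and $\langle\mu^k,e_v^k\rangle$ give the $(\|\lambda^k\|+\|\mu^k\|)\Delta_k$ term, and $-\tfrac{\eta_k}{t_k}(\|d^k\|^2+\|v^k\|^2)$ remains, which is stronger than needed.
\end{itemize}

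The main obstacle is this bookkeeping in (ii): the signs and the inexactness residuals $e_d^k,e_v^k$ must combine exactly into the stated constants.
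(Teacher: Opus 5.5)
Your proposal is correct and follows the paper's proof essentially step by step. For (i) and (iii) you use the same distance and Taylor estimates. For (ii) you use the same combination: the descent lemma, the linearization bound for $g\circ\mathcal{A}$, convexity of $g$ with $\eta_k\le 1$, and the Lipschitz bound on $F(\hat x^{k+1})-F(y^k)$. Your only change is to use the subgradient $\mu^k-v^k/t_k\in\partial g(\mathcal{A}(x^k)+v^k)$ in place of the paper's value comparison \eqref{ineq:lem1-prox-comp}, which gives the slightly sharper coefficient $-\eta_k/t_k$.

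The segment detail you deferred, which the paper does not address, closes easily in the correction branch. Apply your estimate from (i) with $s\tau_k$ in place of $\tau_k$ for $s\in[0,1]$. This gives $\|h(y^k-s\tau_k\nabla N(y^k))\|\le\|h(y^k)\|\le\theta/\kappa$, so the whole segment lies in $\mathcal{M}_\theta$.
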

\begin{proof}
   We first prove \eqref{x,y_range-comp}. By \eqref{step2}, we have 
    \[\mathrm{dist}(y^k,\mathcal{M})\leq \mathrm{dist}(x^k,\mathcal{M})+\eta_k\|d^k\|\leq 2\theta,\]
    where the second inequality follows from Lemma \ref{lemma:d^k-bound-comp} \eqref{ineq:d^k-bound-comp} and the upper bound of $\eta_k$.
    We next show that $\|h(\hat{x}^{k+1})\|\leq\frac{\theta}{\kappa}$ and $\hat{x}^{k+1}\in\mathcal{M}_{\theta}$. For the case that $\|h(y^k)\|> \frac{\theta}{\kappa}$, it is obvious that $\|h(\hat{x}^{k+1})\|\leq\frac{\theta}{\kappa}$ and $\hat{x}^{k+1}\in\mathcal{M}_{\theta}$, since $\hat{x}^{k+1}=\mathrm{Proj}_{\mathcal{M}}(y^k)\in\mathcal{M}$.  Now, we focus on the case where $\|h(y^k)\|\leq \frac{\theta}{\kappa}$. 
    In view of Assumption \ref{assp:N} (i) and  (ii), we obtain
    \begin{equation}\label{ineq-lemma-h(x)}
        \begin{aligned}
        \|h(\hat{x}^{k+1})\|&\leq \|h(y^k)+\nabla h(y^k)(\hat{x}^{k+1}-y^k)\|+\frac{L_h}{2}\tau_k^2\|\nabla N(y^k)\|^2\\
        &\leq \|h(y^k)\|-\tau_kC_1\|h(y^k)\|+\frac{L_hC_2}{2}\tau_k^2\|h(y^k)\|^2 \\
        &\leq \|h(y^k)\|\leq\frac{\theta}{\kappa},
    \end{aligned}
    \end{equation}
    where the second inequality comes from the upper bound of $\tau_k$, and the third inequality comes from the upper bound of $\|h(y^k)\|$ and $\tau_k$.  This and Assumption \ref{assp:N} (i) yields that $\hat{x}^{k+1}\in\mathcal{M}_{\theta}$. 
  
Next, we prove \eqref{ineq:desc:F(x)-comp}. It follows from \eqref{def:d^k-comp} and \eqref{def:v^k-comp} that  
\begin{equation}\label{ineq:lem1-prox-comp}
    \begin{aligned}
&g(\mathcal{A}(x^{k})+v^{k})+\langle \nabla f(x^{k}),d^{k}\rangle+\frac{1}{2t_k}\|d^{k}\|^2+\frac{1}{2t_k}\|v^{k}\|^2\\
       \leq& g\big(\mathcal{A}(x^{k})\big)-\langle \lambda^{k}, \nabla h(x^{k})d^{k}\rangle-\langle\mu^{k},\nabla\mathcal{A}(x^{k})d^{k}-v^{k}\rangle.
    \end{aligned}
\end{equation}
We also derive from Assumption \ref{assp:N} (iv) and (v) that
  \begin{equation}\label{ineq:lem1-g-comp}
      \begin{aligned}
        g(\mathcal{A}(x^{k}+\eta_{k} d^{k}))\leq& g(\mathcal{A}(x^{k})+\eta_{k}\nabla\mathcal{A}(x^{k}) d^{k})\\&+\ell_g\|\mathcal{A}(x^{k}+\eta_{k} d^{k})-\mathcal{A}(x^{k})-\eta_{k}\nabla\mathcal{A}(x^{k}) d^{k}\|\\
        \leq&g(\mathcal{A}(x^{k})+\eta_{k}v^{k})+\eta_{k}\ell_g\|\nabla\mathcal{A}(x^{k})d^{k}-v^{k}\|+\frac{\ell_gL_{\mathcal{A}}}{2}\eta_{k}^2\|d^{k}\|^2\\
        \leq&(1-\eta_{k})g\big(\mathcal{A}(x^{k})\big)+\eta_{k}g(\mathcal{A}(x^{k})+v^{k})\\&+\eta_{k}\ell_g\|\nabla\mathcal{A}(x^{k})d^{k}-v^{k}\|+\frac{\ell_gL_{\mathcal{A}}}{2}\eta_{k}^2\|d^{k}\|^2,
    \end{aligned}
  \end{equation}
  the last inequality comes from the convexity of $g$.
It follows from Assumption \ref{assp:N} (iii)  that \begin{equation}\label{ineq:lem1-f-comp}
\begin{aligned}
        f(y^{k})\leq f(x^{k})+\langle \nabla f(x^{k}), y^{k}-x^{k}\rangle +\frac{L_f}{2}\|y^{k}-x^{k}\|^2.
        \end{aligned}
    \end{equation}
    Summing \eqref{ineq:lem1-prox-comp} multiplied by $\eta_k$  with \eqref{ineq:lem1-g-comp} and \eqref{ineq:lem1-f-comp}, and  by \eqref{ineq-res-lem}, we deduce that
    \begin{equation}\label{prf-F(y),F(x)-comp}
    \begin{aligned}
        F(y^{k})\leq &F(x^{k})-(\frac{1}{t_{k}\eta_{k}}-L_f-{\ell_gL_{\mathcal{A}}})\frac{\eta_{k}^2}{2}\|d^{k}\|^2-\frac{\eta_k}{2t_k}\|v^k\|^2+\eta_{k}(\ell_g+\|\lambda^{k}\|+\|\mu^{k}\|)\Delta_{k}.
         \end{aligned}
    \end{equation}
    As $F$ is $(\ell_f+\ell_g\ell_{\mathcal{A}})$-Lipschitz continuous,
    \[F(\hat{x}^{k+1})\leq F(y^k)+(\ell_f+\ell_g\ell_{\mathcal{A}})\|\hat{x}^{k+1}-y^k\|.\]
    This together with \eqref{prf-F(y),F(x)-comp} implies \eqref{ineq:desc:F(x)-comp}.
    
    Finally, we prove \eqref{ineq:Desc:h(x)}. From Assumption \ref{assp:N} (ii) and \eqref{ineq-res-lem},   we have 
    \begin{equation}\label{ineq:lemma-h(x)-1}
         \begin{aligned}
       \|h(y^k)\|&\leq \|h(x^k)+\eta_k\nabla h(x^k)d^k\|+\frac{L_h}{2}\eta_k^2\|d^k\|^2\\&\leq\|h(x^k)\|+\eta_k\Delta_k+\frac{L_h}{2}\eta_k^2\|d^k\|^2.
    \end{aligned} 
    \end{equation}
    For the case   $\|h(y^k)\|>\frac{\theta}{\kappa}$, item \eqref{ineq:Desc:h(x)} holds by \eqref{ineq:lemma-h(x)-1} and the fact that $\|h(\hat{x}^{k+1})\|=0$.
   On the other hand, when $\|h(y^k)\|\leq \frac{\theta}{\kappa}$,  we obtain 
   \begin{equation}\label{ineq:h(x)-h(y)}
       \begin{aligned}
        \|h(\hat{x}^{k+1})\|&\leq \|h(y^k)+\nabla h(y^k)(\hat{x}^{k+1}-y^k)\|+\frac{L_h}{2}\|\hat{x}^{k+1}-y^k\|^2\\
        &\leq (1-\tau_kC_1)\|h(y^k)\|+\frac{L_hC_2}{2}\tau_k^2\|h(y^k)\|^2,
    \end{aligned}
   \end{equation}
where the inequalities comes from Assumption \ref{assp:N} (ii). This and \eqref{ineq:lemma-h(x)-1} lead to item \eqref{ineq:Desc:h(x)}. 
   We complete the proof.
\end{proof}
Analogous to the safeguard mechanism developed in \citet{ablin2024infeasible}, Proposition~\ref{Prop:hat_x,y} provides explicit step-size bounds $\hat{\eta}_1$ and $\hat{\tau}$ that guarantee the trial iterate $\hat{x}^{k+1}$ remains within the feasible region $\mathcal{M}_{\theta}$. This indicates that a line-search procedure can be directly integrated into our algorithm to adaptively select step sizes ensuring all iterates stay in $\mathcal{M}_{\theta}$ without imposing restrictive a priori step-size choices.

With the help of Proposition \ref{Prop:hat_x,y}, we obtain the next proposition regarding the merit function $\Phi_{\alpha}$.
\begin{proposition}\label{prop:Desc:Phi-comp}
     Let  $(\lambda^k,\mu^k)\in\mathbb{R}^m\times\mathbb{R}^p$ satisfy \eqref{inexact-solution-comp}. Let $d^k$, $v^k$, $y^k$, and $\hat{x}^{k+1}$ be defined by \eqref{def:d^k-comp}, \eqref{def:v^k-comp}, \eqref{step2}, and \eqref{update-x} respectively. Given $\sigma>0$ and 
     \begin{equation}\label{def-alpha}
         \alpha >\max\left\{M_1/C_1,M_2+\frac{\sigma\hat{\tau}^2}{2}\right\}
     \end{equation}
      with  $M_1:=\sqrt{C_2}(\ell_f+\ell_g\ell_{\mathcal{A}})$ and $M_2:=\kappa(\ell_f+\ell_g\ell_{\mathcal{A}})$.
     Assume that $x^k\in\mathcal{M}_{\theta}$,  $\eta_k$ and $\tau_k$ satisfy
     \begin{equation}\label{ub:eta,tau-comp-2}
         \begin{aligned}   \eta_k&\leq \hat{\eta}_2(t_k,\Delta_k):=\min\left\{\hat{\eta}_1(t_k,\Delta_k), \frac{1}{t_k(L_f+\ell_gL_{\mathcal{A}}+\alpha L_h+\sigma)}\right\},\\
         {\tau}_k&\leq \hat{\tau}_2:= \min\left\{\hat{\tau},\frac{2(\alpha C_1-M_1)}{\alpha L_hC_2b+\sigma}\right\} \text{ with }b=\frac{\theta}{\kappa}.
     \end{aligned}
     \end{equation}
     
       Then,  it holds that:
  \begin{equation}\label{Desc:Phi-comp}
      \begin{aligned}
         {\Phi}_{\alpha}(\hat{x}^{k+1})\leq &{\Phi}_{\alpha}(x^k)-\frac{\sigma}{2}(\eta_{k}^2\|d^{k}\|^2+\tau_k^2\|h(y^k)\|)\\
         &-\frac{\eta_k}{2t_k}\|v^k\|^2+\eta_{k}(\alpha+\ell_g+\|\lambda^{k}\|+\|\mu^{k}\|)\Delta_{k}.
    \end{aligned}
  \end{equation}  
\end{proposition}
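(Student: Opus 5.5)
Since $\hat\eta_2\le\hat\eta_1$ and $\hat\tau_2\le\hat\tau$, Proposition~\ref{Prop:hat_x,y} applies. I add item~\eqref{ineq:desc:F(x)-comp} to $\alpha$ times item~\eqref{ineq:Desc:h(x)}. Writing $\Phi_\alpha=F+\alpha\|h\|$, this gives
\[
\Phi_\alpha(\hat x^{k+1})\le \Phi_\alpha(x^k)+(\ell_f+\ell_g\ell_{\mathcal A})\|\hat x^{k+1}-y^k\|-\Big(\tfrac{1}{t_k\eta_k}-L_f-\ell_gL_{\mathcal A}-\alpha L_h\Big)\tfrac{\eta_k^2}{2}\|d^k\|^2-\tfrac{\eta_k}{2t_k}\|v^k\|^2+\eta_k(\alpha+\ell_g+\|\lambda^k\|+\|\mu^k\|)\Delta_k-\alpha R_k ,
\]
where $R_k$ is the feasibility decrease from \eqref{ineq:Desc:h(x)} in the relevant case. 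The bound $\eta_k\le 1/(t_k(L_f+\ell_gL_{\mathcal A}+\alpha L_h+\sigma))$ gives $\tfrac{1}{t_k\eta_k}-L_f-\ell_gL_{\mathcal A}-\alpha L_h\ge\sigma$. Hence the $d^k$ term is at most $-\tfrac{\sigma}{2}\eta_k^2\|d^k\|^2$, and the $v^k$ and $\Delta_k$ terms already match \eqref{Desc:Phi-comp}. It remains to show
\[
(\ell_f+\ell_g\ell_{\mathcal A})\|\hat x^{k+1}-y^k\|-\alpha R_k\le -\tfrac{\sigma}{2}\tau_k^2\|h(y^k)\| .
\]
I split into the two cases of \eqref{update-x}.

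\emph{Case $\|h(y^k)\|\le\theta/\kappa$.} Here $\hat x^{k+1}-y^k=-\tau_k\nabla h(y^k)^\top h(y^k)$. Since $y^k\in\mathcal M_{2\theta}$, this gives $\|\hat x^{k+1}-y^k\|\le\tau_k\sqrt{C_2}\|h(y^k)\|$, so the $F$-error is at most $M_1\tau_k\|h(y^k)\|$. Also
\[
R_k=C_1\tau_k\|h(y^k)\|-\tfrac{L_hC_2}{2}\tau_k^2\|h(y^k)\|^2\ge C_1\tau_k\|h(y^k)\|-\tfrac{L_hC_2 b}{2}\tau_k^2\|h(y^k)\|,
\]
using $\|h(y^k)\|\le b$. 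So the total is at most
\[
\tau_k\|h(y^k)\|\Big(M_1-\alpha C_1+\tfrac{\alpha L_hC_2b}{2}\tau_k\Big).
\]
I need this to be $\le-\tfrac{\sigma}{2}\tau_k^2\|h(y^k)\|$, i.e. $\tau_k(\alpha L_hC_2b+\sigma)\le 2(\alpha C_1-M_1)$. This is exactly the second bound in $\hat\tau_2$, and it is meaningful because $\alpha>M_1/C_1$.

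\emph{Case $\|h(y^k)\|>\theta/\kappa$.} Here $\hat x^{k+1}=\mathrm{Proj}_{\mathcal M}(y^k)$ and $R_k=\|h(y^k)\|$. By Assumption~\ref{assp:N}(i), $\|\hat x^{k+1}-y^k\|=\mathrm{dist}(y^k,\mathcal M)\le\kappa\|h(y^k)\|$. The $F$-error is therefore at most $M_2\|h(y^k)\|$, and the total is at most $(M_2-\alpha)\|h(y^k)\|$. Since $\tau_k\le\hat\tau$ and $\alpha>M_2+\sigma\hat\tau^2/2$, we get $(M_2-\alpha)\|h(y^k)\|\le-\tfrac{\sigma}{2}\hat\tau^2\|h(y^k)\|\le-\tfrac{\sigma}{2}\tau_k^2\|h(y^k)\|$.

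\emph{Main obstacle.} The $F$ Lipschitz bound is applied to the points $\hat x^{k+1},y^k$, which lie in $\mathcal M_{2\theta}$ by item~\eqref{x,y_range-comp}. One must check that $F$'s Lipschitz constant $\ell_f+\ell_g\ell_{\mathcal A}$ is valid on the segment joining them. Proposition~\ref{Prop:hat_x,y} already used this bound, so I take it as given. The other thing to watch is correctly absorbing the linearization error $\eta_k\Delta_k+\tfrac{L_h}{2}\eta_k^2\|d^k\|^2$: the first part goes into the $\alpha\Delta_k\eta_k$ term and the second into the $\alpha L_h$ part of the $\eta_k$ bound.
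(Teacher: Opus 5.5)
Your proposal is correct and follows the paper's proof essentially step for step. You add item (ii) to $\alpha$ times item (iii) of Proposition~\ref{Prop:hat_x,y} and use the $\eta_k$ bound to get the coefficient $\sigma$ on $\eta_k^2\|d^k\|^2$. You then split into the two cases of \eqref{update-x}: in the correction case, $\|\hat x^{k+1}-y^k\|\le\tau_k\sqrt{C_2}\|h(y^k)\|$ together with the second bound in $\hat\tau_2$; in the projection case, $\|\hat x^{k+1}-y^k\|\le\kappa\|h(y^k)\|$ together with $\alpha>M_2+\sigma\hat\tau^2/2$ and $\tau_k\le\hat\tau$. This is exactly what the paper does. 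The Lipschitz-constant caveat you flag is inherited from item (ii) of Proposition~\ref{Prop:hat_x,y}, which the paper likewise takes as given.
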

\begin{proof}
     For the case $\|h(y^k)\|\leq b$, 
     \begin{equation}\label{prf-(x-y)}
        \|\hat{x}^{k+1}-y^k\|=\tau_k\|\nabla h(y^k)^{\top} h(y^k)\|\leq \tau_k\sqrt{C_2}\|h(y^k)\|, 
     \end{equation}  
     which comes from Assumption \ref{assp:N} (ii).
     Combing this with items \eqref{ineq:desc:F(x)-comp} and \eqref{ineq:Desc:h(x)} multiplied by $\alpha$ in Proposition \ref{Prop:hat_x,y},
    we have 
    \begin{equation}\label{prf:phi(x)-1}
        \begin{aligned}
        \Phi_{\alpha}(\hat{x}^{k+1})\leq& \Phi_{\alpha}(x^k)-(\frac{1}{t_k\eta_k}-L_f-\ell_gL_{\mathcal{A}}-\alpha L_h)\frac{\eta_k^2}{2}\|d^k\|^2-\frac{\eta_k}{2t_k}\|v^k\|^2\\
        &-(\frac{2(\alpha C_1-M_1)}{\tau_k}-\alpha  L_hC_2 b)\frac{\tau_k^2}{2}\|h(y^k)\|+\eta_k(\alpha+\ell_g+\|\lambda^k\|+\|\mu^k\|)\Delta_k.
    \end{aligned}
    \end{equation}
    
    On the other hand,  when $\|h(y^k)\|>b$, $\hat{x}^{k+1}=\mathrm{Proj}_{\mathcal{M}}(y^k)$ and $\|h(\hat{x}^{k+1})\|=0$. By Assumption \ref{assp:N} (i), we know that \begin{equation}\label{prf-(x-y)-2}
        \|\hat{x}^{k+1}-y^k\|\leq\kappa\|h(y^k)\|
    \end{equation}
Combining this with the range of $\alpha$ and $\tau_k$ and the items \eqref{ineq:desc:F(x)-comp}, \eqref{ineq:Desc:h(x)} in Proposition \ref{Prop:hat_x,y}, we obtain 
\begin{equation}\label{prf:phi(x)-2}
    \begin{aligned}
        \Phi_{\alpha}(\hat{x}^{k+1})-\Phi_{\alpha}(x^k)\leq &-(\frac{1}{t_k\eta_k}-L_f-\ell_gL_{\mathcal{A}}-\alpha L_h)\frac{\eta_k^2}{2}\|d^k\|^2-\frac{\eta_k}{2t_k}\|v^k\|^2\\    &-\frac{\sigma{\tau}_k^2}{2}\|h(y^k)\|+\eta_k(\alpha+\ell_g+\|\lambda^k\|+\|\mu^k\|)\Delta_k.
    \end{aligned}
\end{equation}
 
 The inequality \eqref{Desc:Phi-comp} follows immediately from the two cases with inequalities \eqref{prf:phi(x)-1} and \eqref{prf:phi(x)-2} respectively.
\end{proof}

In view of Proposition \ref{Prop:hat_x,y} and Proposition \ref{prop:Desc:Phi-comp}, and \textbf{Line 5-13} in Algorithm \ref{FSIPL-Line search-comp}, it is easy to see that the Proposed FSIPL method is well defined, i.e., for each $k = 0,1,\dots$, the inner loop of Algorithm \ref{FSIPL-Line search-comp} terminates finitely. We present this result in the next proposition.

\begin{proposition}\label{prop:eta_min,tau_min-comp}
Assume that $\alpha$ satisfies \eqref{def-alpha}. For each $k = 0,1,\dots$, 
the inner loop of Algorithm \ref{FSIPL-Line search-comp} terminates at most $\max\{\lceil \log_{\gamma}\frac{\widetilde{\eta}}{\overline{\eta}}\rceil,\lceil \log_{\frac{\gamma}{2}} \frac{\widetilde{\tau}}{\overline{\tau}}\rceil,0\}$ inner iterations with $\widetilde{\eta}=\gamma\hat{\eta}_2(\bar{t},\overline{\Delta}),~~\widetilde{\tau}=\frac{\gamma}{2}\hat{\tau}_2$, where $\hat{\eta}_2$ and $\hat{\tau}_2$ are defined in \eqref{ub:eta,tau-comp-2}.
 \end{proposition}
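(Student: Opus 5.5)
The plan is to argue by induction on $k$, carrying the invariant $x^k\in\mathcal{M}_{\theta}$, and to show that once the backtracked stepsizes fall below the thresholds of Propositions \ref{Prop:hat_x,y} and \ref{prop:Desc:Phi-comp}, the acceptance test in Line 8 must hold. The base case is $x^0\in\mathcal{M}\subseteq\mathcal{M}_\theta$. For the induction step, any accepted iterate satisfies $\|h(x^{k+1})\|\le\theta/\kappa$, so Assumption \ref{assp:N} (i) gives $x^{k+1}\in\mathcal{M}_\theta$. The hypotheses of both propositions, which require $x^k\in\mathcal{M}_\theta$ and the inexact condition \eqref{inexact-solution-comp}, are therefore available at every outer iteration.

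Fix $k$. After $j$ backtracking steps the stepsizes are $\eta_k=\gamma^j\overline{\eta}$ and $\tau_k=(\gamma/2)^j\overline{\tau}$. First I will show that the thresholds can be taken uniform in $k$. Since $t_k\le\overline t$ and $\Delta_k\le\overline\Delta$, and $\hat\eta_1$ and $\hat\eta_2$ are nonincreasing in $t$ and in $\Delta$, we get $\hat\eta_2(t_k,\Delta_k)\ge\hat\eta_2(\overline t,\overline\Delta)$; note that $\hat\eta_2\le\hat\eta_1$. The bound $\hat\tau_2$ does not depend on $k$, and it is positive because \eqref{def-alpha} gives $\alpha C_1>M_1$.

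Next, let $J$ be the smallest index with $\gamma^J\overline\eta\le\hat\eta_2(\overline t,\overline\Delta)$ and $(\gamma/2)^J\overline\tau\le\hat\tau_2$. At $j=J$, conditions \eqref{ub:eta,tau-comp} and \eqref{ub:eta,tau-comp-2} both hold. Proposition \ref{Prop:hat_x,y}\eqref{x,y_range-comp} then gives $\|h(\hat x^{k+1})\|\le\theta/\kappa$. Proposition \ref{prop:Desc:Phi-comp} gives \eqref{Desc:Phi-comp}, and this implies \eqref{Desc:Phi-comp-L} because $\rho_k\ge0$. Hence the test passes no later than the $J$-th backtrack.

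The remaining work is the count. We have $\gamma^j\overline\eta\le\hat\eta_2$ as soon as $j\ge\log_\gamma(\hat\eta_2/\overline\eta)$, and similarly $(\gamma/2)^j\overline\tau\le\hat\tau_2$ once $j\ge\log_{\gamma/2}(\hat\tau_2/\overline\tau)$. The stated quantities use $\widetilde\eta=\gamma\hat\eta_2$ and $\widetilde\tau=\frac{\gamma}{2}\hat\tau_2$, which adds one unit to each logarithm. This covers the off-by-one between the number of reductions and the number of trial evaluations, and it also absorbs the ceiling: $\lceil\log_\gamma(\hat\eta_2/\overline\eta)\rceil\le\log_\gamma(\hat\eta_2/\overline\eta)+1=\log_\gamma(\widetilde\eta/\overline\eta)$. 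Taking the maximum with $0$ handles the case where the initial $\overline\eta,\overline\tau$ already satisfy the bounds. Combining these gives the bound in the statement.

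I expect the main obstacle to be bookkeeping rather than any analytic estimate. The two things that need care are the uniformity of $\hat\eta_2$ over $t_k\in[\underline t,\overline t]$ and $\Delta_k\in[0,\overline\Delta]$, obtained from the monotonicity of the explicit formulas, and keeping the invariant $x^k\in\mathcal{M}_\theta$ so that Propositions \ref{Prop:hat_x,y} and \ref{prop:Desc:Phi-comp} apply at every step.
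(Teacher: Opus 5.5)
Your proposal is correct and follows the paper's route. The paper only invokes Propositions \ref{Prop:hat_x,y} and \ref{prop:Desc:Phi-comp} together with the backtracking in Lines 5--13. You also make explicit two things the paper leaves implicit: the invariant $x^k\in\mathcal{M}_{\theta}$ (via $\|h(x^{k+1})\|\le\theta/\kappa$ and Assumption \ref{assp:N}~(i)), and the $k$-uniform threshold $\hat{\eta}_2(t_k,\Delta_k)\ge\hat{\eta}_2(\overline{t},\overline{\Delta})$.

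One bookkeeping remark: the extra factors $\gamma$ and $\gamma/2$ in $\widetilde{\eta},\widetilde{\tau}$ give exactly one unit of slack, and no ceiling needs absorbing since the statement keeps the ceiling. So the bound is cleanly valid as a count of reductions in Line 11, which is your $J$. As a count of trial points it fails in the edge case where the bound equals $0$ but one trial is still evaluated.
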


Next, we establish the outer iteration complexity of FSIPL method. Let $\varepsilon > 0$ be a given target accuracy, we provide a bound on $T(\varepsilon)$, which denotes the first outer iteration index to achieve an $\varepsilon$-stationary point (Definition \ref{def:optimization-stationary} (ii)), i.e., let \begin{equation}\label{def:complexity}
     T(\varepsilon):=\min\{k : x^k \text{ is an $\varepsilon$-stationary point of problem \eqref{prob:primal} } \}.
 \end{equation}
  
\begin{theorem}\label{the:complexity-comp}
Assume that $\alpha$ satisfies \eqref{def-alpha} and $\{\Delta_k:k\in\mathbb{N}\}\in\mathcal{S}$.
Let $\{x^k:k\in\mathbb{N}\}$ be the solution sequence generated by Algorithm \ref{FSIPL-Line search-comp}.   Then, there exists $U>0,V>0$ such that the following inequality holds:
\begin{equation}\label{ineq:complexity-comp}
    T(\varepsilon)\leq \max\{\frac{UV^2}{\varepsilon^2},\frac{UV}{\varepsilon}\}.
\end{equation}
\end{theorem}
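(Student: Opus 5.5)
We will prove the complexity bound with the standard sufficient-decrease-plus-telescoping argument, applied to the accepted descent condition \eqref{Desc:Phi-comp-L}.

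\textbf{Step 1: all iterates stay in $\mathcal{M}_\theta$ and all quantities are uniformly bounded.} By induction on $k$, starting from $x^0\in\mathcal{M}$, every accepted iterate satisfies $\|h(x^{k+1})\|\le\theta/\kappa$, so $x^{k+1}\in\mathcal{M}_\theta$ by Assumption~\ref{assp:N}(i). Proposition~\ref{prop:eta_min,tau_min-comp} guarantees that acceptance happens after finitely many backtracking steps. It also gives the uniform lower bounds $\eta_k\ge\widetilde\eta$ and $\tau_k\ge\widetilde\tau$, because the safeguard and \eqref{Desc:Phi-comp} hold as soon as $\eta_k\le\hat\eta_2(\bar t,\overline\Delta)\le\hat\eta_2(t_k,\Delta_k)$ and $\tau_k\le\hat\tau_2$. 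Lemma~\ref{lemma:d^k-bound-comp} with $t_k\ge\underline t$ and $\Delta_k\le\overline\Delta$ bounds $\|\lambda^k\|$ and $\|\mu^k\|$ by a constant $B$. Compactness of $\mathcal{M}_\theta$ bounds $\Phi_\alpha$ from below on it.

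\textbf{Step 2: telescoping.} Sum \eqref{Desc:Phi-comp-L} for $k=0,\dots,K-1$. This gives
\[
\sum_{k=0}^{K-1}\Big(\tfrac{\sigma}{2}\eta_k^2\|d^k\|^2+\tfrac{\sigma}{2}\tau_k^2\|h(y^k)\|+\tfrac{\eta_k}{2t_k}\|v^k\|^2\Big)\le \Phi_\alpha(x^0)-\inf_{\mathcal{M}_\theta}\Phi_\alpha+\overline\eta(\alpha+\ell_g+B)\sum_k\Delta_k+\sum_k\rho_k=:V_0 .
\]
The right-hand side is finite because $\{\Delta_k\},\{\rho_k\}\in\mathcal{S}$. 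Using the lower bounds on $\eta_k,\tau_k$ and $t_k\le\bar t$, it follows that $\min_{k<K}\big(\|d^k\|^2+\|v^k\|^2+\|h(y^k)\|\big)\le c\,V_0/K$.

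\textbf{Step 3: bound the residual by the quantities controlled in Step 2.} Take $(v,\lambda)=(v^k,\lambda^k)$ in $Res(x^k,\cdot,\cdot)$. By \eqref{eq-optimal-d,v}, $\mu^k-v^k/t_k\in\partial g(\mathcal{A}(x^k)+v^k)$. Substituting $\nabla f(x^k)+\nabla h(x^k)^\top\lambda^k+\nabla\mathcal{A}(x^k)^\top\mu^k=-d^k/t_k$ then gives
\[
\mathrm{dist}(0,\cdots)\le \|d^k\|/\underline t+\ell_{\mathcal A}\|v^k\|/\underline t .
\]
The remaining component $\|h(x^k)\|$ is the main obstacle, because the telescoped sum only controls $\|h(y^k)\|$. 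From \eqref{ineq:lemma-h(x)-1} I would reverse the estimate: $h(y^k)=h(x^k)+\eta_k\nabla h(x^k)d^k+O(\eta_k^2\|d^k\|^2)$, hence $\|h(x^k)\|\le\|h(y^k)\|+\eta_k\Delta_k+\tfrac{L_h}{2}\eta_k^2\|d^k\|^2$. The right-hand side also contains $\Delta_k$, which is only summable, not tied to $\varepsilon$. To handle this, I would add the $\Delta_k$ terms to the summed quantity, using $\sum_k\Delta_k^2<\infty$ since $\Delta_k\le\overline\Delta$ and $\sum_k\Delta_k<\infty$. Alternatively, one can use $\Delta_k$ from the inexact gradient condition, which ties $e_d^k$ to this argument.

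\textbf{Step 4: conclude.} With $r_k:=\|d^k\|^2+\|v^k\|^2+\|h(y^k)\|+\Delta_k^2$, we have $\sum_k r_k\le cV_0+\sum_k\Delta_k^2=:V'$. Moreover $Res(x^k,v^k,\lambda^k)\le c'\big(\sqrt{r_k}+r_k\big)$. The square-root part gives the $\varepsilon^{-2}$ term, since $\sqrt{r_k}<\varepsilon/(2c')$ once $r_k<\varepsilon^2/(4c'^2)$. The linear part arises because $\|h(y^k)\|$ enters unsquared, and it produces the $\varepsilon^{-1}$ term. So if $K>\max\{UV^2/\varepsilon^2,\,UV/\varepsilon\}$ with $U,V$ built from $c',V'$, some $k<K$ has $r_k$ below both thresholds, and hence $Res<\varepsilon$. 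This yields \eqref{ineq:complexity-comp}.
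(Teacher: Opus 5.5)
Your proposal is correct and follows essentially the same route as the paper. Both arguments telescope the accepted descent inequality \eqref{Desc:Phi-comp-L} using uniform lower bounds on $\eta_k,\tau_k$, recover $\|h(x^k)\|$ from $\|h(y^k)\|$ via the reverse linearization estimate, and bound the stationarity term of $Res(x^k,v^k,\lambda^k)$ through the optimality system \eqref{eq-optimal-d,v}. The one difference is minor: because $\|h(x^k)\|$ is summed unsquared, the paper simply adds $\sum_k\eta_k\Delta_k\le\overline{\eta}\sum_k\Delta_k$ to the bound on $\sum_k\max\{\|d^k\|^2,\|v^k\|^2,\|h(x^k)\|\}$, so your detour through $\Delta_k^2$ is valid but not needed.
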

\begin{proof}
Let\begin{equation*}\label{r_k-comp}
      r_k:=\frac{\sigma}{2}(\eta_k^2\|d^k\|^2+\tau_k^2\|h(y^k)\|)+\frac{\eta_k}{2t_k}\|v^k\|^2.
  \end{equation*} 
Invoking the search step in Algorithm \ref{FSIPL-Line search-comp}, the following inequality holds:
\[\Phi_{\alpha}(x^{k+1})\leq \Phi_{\alpha}(x^k)-r_k+(\alpha+\ell_g+\|\lambda^{k}\|+\|\mu^k\|)\Delta_k+\rho_k.\]
Summing up this inequality for $k=0,\dots,N-1$ and reformulating properly, we obtain
\begin{equation*}
   \sum_{k=0}^{K-1}r_k
    \leq \Phi_{\alpha}(x^0)-\Phi_{\alpha}(x^{K})+\sum_{k=0}^{K-1}(\alpha+\ell_g+\|\lambda^{k}\|+\|\mu^k\|)\Delta_k+\sum_{k=0}^{K-1}\rho_k.
\end{equation*}
It follows that
\begin{equation}\label{ineq:sum}
    \begin{aligned}
   & \sum_{k=0}^{K-1}(\|d^k\|^2+\|v^k\|^2+\|h(y^k)\|)\\
    \leq &\hat{a}\left(\Phi_{\alpha}(x^0)-\Phi_{\alpha}(x^{K})+\sum_{k=0}^{K-1}(\alpha+\ell_g+\|\lambda^{k}\|+\|\mu^k\|)\Delta_k+\sum_{k=0}^{K-1}\rho_k\right),
\end{aligned}
\end{equation}
where
\[ \hat{a}= \max\left\{\frac{2}{\sigma\min\{\widetilde{\eta}^2,\overline{\eta}^2,\widetilde{\tau}^2,\overline{\tau}^2\}},\frac{2\overline{t}}{\min\{\widetilde{\eta},\overline{\eta}\}}\right\}\text{ with $\widetilde{\eta}$ and $\widetilde{\tau}$ defined in Proposition \ref{prop:eta_min,tau_min-comp}}. \]
In view of Assumption \ref{assp:N} (ii) and \eqref{ineq-res-lem},
\[\|h(x^k)\|\leq \|h(y^k)\|+\eta_k\Delta_k+\frac{L_h}{2}\eta_k^2\|d^k\|^2,\] and then
\begin{equation}\label{ineq:d^2,h}
    \begin{aligned}
    &\sum_{k=0}^{K-1}\max\{\|d^k\|^2,\|v^k\|^2,\|h(x^k)\|\}\\\leq & \sum_{k=0}^{K-1}\max\{\|d^k\|^2,\|v^k\|^2,\|h(y^k)\|+\eta_k\Delta_k+\frac{L_h}{2}\eta_k^2\|d^k\|^2\}\\
    \leq& (\frac{L_h}{2}\overline{\eta}^2+1)\sum_{k=0}^{K-1}(\|d^k\|^2+\|v^k\|^2+\|h(y^k)\|)+\overline{\eta}\sum_{k=0}^{K-1}\Delta_k.
\end{aligned}
\end{equation}
Combining \eqref{ineq:d^2,h} with  \eqref{ineq:sum}, we obtain 
\begin{equation}\label{prf-max d, h}
    \begin{aligned}
    &\sum_{k=0}^{K-1}\max\{\|d^k\|^2,\|v^k\|^2,\|h(x^k)\|\}\\
    \leq & (\frac{L_h}{2}\overline{\eta}^2+1)\hat{a}\left(\Phi_{\alpha}(x^0)-\Phi_{\alpha}(x^{K})+\sum_{k=0}^{K-1}\rho_k\right)+\hat{b}\sum_{k=0}^{K-1}\Delta_k,
\end{aligned}
\end{equation}
where 
\[\hat{b}=(\frac{L_h}{2}\overline{\eta}^2+1)\hat{a}(\alpha+\ell_g+\lambda_{m}+\mu_{m})+\overline{\eta}\] with $\lambda_{m}$ and $\mu_{m}$ denoting the upper bounds of  $\{\|\lambda^k\|:k\in\mathbb{N}\}$ and $\{\|\mu^k\|:k\in\mathbb{N}\}$ respectively.
By Lemma \ref{lemma:d^k-bound-comp}, we know that  $\lambda_{m}$ and $\mu_{m}$ are finite. On the other hand, we deduce from \eqref{eq-optimal-d,v} that   
\begin{equation}\label{ineq:partial}
    \begin{aligned}
    &\mathrm{dist}(0,\nabla f(x^k)+\nabla\mathcal{A}(x^k)^{\top}\partial g\big(\mathcal{A}(x^k)+v^k\big)+ \nabla h(x^k)^{\top}\lambda^k)\\
    \leq &\|\nabla f(x^k)+\nabla\mathcal{A}(x^k)^{\top}(\mu^k-\frac{v^k}{t_k})+\nabla h(x^k)^{\top}\lambda^k\|  
    \leq \frac{\|d^k\|+\ell_{\mathcal{A}}\|v^k\|}{t_k},
\end{aligned}
\end{equation}
which implies 
\begin{equation}\label{ineq:Res}
    Res(x^k;v^k,\lambda^k)\leq  \left(1+\frac{1+\ell_{\mathcal{A}}}{\underline{t}}\right)\max\{\|d^k\|,\|v^k\|,\|h(x^k)\|\}.
\end{equation}
Let $\underline{\Phi_{\alpha}}\in\mathbb{R}$ denote the lower bound of  $\Phi_{\alpha}(x)$ over $\mathcal{M}_{\theta}$, $V:=1+\frac{1+\ell_{\mathcal{A}}}{\underline{t}}$ and 
\[U:=\hat{a}(\frac{L_h}{2}\overline{\eta}^2+1)\left(\Phi_{\alpha}(x_0)-\underline{\Phi_{\alpha}}+\widetilde{\rho}\right)+\hat{b}\widetilde{\Delta}\]
with $\widetilde{\rho}=\sum_{k=0}^{\infty}\rho_k,~\widetilde{\Delta}=\sum_{k=0}^{\infty}\Delta_k$.
Then, from \eqref{prf-max d, h} and  \eqref{ineq:Res}, we deduce that
\begin{equation}
    \min_{0\leq k\leq K-1} Res(x^k;v^k,\lambda^k)\leq V\max\{\frac{U^{1/2}}{K^{1/2}}, \frac{U}{K}\},
\end{equation}
We complete the proof.
\end{proof}

This result shows that the proposed FSIPL method can find an $\varepsilon$-stationary point of problem \eqref{prob:primal} within $O(1 / \varepsilon^2)$ outer iterations. Although the order $O(1 / \varepsilon^2)$is the same as that of feasible manifold proximal gradient-type methods, the present result is obtained for an infeasible and inexact scheme, without requiring exact solution of the proximal subproblem or a retraction at every iteration. Finally, we prove the subsequential convergence of the proposed method.
\begin{theorem}\label{the:convger-comp}
Assume that $\alpha$ satisfies \eqref{def-alpha} and $\{\Delta_k:k\in\mathbb{N}\}\in\mathcal{S}$.   Let $\{x^k:k\in\mathbb{N}\}$ be the solution sequence generated by Algorithm \ref{FSIPL-Line search-comp}.  Then,  any cluster point of $\{x^k:k\in\mathbb{N}\}$ is a stationary point of problem \eqref{prob:primal}.
\end{theorem}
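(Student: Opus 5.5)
The plan is to reuse the summability estimate from the proof of Theorem~\ref{the:complexity-comp} and then pass to the limit in the approximate KKT relations \eqref{eq-optimal-d,v}.

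\textbf{Step 1: vanishing residuals.} Since $x^0\in\mathcal{M}$, Proposition~\ref{Prop:hat_x,y}\eqref{x,y_range-comp} together with the safeguard test in Line~8 gives, by induction, $x^k\in\mathcal{M}_\theta$ for all $k$. Hence $\{x^k\}$ is bounded and $\Phi_\alpha$ is bounded below by $\underline{\Phi_\alpha}$ along the iterates. Lemma~\ref{lemma:d^k-bound-comp} bounds $\|\lambda^k\|$ and $\|\mu^k\|$ uniformly, since $\Delta_k\le\overline\Delta$ and $t_k\ge\underline t$. Letting $K\to\infty$ in \eqref{prf-max d, h} and using $\{\rho_k\},\{\Delta_k\}\in\mathcal S$, I get
\[
\sum_{k}\max\{\|d^k\|^2,\|v^k\|^2,\|h(x^k)\|\}<\infty .
\]
Therefore $d^k\to0$, $v^k\to0$ and $h(x^k)\to0$.

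\textbf{Step 2: passing to a subsequence.} Let $x^*$ be a cluster point with $x^{k_j}\to x^*$. Continuity of $h$ gives $h(x^*)=0$. Because $\{\lambda^{k}\}$, $\{\mu^{k}\}$ and $\{t_k\}\subset[\underline t,\overline t]$ are bounded, I pass to a further subsequence along which $\lambda^{k_j}\to\lambda^*$, $\mu^{k_j}\to\mu^*$ and $t_{k_j}\to t^*>0$. From \eqref{eq-optimal-d,v}, set $\xi^{k}:=\mu^{k}-v^{k}/t_{k}\in\partial g(\mathcal A(x^{k})+v^{k})$. The first equation of \eqref{eq-optimal-d,v} then reads
\[
\nabla f(x^k)+\nabla h(x^k)^\top\lambda^k+\nabla\mathcal A(x^k)^\top\xi^k=-\frac{d^k}{t_k}-\frac{\nabla\mathcal A(x^k)^\top v^k}{t_k}.
\]
This is essentially \eqref{ineq:partial}, and its right-hand side tends to $0$ by Step~1.

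\textbf{Step 3: closedness of the subdifferential.} Along the subsequence, $\xi^{k_j}\to\mu^*$ and $\mathcal A(x^{k_j})+v^{k_j}\to\mathcal A(x^*)$. Since $g$ is convex and continuous, the graph of $\partial g$ is closed, so $\mu^*\in\partial g(\mathcal A(x^*))$. Continuity of $\nabla f$, $\nabla h$ and $\nabla\mathcal A$ then yields
\[
0=\nabla f(x^*)+\nabla\mathcal A(x^*)^\top\mu^*+\nabla h(x^*)^\top\lambda^*.
\]
Together with $h(x^*)=0$, this is exactly the KKT system \eqref{KKT}, so $x^*$ is a stationary point of problem \eqref{prob:primal}.

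The only delicate point is Step~1. It needs the bound $\hat a$ in \eqref{ineq:sum}, which relies on the stepsizes being uniformly bounded below. Proposition~\ref{prop:eta_min,tau_min-comp} supplies this: $\eta_k\ge\min\{\widetilde\eta,\overline\eta\}$ and $\tau_k\ge\min\{\widetilde\tau,\overline\tau\}$. The argument also needs the uniform multiplier bounds $\lambda_m,\mu_m$ from Lemma~\ref{lemma:d^k-bound-comp}, which is valid because $x^k\in\mathcal M_\theta$. Both ingredients are already established, so the argument closes without any extra assumption.
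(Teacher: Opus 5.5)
Your proposal is correct and follows the paper's proof essentially step for step. It uses summability of $\|d^k\|^2,\|v^k\|^2,\|h(x^k)\|$ from the complexity estimate, boundedness of $(\lambda^k,\mu^k)$ via Lemma~\ref{lemma:d^k-bound-comp}, and a limit in the approximate optimality system via closedness of $\partial g$. Your $\xi^k=\mu^k-v^k/t_k$ only makes the paper's ``passing to the limit'' explicit, and extracting $t_{k_j}\to t^*$ is unnecessary because $t_k\ge\underline t$ already gives $v^k/t_k\to0$.

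One small correction to your closing remark concerns the stepsize bounds. Since $\eta_k$ and $\tau_k$ are backtracked simultaneously, $\eta_k\ge\min\{\widetilde\eta,\overline\eta\}$ and $\tau_k\ge\min\{\widetilde\tau,\overline\tau\}$ need not hold literally, because one stepsize can be driven below its threshold while the other is still too large. However, the inner-iteration cap $J$ of Proposition~\ref{prop:eta_min,tau_min-comp} gives $\eta_k\ge\overline\eta\gamma^{J}$ and $\tau_k\ge\overline\tau(\gamma/2)^{J}$, and that is all Step~1 needs.
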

\begin{proof}
 Let $K\to\infty$ in \eqref{prf-max d, h}, we obtain
\[\sum_{k=0}^{\infty}\|d^k\|^2<\infty,~~\sum_{k=0}^{\infty}\|v^k\|^2<\infty,~~\sum_{k=0}^{\infty}\|h(x^k)\|<\infty,\]
which implies that $\lim_{k\to\infty} d^k=0$, $\lim_{k\to\infty} v^k=0$ and $\lim_{k\to\infty} \|h(x^k)\|=0$. By Lemma \ref{lemma:d^k-bound-comp},
the sequence $\{(\lambda^k,\mu^k)\}$ is bounded. Hence, by taking a subsequence if necessary, we may assume that
$(\lambda^k,\mu^k)\to(\lambda^*,\mu^*)$. Passing to the limit in \eqref{eq-optimal-d,v}, and using $x^k\to x^*$, $d^k\to0$,
$v^k\to0$, the continuity of $\nabla f$, $\nabla h$ and $\nabla\mathcal A$, and the closedness of $\partial g$, we obtain
\[
0\in \nabla f(x^*)+\nabla\mathcal A(x^*)^\top\partial g(\mathcal A(x^*))
+\nabla h(x^*)^\top\lambda^* .
\]
Together with $h(x^*)=0$, this shows that $x^*$ is a stationary point of problem~\eqref{prob:primal}.
\end{proof}

\section{Sequential convergence}\label{sec:converge}  
In this section, we establish the convergence of the full sequence generated by  FSIPL (Algorithm \ref{FSIPL-Line search-comp}) under the Kurdyka--\L{}ojasiewicz (KL) property assumption. To this end, we first review the notions of KL property (\citet{attouch2010proximal}) and uniformized KL property (\citet{bolte2014proximal}). 
\begin{definition}\label{lemma:KL}
(KL property). A proper function $ \psi : \mathbb{R}^n \to (-\infty, +\infty] $ is said to satisfy the KL property at $ x \in \text{dom}(\partial \psi) $ if there exist $ \varepsilon \in (0, +\infty] $, $ \delta > 0 $, and a continuous concave function $ \phi : [0, \varepsilon) \to \mathbb{R}_+ := [0, +\infty) $, such that:  

\begin{enumerate}
    \item[(i)] $ \phi(0) = 0 $;  
    \item[(ii)] $ \phi $ is continuously differentiable on $ (0, \varepsilon) $ with $ \phi' > 0 $;  
    \item[(iii)] For any $ z \in \mathcal{B}(x, \delta) \cap \{ z \in \mathbb{R}^n : \psi(x) < \psi(z) < \psi(x) + \varepsilon \} $, there holds  
    \[
    \phi' \left( \psi(z) - \psi(x) \right) \mathrm{dist}(0, \partial \psi(z)) > 1.
    \]
\end{enumerate}

\end{definition}  
    
\begin{lemma}\label{lemma:uniform^kL}
(Uniformized KL property). Let $ \Upsilon \subseteq \mathbb{R}^n $ be a compact set, and let the proper function $ \psi : \mathbb{R}^n \to (-\infty, +\infty] $ be constant on $ \Upsilon $. If $ \psi $ satisfies the KL property at each point of $ \Upsilon $, then there exist $ \varepsilon, \delta > 0 $ and a continuous concave function $ \phi : [0, \varepsilon) \to [0, +\infty) $ satisfying Definition \ref{lemma:KL} (i) and (ii) such that  
\[
\phi' \left( \psi(z) - \psi(x) \right) \, \text{dist}(0_n, \partial \psi(z)) \geq 1
\]  
holds for any $ x \in \Upsilon $ and $ z \in \mathcal{B}(x, \delta) \cap \left\{ z \in \mathbb{R}^n : \psi(x) < \psi(z) < \psi(x) + \varepsilon \right\} $.
\end{lemma}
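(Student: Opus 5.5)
The plan is the standard compactness (finite subcover) argument of \citet{bolte2014proximal}. Let $\psi^*$ be the constant value of $\psi$ on $\Upsilon$. For each $x\in\Upsilon$, Definition~\ref{lemma:KL} provides $\varepsilon_x>0$, $\delta_x>0$ and a desingularizing function $\phi_x:[0,\varepsilon_x)\to\mathbb{R}_+$ with properties (i)--(iii). (If $\varepsilon_x=+\infty$, replace it by any finite positive number; (iii) still holds on the smaller level band.) The open balls $\mathcal{B}(x,\delta_x/2)$, $x\in\Upsilon$, cover the compact set $\Upsilon$. Hence there are finitely many points $x_1,\dots,x_p\in\Upsilon$ with
\[
\Upsilon\subseteq\bigcup_{i=1}^p\mathcal{B}(x_i,\delta_{x_i}/2).
\]
Set $\varepsilon:=\min_i\varepsilon_{x_i}>0$ and $\delta:=\min_i\delta_{x_i}/2>0$.

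Next I build a single desingularizing function by summation:
\[
\phi(s):=\sum_{i=1}^p\phi_{x_i}(s),\qquad s\in[0,\varepsilon).
\]
Each $\phi_{x_i}$ is defined on $[0,\varepsilon_{x_i})\supseteq[0,\varepsilon)$, so $\phi$ is well defined. It is continuous, concave, satisfies $\phi(0)=0$, and is $C^1$ on $(0,\varepsilon)$ with $\phi'=\sum_i\phi_{x_i}'>0$. So $\phi$ meets Definition~\ref{lemma:KL} (i) and (ii). Because every summand has positive derivative, we also get $\phi'(s)\ge\phi_{x_i}'(s)$ for every $i$ and every $s\in(0,\varepsilon)$.

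Finally, fix $x\in\Upsilon$ and $z\in\mathcal{B}(x,\delta)$ with $\psi(x)<\psi(z)<\psi(x)+\varepsilon$. Choose $i$ with $x\in\mathcal{B}(x_i,\delta_{x_i}/2)$. By the triangle inequality, $\|z-x_i\|<\delta+\delta_{x_i}/2\le\delta_{x_i}$. Since $\psi(x)=\psi(x_i)=\psi^*$, the level condition rewrites as $\psi(x_i)<\psi(z)<\psi(x_i)+\varepsilon\le\psi(x_i)+\varepsilon_{x_i}$. Therefore $z$ lies in the KL neighborhood of $x_i$, and (iii) at $x_i$ gives
\[
\phi'(\psi(z)-\psi(x))\,\mathrm{dist}(0,\partial\psi(z))\ \ge\ \phi_{x_i}'(\psi(z)-\psi^*)\,\mathrm{dist}(0,\partial\psi(z))\ >\ 1,
\]
which is the claim.

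The only delicate point is producing one common $\phi$ from finitely many $\phi_{x_i}$. Taking a pointwise minimum would not do, since a minimum of concave functions is concave but need not be $C^1$. Summation avoids this: it keeps concavity and smoothness, and it dominates each derivative. The remaining bookkeeping is to restrict every $\phi_{x_i}$ to the common interval $[0,\varepsilon)$ and to handle the possibly infinite $\varepsilon_x$ as noted above.
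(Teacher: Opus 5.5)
Your proof is correct: the finite subcover by half-radius balls with $\delta=\min_i\delta_{x_i}/2$ and $\varepsilon=\min_i\varepsilon_{x_i}$, together with the summed desingularizing function $\phi=\sum_i\phi_{x_i}$ (using $\phi'\ge\phi_{x_i}'$), is the standard argument of Bolte, Sabach and Teboulle (2014, Lemma 6). The paper gives no proof of its own and simply recalls the lemma from that reference, so your route is the one the paper relies on.
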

A proper function $ \psi : \mathbb{R}^n \to (-\infty, +\infty] $ is called a KL function if it satisfies the KL property at any point in $ \text{dom}(\partial \psi)$. Recall that the important class of semialgebraic functions is known to satisfy the nonsmooth KL 
property (\citet{bolte2014proximal}). The KL-based framework for proving sequential convergence of descent algorithms for nonconvex optimization was first established in \citet{attouch2010proximal,attouch2013convergence}, and has been widely used and extended in the literature. Below we present an extension of the original frameworks in  Proposition \ref{prop:kl-converge} which best matches the setting of this paper. For readability, we defer the proof to Appendix \ref{append-proof}.
\begin{proposition}\label{prop:kl-converge}
    Let $ \psi: \mathbb{R}^{q_1}\times \mathbb{R}^{q_2} \to (-\infty, \infty] $ be a proper lower semicontinuous function. Consider a bounded sequence $\{(u^{k},\upsilon^k)\in\mathbb{R}^{q_1}\times \mathbb{R}^{q_2}  :  k\in\mathbb{N}\}$ and a nonnegative sequence $\{\xi_k\in\mathbb{R}_{+} :  k\in\mathbb{N}\}$ satisfying the following three conditions.
    \begin{enumerate}
        \item[(a)] (Descent Property)
        There exist $c_0>0$, $c_1>0$ and $k_1\in\mathbb{N}$ such that, for all $k\geq k_1$,  
        \begin{equation*}
        \|u^{k+1}-u^{k}\|\leq c_0\xi_{k+1} ~\text{ and }~
            c_1\xi_{k+1}^2\leq \psi(u^k,\upsilon^k)-\psi(u^{k+1},\upsilon^{k+1}).
        \end{equation*}
        
        \item [(b)] (Relative Error)
        There exist $c_2>0$ and $k_2\in\mathbb{N}$ such that, for all $k\geq k_2$,
        \begin{equation*}
            \mathrm{dist}(0,\partial \psi(u^{k+1},\upsilon^{k+1}))\leq c_2\xi_{k+1}.
         \end{equation*}
       \item [(c)] (Continuity)
              The limit $\psi_{\infty}:=\lim_{k\to\infty} \psi(u^{k},\upsilon^k)$ exists and $\psi\equiv \psi_{\infty}$ on $\Lambda$, where $\Lambda$ is the set of cluster points of the sequence $\{(u^{k},\upsilon^k)  :  k\in\mathbb{N}\}$. 
    \end{enumerate}
  If $\psi$ satisfies the KL property at each point within $\Lambda$, then  $\sum_{k=0}^{\infty} \xi_{k}<\infty$ and it follows that $\sum_{k=0}^{\infty} \|u^{k+1}-u^k\|<\infty$, $0\in\partial\psi(u^*,\upsilon^*)$ where $u^*=\lim_{k\to\infty} u^k$ and $\upsilon^*$ is any cluster point of $\{\upsilon^k\}$. 
\end{proposition}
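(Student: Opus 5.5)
The argument follows the standard Attouch--Bolte--Svaiter template, adapted to the auxiliary variable $\upsilon^k$ and the surrogate quantity $\xi_k$. Since $\{(u^k,\upsilon^k)\}$ is bounded, its cluster set $\Lambda$ is nonempty and compact. By (a), $\psi(u^k,\upsilon^k)$ is nonincreasing for $k\ge k_1$, and by (c) it converges to $\psi_\infty$; hence $\psi(u^k,\upsilon^k)\ge\psi_\infty$ for large $k$.

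First I dispose of the trivial case. If $\psi(u^{K},\upsilon^{K})=\psi_\infty$ for some $K\ge k_1$, then monotonicity forces $\psi$ to be constant from $K$ on. Then (a) gives $\xi_{k+1}=0$ for $k\ge K$, so $\sum\xi_k<\infty$ trivially. Otherwise $\psi(u^k,\upsilon^k)>\psi_\infty$ for all large $k$.

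Next I apply Lemma~\ref{lemma:uniform^kL} on $\Upsilon=\Lambda$, where $\psi\equiv\psi_\infty$ by (c), to obtain $\varepsilon,\delta,\phi$. Because $\mathrm{dist}((u^k,\upsilon^k),\Lambda)\to0$ (a standard consequence of boundedness) and $\psi(u^k,\upsilon^k)\downarrow\psi_\infty$, there is $\bar k\ge\max\{k_1,k_2\}$ such that for $k\ge\bar k$ the iterate lies in the KL neighborhood of some point of $\Lambda$. There
\[
\phi'\bigl(\psi(u^k,\upsilon^k)-\psi_\infty\bigr)\,\mathrm{dist}\bigl(0,\partial\psi(u^k,\upsilon^k)\bigr)\ge1 .
\]
Set $\Delta_k:=\phi(\psi(u^k,\upsilon^k)-\psi_\infty)$. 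Concavity of $\phi$, (a) and (b) then give, for $k\ge\bar k+1$,
\[
\Delta_k-\Delta_{k+1}\ge\phi'\bigl(\psi(u^k,\upsilon^k)-\psi_\infty\bigr)\bigl(\psi(u^k,\upsilon^k)-\psi(u^{k+1},\upsilon^{k+1})\bigr)\ge\frac{c_1\xi_{k+1}^2}{c_2\xi_k}.
\]
If $\xi_k=0$, then (a) gives $\xi_{k+1}=0$ directly. Otherwise this yields $\xi_{k+1}^2\le C\,\xi_k(\Delta_k-\Delta_{k+1})$ with $C=c_2/c_1$. Applying AM--GM gives $2\xi_{k+1}\le\xi_k+C(\Delta_k-\Delta_{k+1})$. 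Summing telescopically from $\bar k+1$ to $N$ yields $\sum_{k}\xi_{k+1}\le\xi_{\bar k+1}+C\Delta_{\bar k+1}<\infty$, so $\sum\xi_k<\infty$.

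Finally, the first inequality in (a) gives $\sum\|u^{k+1}-u^k\|\le c_0\sum\xi_{k+1}<\infty$. Hence $\{u^k\}$ is Cauchy and $u^k\to u^*$. Let $\upsilon^*$ be any cluster point of $\{\upsilon^k\}$, say along a subsequence $k_j$. Then $(u^*,\upsilon^*)\in\Lambda$, and by (c), $\psi(u^{k_j},\upsilon^{k_j})\to\psi_\infty=\psi(u^*,\upsilon^*)$. By (b), $\mathrm{dist}(0,\partial\psi(u^{k_j},\upsilon^{k_j}))\le c_2\xi_{k_j}\to0$. Picking $w^{j}\in\partial\psi(u^{k_j},\upsilon^{k_j})$ with $w^j\to0$ and using the closedness of the limiting subdifferential noted after Definition~\ref{def:subdiff}, we get $0\in\partial\psi(u^*,\upsilon^*)$.

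The main obstacle is the bookkeeping of indices. The descent in (a) and the relative error in (b) are both stated in terms of $\xi_{k+1}$, while the KL inequality is evaluated at $k$ and so involves $\xi_k$. This is why the recursion is set up as above rather than through $\|u^{k+1}-u^k\|$ directly. The other point needing care is justifying uniformly that late iterates enter the KL neighborhood, which requires the strict positivity $\psi(u^k,\upsilon^k)>\psi_\infty$ in the nontrivial case.
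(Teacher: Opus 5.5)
Your proof is correct and follows essentially the same route as the paper's: uniformized KL on $\Lambda$, the concavity bound $\Delta_k-\Delta_{k+1}\ge c_1\xi_{k+1}^2/(c_2\xi_k)$, AM--GM with telescoping, and closedness of $\partial\psi$ using the value convergence supplied by (c). Your case split on whether $\psi(u^K,\upsilon^K)=\psi_\infty$ is cleaner than the paper's split on eventual constancy of the iterates. One small correction: (a) does not give $\xi_{k+1}=0$ from $\xi_k=0$, but that branch is vacuous, since for $k\ge\bar k+1$ the KL inequality forces $\mathrm{dist}(0,\partial\psi(u^k,\upsilon^k))>0$, and hence $\xi_k>0$ by (b).
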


We now turn to the application of Proposition~\ref{prop:kl-converge} to the sequence generated by FSIPL. For this purpose, one needs to construct an auxiliary lower semicontinuous function, together with an associated extended sequence built from the iterates of FSIPL, for which the abstract descent and relative-error conditions in Proposition~\ref{prop:kl-converge} can be verified. This construction is not immediate in the present setting, because the iterates are generally infeasible and the proximal linearized subproblem is solved only inexactly. These two features generate feasibility residuals, linear approximation errors, and inexactness errors, which are the main
obstacles in applying the standard KL convergence argument directly. The merit function $\Phi_\alpha$ is the quantity used in the line-search procedure and has been sufficient for the preceding subsequential convergence and complexity analysis. However, it is not well suited for the present KL framework. Indeed, the subgradients of $g(\mathcal A(x))$
and $\|h(x)\|$ in $\Phi_\alpha$ at $x^k$ are not directly reflected in the approximate optimality system of the proximal linearized subproblem. Consequently, there is no apparent way to verify the relative-error condition in Proposition~\ref{prop:kl-converge} by using $\Phi_\alpha$. We therefore first extract the actual descent structure produced by one iteration of FSIPL. This is the purpose of the following lemma.
\begin{lemma}\label{lem:Desc:Q}
  Let $\{(x^k,y^k,d^k,v^k,\lambda^k,\mu^k):k\in\mathbb{N}\}$ be generated by Algorithm \ref{FSIPL-Line search-comp} with $\alpha$ satisfying \eqref{def-alpha}, $\overline{\eta}=1$, $\overline{\tau}\leq \hat{\tau}_2$, $\Delta_k$ and $t_k$ satisfying that $\hat{\eta}_2(t_k,\Delta_k)\geq 1$,  where $\hat{\eta}_2(t_k,\Delta_k)$ and $\hat{\tau}_2$ are defined in \eqref{ub:eta,tau-comp-2}.
 Then, for every $k\geq 1$, the following inequality holds:
  \begin{equation}\label{Desc:Q}
      \begin{aligned}
         &Q_{k+1}-Q_{k}
         \leq  -\frac{\sigma}{2}(\|d^{k}\|^2+\overline{\tau}^2\|h(y^k)\|)-\frac{1}{2t_k}\|v^k\|^2+(\alpha+\|\lambda^{k}\|+\|\mu^{k}\|)\Delta_{k}+\ell_g\Delta_{k-1},
    \end{aligned}
  \end{equation}
  where 
  \begin{align}\label{def-Q}
      Q_{k}=f(x^{k})+g\big(\mathcal{A}(x^{k-1})+v^{k-1}\big)+\frac{\ell_gL_{\mathcal{A}}}{2}\|d^{k-1}\|^2+\ell_g\ell_{\mathcal{A}}\|x^{k}-y^{k-1}\|+\alpha\|h(x^{k})\|.
  \end{align}
\end{lemma}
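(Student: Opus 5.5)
The plan is to mimic the proof of Proposition~\ref{prop:Desc:Phi-comp}, but to stop the chain of inequalities one step earlier for the $g$-part, so that the lagged quantity $g(\mathcal{A}(x^{k-1})+v^{k-1})$ appears in place of $g(\mathcal{A}(x^k))$.

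\textbf{Step 1: the full step is accepted.} Since $x^0\in\mathcal{M}$ and every accepted iterate lies in $\mathcal{M}_\theta$ by Proposition~\ref{Prop:hat_x,y}\eqref{x,y_range-comp}, we have $x^k\in\mathcal{M}_\theta$ for all $k$. The hypotheses $\overline{\eta}=1\le\hat{\eta}_2(t_k,\Delta_k)$ and $\overline{\tau}\le\hat{\tau}_2$ mean that the first trial stepsizes already satisfy \eqref{ub:eta,tau-comp-2}. Proposition~\ref{prop:Desc:Phi-comp}, together with $\rho_k\ge0$, then shows that \eqref{Desc:Phi-comp-L} holds at the first trial. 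Hence no backtracking occurs, and
\[
\eta_k=1,\qquad \tau_k=\overline{\tau},\qquad y^k=x^k+d^k \quad\text{for every } k .
\]
In particular $x^k,x^{k+1}\in\mathcal{M}_\theta$ and $y^k\in\mathcal{M}_{2\theta}$, so the local Lipschitz constants $\ell_f,\ell_{\mathcal{A}},\sqrt{C_2}$ apply to all the differences below.

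\textbf{Step 2: bound the terms of $Q_{k+1}$.}
\begin{itemize}
\item For $f$, I would write $f(x^{k+1})\le f(y^k)+\ell_f\|x^{k+1}-y^k\|$ and then use the descent lemma
\[
f(y^k)\le f(x^k)+\langle\nabla f(x^k),d^k\rangle+\tfrac{L_f}{2}\|d^k\|^2 .
\]
\item For the term $g(\mathcal{A}(x^k)+v^k)$ in $Q_{k+1}$, I would invoke \eqref{ineq:lem1-prox-comp} together with \eqref{ineq-res-lem}:
\[
g(\mathcal{A}(x^k)+v^k)+\langle\nabla f(x^k),d^k\rangle+\tfrac{1}{2t_k}\bigl(\|d^k\|^2+\|v^k\|^2\bigr)\le g(\mathcal{A}(x^k))+(\|\lambda^k\|+\|\mu^k\|)\Delta_k .
\]
\end{itemize}
Adding these removes the linear term $\langle\nabla f(x^k),d^k\rangle$.

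\textbf{Step 3: the lag step, which is the key point.} I bound $g(\mathcal{A}(x^k))$ by $g(\mathcal{A}(x^{k-1})+v^{k-1})$ plus $\ell_g\|\mathcal{A}(x^k)-\mathcal{A}(x^{k-1})-v^{k-1}\|$. Using $y^{k-1}=x^{k-1}+d^{k-1}$, I split the vector inside the norm as
\[
\bigl[\mathcal{A}(x^k)-\mathcal{A}(y^{k-1})\bigr]+\bigl[\mathcal{A}(y^{k-1})-\mathcal{A}(x^{k-1})-\nabla\mathcal{A}(x^{k-1})d^{k-1}\bigr]+\bigl[\nabla\mathcal{A}(x^{k-1})d^{k-1}-v^{k-1}\bigr].
\]
The three pieces have norms at most $\ell_{\mathcal{A}}\|x^k-y^{k-1}\|$, $\tfrac{L_{\mathcal{A}}}{2}\|d^{k-1}\|^2$ and $\Delta_{k-1}$ respectively. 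The first two, multiplied by $\ell_g$, are exactly the extra terms placed in $Q_k$, so they cancel against $-Q_k$. What remains from this step is $\ell_g\Delta_{k-1}$, and $f(x^k)$ also cancels.

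\textbf{Step 4: feasibility term and collection.} For $\alpha\|h(x^{k+1})\|$ I use Proposition~\ref{Prop:hat_x,y}\eqref{ineq:Desc:h(x)} with $\eta_k=1$. The distance $\|x^{k+1}-y^k\|$ carries the coefficient $\ell_f+\ell_g\ell_{\mathcal{A}}$: the $\ell_f$ part comes from $f$ in Step~2 and the $\ell_g\ell_{\mathcal{A}}$ part is the new term in $Q_{k+1}$. I bound this distance as in \eqref{prf-(x-y)} or \eqref{prf-(x-y)-2}, and split into the two cases of \eqref{update-x}.
\begin{itemize}
\item The coefficient of $\|d^k\|^2$ becomes $-\tfrac12\bigl(\tfrac1{t_k}-L_f-\ell_gL_{\mathcal{A}}-\alpha L_h\bigr)$. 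This is at most $-\sigma/2$ because $\hat{\eta}_2(t_k,\Delta_k)\ge1$ forces $t_k(L_f+\ell_gL_{\mathcal{A}}+\alpha L_h+\sigma)\le1$.
\item The $\|h(y^k)\|$ terms are handled exactly as in \eqref{prf:phi(x)-1} and \eqref{prf:phi(x)-2}, using \eqref{def-alpha} and $\overline{\tau}\le\hat{\tau}_2$. This yields $-\tfrac{\sigma}{2}\overline{\tau}^2\|h(y^k)\|$.
\end{itemize}
The remaining terms $-\tfrac1{2t_k}\|v^k\|^2$ and $(\alpha+\|\lambda^k\|+\|\mu^k\|)\Delta_k$ appear directly, which gives \eqref{Desc:Q}.

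The main obstacle is Step~3. The definition of $Q_k$ is tailored precisely so that the linearization and inexactness errors of the \emph{previous} iteration telescope. I must therefore check that the $\ell_g\ell_{\mathcal{A}}\|x^k-y^{k-1}\|$ and $\tfrac{\ell_gL_{\mathcal{A}}}{2}\|d^{k-1}\|^2$ terms cancel exactly. I must also check that no $\ell_g\Delta_k$ term survives, which is why \eqref{Desc:Q} contains $\ell_g\Delta_{k-1}$ instead.
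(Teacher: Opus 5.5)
Your proposal is correct and follows the paper's proof essentially step for step. Like the paper, you first show that the initial stepsizes $\eta_k=1$, $\tau_k=\overline{\tau}$ are always accepted, then combine the subproblem inequality with the descent lemma for $f$, then bound $g(\mathcal{A}(x^k))$ by the lagged value $g(\mathcal{A}(x^{k-1})+v^{k-1})$ using the same three-term splitting, and finally conclude with Proposition~\ref{Prop:hat_x,y}(iii) and the two-case argument of Proposition~\ref{prop:Desc:Phi-comp}. Your bookkeeping is in fact slightly cleaner than the paper's: its intermediate summed inequality writes $\ell_gL_{\mathcal{A}}\|x^k-y^{k-1}\|$ where $\ell_g\ell_{\mathcal{A}}\|x^k-y^{k-1}\|$ is meant, and that is exactly the term you correctly cancel against $Q_k$.
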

\begin{proof}
By the choice of $t_k$ and $\Delta_k$, \eqref{x,y_range-comp} in Proposition \ref{Prop:hat_x,y}, and Proposition \ref{prop:Desc:Phi-comp}, it holds that for all $k\geq 1$, $\eta_k=\overline{\eta}=1$, $\tau_k=\overline{\tau}$ and  $(x^k,y^k)\in\mathcal{M}_{\theta}\times\mathcal{M}_{2\theta}$.
It follows from \eqref{def:d^k-comp} and \eqref{def:v^k-comp} that  
\begin{equation}\label{ineq:prop4-prox-comp}
    \begin{aligned}
&g(\mathcal{A}(x^{k})+v^{k})+\langle \nabla f(x^{k}),d^{k}\rangle+\frac{1}{2t_k}\|d^{k}\|^2+\frac{1}{2t_k}\|v^{k}\|^2\\
       \leq& g\big(\mathcal{A}(x^{k})\big)-\langle \lambda^{k}, \nabla h(x^{k})d^{k}\rangle-\langle\mu^{k},\nabla\mathcal{A}(x^{k})d^{k}-v^{k}\rangle\\
       \leq & g\big(\mathcal{A}(x^{k})\big)+(\|\lambda^k\|+\|\mu^k\|)\Delta_k,
    \end{aligned}
\end{equation}
where the last inequality comes from \eqref{ineq-res-lem}.
We derive from Assumption \ref{assp:N} (iii) that \begin{equation}
        f(y^{k})\leq f(x^{k})+\langle \nabla f(x^{k}), d^k\rangle +\frac{L_f}{2}\|d^k\|^2,\label{ineq:prop4.4-f1}
    \end{equation}
    and by the Lipschitz continuity of $f$, 
    \begin{equation}\label{ineq:prop4.4-f2}
        f(x^{k+1})\leq f(y^k)+\ell_f\|x^{k+1}-y^k\|.
    \end{equation}
    Note that by the Lipschitz continuity of $\mathcal{A}$ and $\nabla \mathcal{A}$, it holds that 
    \begin{equation}\label{ineq:Ax-}
        \begin{aligned}
    &\|\mathcal{A}(x^k)-(\mathcal{A}(x^{k-1})+v^{k-1})\|\\
    \leq&\|\mathcal{A}(x^k)-\mathcal{A}(y^{k-1})\|+\|\mathcal{A}(y^{k-1})-(\mathcal{A}(x^{k-1})+\nabla\mathcal{A}(x^{k-1})d^{k-1})\|\\
    &+\|\nabla\mathcal{A}(x^{k-1})d^{k-1}-v^{k-1}\|\\
    \leq& \ell_{\mathcal{A}}\|x^k-y^{k-1}\|+\frac{L_{\mathcal{A}}}{2}\|d^{k-1}\|^2+\Delta_{k-1},
\end{aligned}
    \end{equation}
then it follows from the Lipschitz continuity of $g$ that 
\begin{equation}\label{ineq:prop4.4-g}
    \begin{aligned}
       & g\big(\mathcal{A}(x^k)\big)-g\big(\mathcal{A}(x^{k-1})+v^{k-1}\big)\\\leq &\ell_g\|\mathcal{A}(x^k)-(\mathcal{A}(x^{k-1})+v^{k-1})\|\\
        \leq&\ell_g\ell_{\mathcal{A}}\|x^k-y^{k-1}\|+\frac{\ell_gL_{\mathcal{A}}}{2}\|d^{k-1}\|^2+\ell_g\Delta_{k-1}.
    \end{aligned}
\end{equation}
    Summing \eqref{ineq:prop4-prox-comp}, \eqref{ineq:prop4.4-f1}, \eqref{ineq:prop4.4-f2} and \eqref{ineq:prop4.4-g}, we obtain
    \begin{equation}\label{ineq:prop4-1}
        \begin{aligned}    &f(x^{k+1})+g\big(\mathcal{A}(x^k)+v^k\big)+\frac{\ell_gL_{\mathcal{A}}}{2}\|d^k\|^2+\ell_g\ell_{\mathcal{A}}\|x^{k+1}-y^k\|\\
        \leq &f(x^k)+g\big(\mathcal{A}(x^{k-1})+v^{k-1}\big)+\frac{\ell_gL_{\mathcal{A}}}{2}\|d^{k-1}\|^2+\ell_gL_{\mathcal{A}}\|x^k-y^{k-1}\|+\ell_g\Delta_{k-1}\\
        &-\frac{1}{2}(\frac{1}{t_k}-L_f-\ell_gL_{\mathcal{A}})\|d^k\|^2-\frac{1}{2t_k}\|v^k\|^2+(\|\lambda^k\|+\|\mu^k\|)\Delta_{k}+(\ell_f+\ell_g\ell_{\mathcal{A}})\|x^{k+1}-y^k\|.
    \end{aligned}
    \end{equation}
    Combining the item \eqref{ineq:Desc:h(x)} in Proposition \ref{Prop:hat_x,y} and \eqref{ineq:prop4-1}, following the similar argument as the proof of Proposition \ref{prop:Desc:Phi-comp}, we obtain \eqref{Desc:Q}.
\end{proof}

Lemma~\ref{lem:Desc:Q} shows that the available descent estimate is not expressed solely in terms of the merit function $\Phi_\alpha$. Instead, it involves the shifted composite value $g(\mathcal A(x^k)+v^k)$, the feasibility residual, the linear approximation errors, the inexactness error, and memory terms. This estimate suggests what quantities should be incorporated into the auxiliary function. On the other hand, the relative-error condition in Proposition~\ref{prop:kl-converge} determines
how these quantities should be embedded into an extended-variable
function. Motivated by these two requirements, we introduce the following auxiliary function $P:\mathbb{R}^n\times\mathbb{R}^p\times\mathbb{R}^m\times\mathbb{R}^p\times\mathbb{R}\to(-\infty,+\infty]$ defined at $(x,z,\upsilon,s,\omega)$ as  
\begin{equation}\label{def:P}  P(x,z,\upsilon,s,\omega):=f(x)+g(z)+\delta_{\mathcal{C}}(x,\upsilon)+\delta_{\mathcal{D}}(x,z,s)+\omega^2,
\end{equation} where
\begin{equation}\label{def:C}
    \mathcal{C}:=\{(x,\upsilon)\in\mathbb{R}^n\times\mathbb{R}^m :  h_i(x)=\upsilon_i^3,~i=1,\dots,m\},
\end{equation}
and \begin{equation}\label{def:D}
    \mathcal{D}:=\{(x,z,s)\in\mathbb{R}^n\times\mathbb{R}^p\times \mathbb{R}^p :  a_j(x)-z_j=s_j^3,~j=1,\dots,p\}
\end{equation}
with $\mathcal{A}(x)=[a_1(x),\dots,a_p(x)]^{\top}$.
Obviously, $P$ is a proper lower semicontinuous function. Moreover, for the points considered below, the normal cone of $\mathcal{C}$  at $(x,\upsilon)$ and the normal cone of $\mathcal{D}$ at $(x,z,s)$, are respectively given by
\[N_{\mathcal{C}}(x,\upsilon)=\left\{\begin{bmatrix}
    \nabla h(x)^{\top}\\-3\mathrm{diag}(\upsilon_i^2)
\end{bmatrix}\lambda :  \lambda\in\mathbb{R}^m\right\},~~N_{\mathcal{D}}(x,z,s)=\left\{\begin{bmatrix}
    \nabla \mathcal{A}(x)^{\top}\\
    -I_p\\-3\mathrm{diag}(s_j^2)
\end{bmatrix}\mu :  \mu\in\mathbb{R}^p\right\}\]
where \[\mathrm{diag}(\upsilon_i^2)=\begin{bmatrix}
    \upsilon_1^2&&\\
    &\ddots\\
    &&\upsilon_{m}^2
\end{bmatrix},~~\mathrm{diag}(s_j^2)=\begin{bmatrix}
    s_1^2&&\\
    &\ddots\\
    &&s_{p}^2
\end{bmatrix}.\]
The role of the auxiliary variables in $P$ can be understood as follows. The shifted composite argument $\mathcal A(x^k)+v^k$ rather than $ A(x^k)$ appears in the descent estimate of Lemma~\ref{lem:Desc:Q}; hence the nonsmooth term is written as $g(z)$. The variables $\upsilon$ and $s$ encode, respectively, the feasibility residual $h(x)$ and the mismatch between the actual composite value $\mathcal A(x)$ and the shifted composite argument $z$  through the graph constraints $h_i(x)=\upsilon_i^3$ and $\mathcal A_j(x)-z_j=s_j^3$. These constraints allow the normal cone terms of the graph sets to reproduce the multiplier components $\nabla h(x)^\top\lambda$ and $\nabla\mathcal A(x)^\top\mu$ arising from the approximate optimality system of the subproblem. The cubic form is used so that the additional components involving $\upsilon_i^2\lambda_i$ and $s_j^2\mu_j$ are of higher order and can be
controlled by the residual quantities. Finally, the scalar variable $\omega$ collects the remaining nonnegative terms in the descent quantity, including the feasibility, linearization, inexactness, and memory terms. Thus, we embed the descent quantity suggested by Lemma \ref{lem:Desc:Q} into a single lower semicontinuous function $P$, for which the relative-error condition in Proposition \ref{prop:kl-converge} can then be verified.

Suppose that the sequence $\{(x^k,y^k,d^k,v^k,\lambda^k,\mu^k):k\in\mathbb{N}\}$ is generated by Algorithm \ref{FSIPL-Line search-comp}. For $k\geq 1$, let \mbox{$S_{k-1}:=\max\left\{\overline{\tau}^2\|h(y^{k-1})\|,\|v^{k-1}\|^2,\|d^{k-1}\|^2\right\}$}, and for $k\geq 2$, let  
 \begin{equation}\label{def:nu-omega-xi}
        \begin{cases}
            {\begin{aligned}
    z^k:=&\mathcal{A}(x^{k-1})+v^{k-1},\\     
\upsilon^k_i:=&[h_i(x^k)]^{1/3},\\
s_j^k:=&[a_j(x^{k})-z_j^{k}]^{1/3},\\
\omega_k:=&\sqrt{\alpha\|h(x^k)\|+\frac{\ell_gL_\mathcal{A}}{2}\|d^{k-1}\|^2+\frac{\sigma}{4}S_{k-1}+\frac{\sigma}{8}S_{k-2}+\ell_g\ell_{\mathcal{A}}\|x^{k}-y^{k-1}\|}.
\end{aligned}}
        \end{cases} 
\end{equation} 
The following propositions demonstrate that the sequence generated by FSIPL satisfies the descent, relative error, and continuity conditions needed for Proposition~\ref{prop:kl-converge}, under the conditions of Lemma \ref{lem:Desc:Q} with 
\begin{equation}\label{settings-Delta}
\begin{aligned}
   t_k&\leq \min\{{\theta}/{(4\ell_f+2\ell_g)},~1/(L_f+\ell_gL_{\mathcal{A}}+\alpha L_h+\sigma)\},\\
        \Delta_{k}&\leq \min\{\Delta_{k-1},~\frac{\sigma}{8(\alpha+\ell_g+\lambda_{\max}+\mu_{\max})}S_{k-1},~\overline{\Delta} \},
         \end{aligned}
     \end{equation} 
     where $\overline{\Delta}=\frac{\theta\sqrt{C_1}}{2(\ell_{\mathcal{A}}+1+\sqrt{C_1})}$ and 
\begin{equation}\label{def-lambda-mu-max}
    \begin{aligned}
        \lambda_{\max}:= &\frac{1}{\sqrt{C_1}}\big((3+\ell_{\mathcal{A}})\ell_f+(3\ell_{\mathcal{A}}+1)\ell_g+(\frac{(\ell_{\mathcal{A}}+1)}{\sqrt{C_1}}+1)\frac{(\ell_{\mathcal{A}}+1)\overline{\Delta}}{\underline{t}}\big),\\
         \mu_{\max}:=& \ell_f+3\ell_g+(\frac{(\ell_{\mathcal{A}}+1)}{\sqrt{C_1}}+1)\frac{\overline{\Delta}}{\underline{t}}.
    \end{aligned}
\end{equation}

By direct calculation, the setting \eqref{settings-Delta} satisfies the condition that $\hat{\eta}_2(t_k,\Delta_k)\geq 1$ in Lemma \ref{lem:Desc:Q}. Moreover, as verified in Proposition \ref{prop:kl-cond-2},  it holds that $\{\Delta_k:k\in\mathbb{N}\}\in\mathcal{S}$. This is consistent with the requirement for $\{\Delta_k:k\in\mathbb{N}\}$ in Theorem \ref{the:complexity-comp} and Theorem \ref{the:convger-comp}.

 \begin{proposition}\label{prop:kl-cond-1}
     Let $\{(x^k,y^k,d^k,v^k,\lambda^k,\mu^k):k\in\mathbb{N}\}$ be generated by Algorithm \ref{FSIPL-Line search-comp} under the condition of Lemma \ref{lem:Desc:Q} with $t_k$ and  $\Delta_k$ satisfying \eqref{settings-Delta}.
     Let $P$ be defined by \eqref{def:P} and $(z^k,\upsilon^k,s^k,\omega_k)\in\mathbb{R}^{p}\times\mathbb{R}^m\times\mathbb{R}^p\times\mathbb{R}$ be defined as \eqref{def:nu-omega-xi}. Then, there exist $c_0>0$ and $c_2>0$ such that for all $k\geq 2$, $\|x^{k+1}-x^k\|\leq c_0\xi_{k+1}$ and 
     \begin{equation}\label{ineq-partial}
    \mathrm{dist}\bigl(0,\partial P(x^{k+1},z^{k+1},\upsilon^{k+1},s^{k+1},\omega_{k+1})\bigr)\leq c_2\xi_{k+1},
     \end{equation}  
     where $\xi_{k+1}=\sqrt{\|d^{k}\|^2+\|v^k\|^2+\|h(y^k)\|+S_{k-1}}$.
 \end{proposition}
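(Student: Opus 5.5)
The proof has two parts. The first is the step-length bound $\|x^{k+1}-x^k\|\le c_0\xi_{k+1}$. The second is the relative-error bound \eqref{ineq-partial}, obtained by writing an explicit element of $\partial P$ at the current point and bounding each block by the residual quantities in $\xi_{k+1}$.

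\textbf{Step-length bound.} Under the conditions of Lemma~\ref{lem:Desc:Q}, we have $\eta_k=1$, $\tau_k=\overline{\tau}$ and $(x^k,y^k)\in\mathcal{M}_\theta\times\mathcal{M}_{2\theta}$ for all $k\ge1$. Hence
\[
\|x^{k+1}-x^k\|\le\|d^k\|+\|x^{k+1}-y^k\|.
\]
In the correction case, \eqref{prf-(x-y)} gives $\|x^{k+1}-y^k\|\le\overline{\tau}\sqrt{C_2}\|h(y^k)\|$. Since $\|h(y^k)\|\le 2\sqrt{C_2}\theta$ on $\mathcal{M}_{2\theta}$, this is at most a constant times $\|h(y^k)\|^{1/2}$. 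In the projection case, \eqref{prf-(x-y)-2} gives $\|x^{k+1}-y^k\|\le\kappa\|h(y^k)\|$, which is bounded in the same way. Both $\|d^k\|$ and $\|h(y^k)\|^{1/2}$ are at most $\xi_{k+1}$, which yields $c_0$.

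\textbf{Subgradient element.} At the point $(x^{k+1},z^{k+1},\upsilon^{k+1},s^{k+1},\omega_{k+1})$, the function $P$ is a sum of separable pieces and the indicators of $\mathcal{C}$ and $\mathcal{D}$. Using the sum rule and the normal cone formulas (the gradients of the constraint maps have full row rank because of the $-I_p$ block and the cube-variable blocks), I would select the element with the multipliers chosen as $\lambda=\lambda^k$ and $\mu=\mu^k$:
\begin{align*}
&\Bigl(\nabla f(x^{k+1})+\nabla h(x^{k+1})^\top\lambda^k+\nabla\mathcal{A}(x^{k+1})^\top\mu^k,\ \ \zeta-\mu^k,\\
&\qquad -3\,\mathrm{diag}((\upsilon_i^{k+1})^2)\lambda^k,\ \ -3\,\mathrm{diag}((s_j^{k+1})^2)\mu^k,\ \ 2\omega_{k+1}\Bigr).
\end{align*}
Here $\zeta\in\partial g(z^{k+1})=\partial g(\mathcal{A}(x^k)+v^k)$ is taken to be $\mu^k-v^k/t_k$, which is admissible by \eqref{eq-optimal-d,v}. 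Each block is then bounded as follows.

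The first block equals $\nabla f(x^{k+1})-\nabla f(x^k)+(\nabla h(x^{k+1})-\nabla h(x^k))^\top\lambda^k+(\nabla\mathcal{A}(x^{k+1})-\nabla\mathcal{A}(x^k))^\top\mu^k-d^k/t_k$, again by \eqref{eq-optimal-d,v}. Lipschitz continuity of the gradients, the uniform bounds $\lambda_{\max},\mu_{\max}$ from Lemma~\ref{lemma:d^k-bound-comp}, $t_k\ge\underline{t}$, and the step-length bound show that this block is $O(\xi_{k+1})$.

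The second block equals $-v^k/t_k$, so it is also $O(\xi_{k+1})$.

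The third block is controlled by $|\upsilon_i^{k+1}|^2\le\|h(x^{k+1})\|^{2/3}$. Using \eqref{ineq:Desc:h(x)}, $\|h(x^{k+1})\|\le\|h(y^k)\|$ in both cases. A bare power $\|h(y^k)\|^{2/3}$ does not by itself compare with $\|h(y^k)\|^{1/2}$, so I would use boundedness on $\mathcal{M}_{2\theta}$: writing $\|h\|^{2/3}=\|h\|^{1/2}\|h\|^{1/6}$ and bounding the second factor by the uniform bound $2\sqrt{C_2}\theta$ gives $O(\xi_{k+1})$.

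The fourth block uses $|s_j^{k+1}|^2\le\|\mathcal{A}(x^{k+1})-\mathcal{A}(x^k)-v^k\|^{2/3}$. By \eqref{ineq:Ax-} this is at most $(\ell_{\mathcal{A}}\|x^{k+1}-y^k\|+\tfrac{L_{\mathcal{A}}}{2}\|d^k\|^2+\Delta_k)^{2/3}$. The argument is bounded (all iterates lie in compact sets), so the same power trick gives a bound by a constant times the square root of the argument. Here I need $\Delta_k^{1/2}\lesssim S_{k-1}^{1/2}$, which holds by \eqref{settings-Delta}, since $\Delta_k\le c\,S_{k-1}$ and $S_{k-1}^{1/2}\le\xi_{k+1}$. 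The term $\|x^{k+1}-y^k\|^{1/2}$ is handled as in the step-length bound.

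The fifth block is $2\omega_{k+1}$, where
\[
\omega_{k+1}^2=\alpha\|h(x^{k+1})\|+\tfrac{\ell_gL_{\mathcal{A}}}{2}\|d^k\|^2+\tfrac{\sigma}{4}S_k+\tfrac{\sigma}{8}S_{k-1}+\ell_g\ell_{\mathcal{A}}\|x^{k+1}-y^k\|.
\]
Each summand is at most a constant times $\xi_{k+1}^2$: $S_k\le\max\{\overline{\tau}^2,1\}(\|h(y^k)\|+\|v^k\|^2+\|d^k\|^2)$, and $\|x^{k+1}-y^k\|\lesssim\|h(y^k)\|$. Hence $\omega_{k+1}=O(\xi_{k+1})$. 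Collecting the five blocks gives $c_2$.

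\textbf{Main obstacle.} The main difficulty is justifying that the displayed vector actually lies in the limiting subdifferential $\partial P$, where $P$ is a sum of a nonsmooth $g(z)$ and two indicators of the graph-type sets. I would handle this by writing $\delta_{\mathcal{C}}+\delta_{\mathcal{D}}=\delta_{\mathcal{C}\times\cdots\cap\mathcal{D}\times\cdots}$ as the indicator of a single smooth manifold. The joint constraint Jacobian has full row rank because of the $-I_p$ block in $z$ and the block structure in $(\upsilon,s)$: even where $\upsilon$ or $s$ vanish, the $x$-rows $\nabla h$ have full rank on $\mathcal{M}_{2\theta}$ and the $z$-rows are $-I_p$. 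The sum rule for a convex function plus a smooth-manifold indicator (Rockafellar--Wets, Exercise~10.10 and Theorem~10.6) then applies, and the normal cones combine additively as stated.
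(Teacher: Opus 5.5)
Your proposal is correct and follows essentially the paper's route. You bound $\|x^{k+1}-x^k\|$ via $\|x^{k+1}-y^k\|\le\max\{\overline{\tau}\sqrt{C_2},\kappa\}\|h(y^k)\|$ together with $\|h(y^k)\|\le 2\sqrt{C_2}\theta$. You use the same element of $\partial P$, built from $(\lambda^k,\mu^k)$ and $\mu^k-v^k/t_k\in\partial g(z^{k+1})$. Your blockwise estimates are the paper's: Lipschitz gradients, the power trick on bounded residuals, \eqref{ineq:Ax-}, and $\Delta_k\le c\,S_{k-1}$ from \eqref{settings-Delta}. You also go further than the paper's unfinished proof: you complete the $z$-, $s$- and $\omega$-blocks, and you justify membership in $\partial P$ by the full row rank of the joint constraint Jacobian on $\mathcal{M}_{2\theta}$, which the paper only asserts.

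Two small citation fixes. First, $\|h(x^{k+1})\|\le\|h(y^k)\|$ comes from \eqref{ineq-lemma-h(x)} (with $h(x^{k+1})=0$ in the projection case), not from item \eqref{ineq:Desc:h(x)} of Proposition~\ref{Prop:hat_x,y}. Second, membership needs the inclusion $\partial\bigl(f(x)+g(z)+\omega^2\bigr)+N_{\mathcal{E}}\subseteq\partial P$, where $\mathcal{E}$ is the joint constraint manifold. That inclusion follows from regularity of both pieces and the Fr\'echet sum inclusion $\hat{\partial}\psi_1+\hat{\partial}\psi_2\subseteq\hat{\partial}(\psi_1+\psi_2)$, not from the upper inclusion of the Lipschitz sum rule.
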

\begin{proof}
Since the conditions of Lemma \ref{lem:Desc:Q} satisfy the conditions of Proposition \ref{prop:Desc:Phi-comp}, it comes from the proof of Proposition \ref{prop:Desc:Phi-comp}  that
\begin{equation}\label{prf:the:x-y}
    \|x^{k+1}-y^k\|\leq\max\{\hat{\tau}\sqrt{C_2},\kappa\}\|h(y^k)\|.
\end{equation}
Then, by the update of $y^{k}$ as \eqref{step2}, we obtain
\begin{equation}\label{prf-x-x-0}
    \begin{aligned}
   \|x^{k+1}-x^k\|&\leq\|d^k\|+\max\{\hat{\tau}\sqrt{C_2},\kappa\}\|h(y^k)\|\\
   &\leq \sqrt{2\|d^k\|^2+2\max\{\hat{\tau}^2C_2,\kappa^2\}\|h(y^k)\|^2}.
\end{aligned}
\end{equation}
As shown in Lemma \ref{lem:Desc:Q}, $y^k\in\mathcal{M}_{2\theta}$, then $\|h(y^k)\|\leq 2\sqrt{C_2}\theta$. This and \eqref{prf-x-x-0} yield that
\begin{equation}\label{prf-x-x}
  \|x^{k+1}-x^k\|\leq \sqrt{2\|d^k\|^2+4\max\{\hat{\tau}^2C_2,\kappa^2\}\sqrt{C_2}\theta\|h(y^k)\|}.
\end{equation}
Combining \eqref{prf-x-x} and the definition of $\xi_{k+1}$, we conclude that  there exists $c_0>0$ such that for all $k\geq 1$,  
\begin{equation}\label{ineq:x-xi}
    \|x^{k+1}-x^k\|\leq c_0\xi_{k+1}.
\end{equation} 
Next, we prove \eqref{ineq-partial}. Note that for any $\iota^{k+1}\in\partial g(z^{k+1})$, 
\begin{equation}\label{in:partial}
    \begin{aligned}
    \zeta^{k+1}:=\begin{bmatrix}
   \nabla f(x^{k+1})+\nabla h(x^{k+1})^{\top}\lambda^{k}+\nabla\mathcal{A}(x^{k+1})^{\top}\mu^k\\
   \iota^{k+1}-\mu^k \\
   -3\mathrm{diag}([{\upsilon^{k+1}_i}]^{2})\lambda^{k}\\
    -3\mathrm{diag}([{s^{k+1}_j}]^{2})\mu^{k}\\
   2\omega_{k+1}
\end{bmatrix}\in \partial P(x^{k+1},z^{k+1},\upsilon^{k+1},s^{k+1},\omega_{k+1}).
\end{aligned}
\end{equation}
Further, by the definition of $d^k$ in \eqref{def:d^k-comp} and \eqref{ineq:lambda^k-bound-comp}, \eqref{ineq:mu^k-bound-comp} in Lemma \ref{lemma:d^k-bound-comp}, we have 
\begin{equation}
    \begin{aligned}\label{ineq:sub-1}
  &\|\nabla f(x^{k+1})+\nabla h(x^{k+1})^{\top}\lambda^{k}+\nabla\mathcal{A}(x^{k+1})^{\top}\mu^k\|\\
  \leq &\frac{1}{\underline{t}}\|d^{k}\|+(L_f+L_h\lambda_{\max}+L_{\mathcal{A}}\mu_{\max})\|x^{k+1}-x^k\|\\
  \leq& (\frac{1}{\underline{t}}+(L_f+L_h\lambda_{\max}+L_{\mathcal{A}}\mu_{\max})c_0)\xi_{k+1},
\end{aligned}
\end{equation}
where the last inequality comes from the fact that $\|d^k\|\leq\xi_{k+1}$ and \eqref{ineq:x-xi}.
 And the definition of $v^k$ in \eqref{def:v^k-comp} implies that there exists $\iota^{k+1}\in \partial g(z^{k+1})$ such that
 \begin{equation}\label{ineq:sub-2}
 \end{equation} 
 Moreover, note that
 \begin{equation*}
\end{equation*}
 Since $x^k\in\mathcal{M}_{\theta}$ and $\|v^k\|$ is bounded by Lemma \ref{lemma:d^k-bound-comp} and Proposition \ref{prop:eta_min,tau_min-comp}, then $\|h(x^k)\|$ and $\|\mathcal{A}(x^{k+1})-z^{k+1}\|$ are bounded. Using the equivalence of norms, we further deduce that there exists $a_1>0$ such that
 
\begin{align}
    &\|\mathrm{diag}([\upsilon_i^{k+1}]^2)\lambda^k\|\leq \lambda_{\max}\|\mathrm{diag}([\upsilon_i^{k+1}]^2)\|\leq a_1\lambda_{\max}\sqrt{\|h(x^{k+1})\|}\leq a_1\lambda_{\max}\xi_{k+1},\label{ineq:sub-3}\\
     &\|\mathrm{diag}([s_j^{k+1}]^2)\mu^k\|\leq \mu_{\max}\|\mathrm{diag}([s_j^{k+1}]^2)\|\leq a_1\mu_{\max}\sqrt{\|\mathcal{A}(x^{k+1})-z^{k+1}\|}.\label{ineq:sub-4-1}
     \end{align}
 The last inequality in \eqref{ineq:sub-3} comes from the fact that $\|h(x^{k+1})\|\leq\|h(y^k)\|$ (by the proof of the item (i) of Proposition \ref{Prop:hat_x,y}). By \eqref{ineq:Ax-} and \eqref{ineq:sub-4-1}, we have
    \begin{equation*}
    \begin{aligned}
    \|\mathrm{diag}([s_j^{k+1}]^2)\mu^k\|&\leq a_1\mu_{\max}\sqrt{\ell_{\mathcal{A}}\|x^{k+1}-y^k\|+\frac{L_{\mathcal{A}}}{2}\|d^k\|^2+\Delta_k}\\
    &\leq a_1\mu_{\max}\sqrt{\ell_{\mathcal{A}}\max\{\hat{\tau}\sqrt{C_2},\kappa\}\|h(y^k)\|+\frac{L_{\mathcal{A}}}{2}\|d^k\|^2+\Delta_k},
     \end{aligned}
\end{equation*}
\end{proof}

\begin{proposition}\label{prop:kl-cond-2}
     Let $\{(x^k,y^k,d^k,v^k,\lambda^k,\mu^k):k\in\mathbb{N}\}$ be generated by Algorithm \ref{FSIPL-Line search-comp} under the conditions of Lemma \ref{lem:Desc:Q} with $t_k$ and  $\Delta_k$ satisfying \eqref{settings-Delta}.
     Let $P$ be defined by \eqref{def:P} and $(z^k,\upsilon^k,s^k,\omega_k)\in\mathbb{R}^{p}\times\mathbb{R}^m\times\mathbb{R}^p\times\mathbb{R}$ be defined as \eqref{def:nu-omega-xi}, for all $k\geq 2$.
     Then, the following three statements hold.
\begin{enumerate}
    \item[(i)] There exists  $c_1>0$ such that for all $k\geq 2$, 
    \begin{equation}\label{ineq:Desc:P-2}
        c_1\xi_{k+1}^2\leq P(x^{k},z^{k},\upsilon^k,s^{k},\omega_k)-P(x^{k+1},z^{k+1},\upsilon^{k+1},s^{k+1},\omega_{k+1}).
    \end{equation}
\item[(ii)] $\{S_k:k\in\mathbb{N}\}\in\mathcal{S}$ and $\{\Delta_k:k\in\mathbb{N}\}\in\mathcal{S}$.
\item[(iii)] Let $\Lambda$ be the set  of  cluster points of the sequence $(x^{k},z^{k},\upsilon^k,s^{k},\omega_k)$. Then, the limit $P_{\infty}:=\lim_{k\to\infty} P(x^{k},z^{k},\upsilon^k,s^{k},\omega_k)$ exists and for all $(x^{*},z^{*},\upsilon^*,s^{*},\omega_*) \in\Lambda$, $P(x^{*},z^{*},\upsilon^*,s^{*},\omega_*)=P_{\infty}$.
\end{enumerate}   
 \end{proposition}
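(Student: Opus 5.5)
The plan is to evaluate $P$ along the extended sequence and compare it with the quantity $Q_k$ of Lemma~\ref{lem:Desc:Q}. By construction, $(x^k,\upsilon^k)\in\mathcal{C}$ and $(x^k,z^k,s^k)\in\mathcal{D}$ for $k\ge 2$. Hence both indicator terms vanish, and
\[
P(x^k,z^k,\upsilon^k,s^k,\omega_k)=f(x^k)+g(\mathcal{A}(x^{k-1})+v^{k-1})+\omega_k^2
= Q_k+\tfrac{\sigma}{4}S_{k-1}+\tfrac{\sigma}{8}S_{k-2}.
\]
Writing $P_k$ for this value, we get $P_{k+1}-P_k=Q_{k+1}-Q_k+\frac{\sigma}{4}S_k-\frac{\sigma}{8}S_{k-1}-\frac{\sigma}{8}S_{k-2}$. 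We then insert \eqref{Desc:Q}. Under the setting of Lemma~\ref{lem:Desc:Q}, $\eta_k=1$ and $\tau_k=\overline{\tau}$, and $\|\lambda^k\|\le\lambda_{\max}$, $\|\mu^k\|\le\mu_{\max}$ by Lemma~\ref{lemma:d^k-bound-comp}.

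The key bookkeeping step is to absorb the error terms using \eqref{settings-Delta}. This gives $(\alpha+\ell_g+\lambda_{\max}+\mu_{\max})\Delta_k\le \frac{\sigma}{8}S_{k-1}$. Since $\Delta_k\le\Delta_{k-1}$, the two error terms together are bounded by this quantity, which we split as $\frac{\sigma}{8}S_{k-1}$ for the $\Delta_k$ part and $\ell_g\Delta_{k-1}\le\frac{\sigma}{8}S_{k-2}$ for the other (the latter bound comes from the same rule at index $k-1$).

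Next we use $\frac{\sigma}{2}(\|d^k\|^2+\overline{\tau}^2\|h(y^k)\|)+\frac{1}{2t_k}\|v^k\|^2\ge c\,S_k$ together with a residual $c'(\|d^k\|^2+\|v^k\|^2+\|h(y^k)\|)$. Here $t_k\le\overline{t}$, and I will arrange $\frac{1}{2\overline{t}}\ge\sigma$, or simply take the minimum of the constants. After subtracting $\frac{\sigma}{4}S_k$ there is still a positive multiple of $\|d^k\|^2+\|v^k\|^2+\|h(y^k)\|$ left. The leftover $-\frac{\sigma}{8}S_{k-1}$ from the $\frac{\sigma}{8}$-weights on the previous levels supplies the $S_{k-1}$ component of $\xi_{k+1}^2$. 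Some care with the weights $\frac{\sigma}{4}$ and $\frac{\sigma}{8}$ is needed so that each $S$ appears with net negative coefficient. This weight balancing is the main obstacle. If the constants do not close as written, I would instead use the crude bound $S_k\le\|d^k\|^2+\|v^k\|^2+\overline{\tau}^2\|h(y^k)\|$ and shrink the coefficient of $\frac{\sigma}{4}S_{k-1}$ relative to $\frac{\sigma}{2}$. The result is (i) with $c_1=\min\{\cdot\}>0$.

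For (ii), sum (i) over $k\ge2$. $P_k$ is bounded below because $\{x^k\}\subset\mathcal{M}_\theta$ is compact, $z^k$ is bounded by Lemma~\ref{lemma:d^k-bound-comp}, and $g$ is continuous. Hence $\sum\xi_{k+1}^2<\infty$, which gives $\sum S_k<\infty$. Then $\Delta_k\le\frac{\sigma}{8(\cdots)}S_{k-1}$ yields $\sum\Delta_k<\infty$.

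For (iii), $P_k$ is nonincreasing for $k\ge2$ and bounded below, so $P_\infty$ exists. At a cluster point, taken along a subsequence, we have $d^k,v^k\to0$, $h(x^k)\to0$, $\|h(y^k)\|\to0$, and $S_k\to0$. By \eqref{prf:the:x-y}, $\|x^{k}-y^{k-1}\|\to0$, so $\omega_k\to0=\omega_*$. Moreover $z^k-\mathcal{A}(x^k)\to0$ by \eqref{ineq:Ax-}, so $\upsilon^*=0$, $s^*=0$, and $z^*=\mathcal{A}(x^*)$. The limit point therefore lies in $\mathcal{C}$ and $\mathcal{D}$, both of which are closed. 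Continuity of $f$ and $g$ then gives $P(x^*,z^*,\upsilon^*,s^*,\omega_*)=\lim P_k=P_\infty$.
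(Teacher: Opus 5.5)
Your skeleton matches the paper's: the identity $P_k=Q_k+\frac{\sigma}{4}S_{k-1}+\frac{\sigma}{8}S_{k-2}$, insertion of \eqref{Desc:Q}, summation for (ii), and closedness of $\mathcal{C},\mathcal{D}$ with continuity of $f,g$ for (iii). Your arguments for (ii) and (iii) are fine. The gap is in the error absorption for (i), which is the only nontrivial step. By your own identity,
\[
P_k-P_{k+1}=(Q_k-Q_{k+1})-\tfrac{\sigma}{4}S_k+\tfrac{\sigma}{8}S_{k-1}+\tfrac{\sigma}{8}S_{k-2}.
\]
So the budget $\frac{\sigma}{8}S_{k-1}+\frac{\sigma}{8}S_{k-2}$ must pay for two things: the error $E_k:=(\alpha+\|\lambda^k\|+\|\mu^k\|)\Delta_k+\ell_g\Delta_{k-1}$ and the $S_{k-1}$ component of $\xi_{k+1}^2$. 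You charge the $\Delta_k$ part to $\frac{\sigma}{8}S_{k-1}$ and the $\ell_g\Delta_{k-1}$ part to $\frac{\sigma}{8}S_{k-2}$. That exhausts the budget, yet you then spend ``the leftover $-\frac{\sigma}{8}S_{k-1}$'' a second time, which is double counting. As written you only get $P_k-P_{k+1}\ge\frac{\sigma}{4}(\|d^k\|^2+\|v^k\|^2+\overline{\tau}^2\|h(y^k)\|)$. This says nothing about $S_{k-1}$, so \eqref{ineq:Desc:P-2} does not follow. Your sentence that $\Delta_k\le\Delta_{k-1}$ bounds both error terms by $\frac{\sigma}{8}S_{k-1}$ also uses the monotonicity in the wrong direction.

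Your fallback is not available either. The weights $\frac{\sigma}{4},\frac{\sigma}{8}$ are fixed by \eqref{def:nu-omega-xi}, and the factor $\frac{\sigma}{8(\cdot)}$ is fixed by \eqref{settings-Delta}. The only source of $S_{k-1}$ slack is the drop of its weight from $\frac{\sigma}{4}$ to $\frac{\sigma}{8}$, so shrinking $\frac{\sigma}{4}$ would hurt rather than help. The crude bound on $S_k$ only handles the $-\frac{\sigma}{4}S_k$ term.

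The missing step is to use $\Delta_k\le\Delta_{k-1}$ in the correct direction and push the whole error onto the older index:
\[
E_k\le(\alpha+\ell_g+\lambda_{\max}+\mu_{\max})\Delta_{k-1}\le\tfrac{\sigma}{8}S_{k-2},
\]
using \eqref{settings-Delta} at index $k-1$. This is paid exactly by the $\frac{\sigma}{8}S_{k-2}$ that disappears between $P_k$ and $P_{k+1}$, and the full $\frac{\sigma}{8}S_{k-1}$ survives:
\[
P_k-P_{k+1}\ge\tfrac{\sigma}{4}\bigl(\|d^k\|^2+\|v^k\|^2+\overline{\tau}^2\|h(y^k)\|\bigr)+\tfrac{\sigma}{8}S_{k-1}.
\]
Hence (i) holds with $c_1=\min\{\frac{\sigma}{8},\frac{\sigma\overline{\tau}^2}{4}\}$. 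Your split could also be rescued, but not as you wrote it. The $\Delta_k$ coefficient in \eqref{Desc:Q} does not contain $\ell_g$, so when $\ell_g>0$ the split leaves a sliver $\frac{\sigma\ell_g}{8(\alpha+\ell_g+\lambda_{\max}+\mu_{\max})}S_{k-1}$, at the cost of a worse constant.

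One smaller point concerns the $\|v^k\|^2$ term. Do not ``arrange'' $\frac{1}{2\overline{t}}\ge\sigma$ or just take a minimum of constants. If $\frac{1}{2t_k}\le\frac{\sigma}{4}$, nothing of $\|v^k\|^2$ survives subtracting $\frac{\sigma}{4}S_k$ when $S_k=\|v^k\|^2$. The needed bound is already in \eqref{settings-Delta}: it gives $t_k\le 1/\sigma$, hence $\frac{1}{2t_k}\ge\frac{\sigma}{2}$.
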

\begin{proof}
We first prove Item $(i)$. 
  Combining \eqref{def-Q}, \eqref{def:P}, \eqref{def:C}, \eqref{def:D} and \eqref{def:nu-omega-xi}, we have
\begin{equation}\label{P-exp}
P(x^{k},z^{k},\upsilon^k,s^{k},\omega_k)=Q_k+\frac{\sigma}{4}S_{k-1}+\frac{\sigma}{8}S_{k-2}.
\end{equation}
Note that the settings of $t_k$ and  $\Delta_{k-1}$ in \eqref{settings-Delta} satisfying the condition that $\hat{\eta}_2(t_k,\Delta_k)\geq 1$ in Lemma \ref{lem:Desc:Q}, then by \eqref{Desc:Q} in Lemma \ref{lem:Desc:Q} and the definition of $S_k$, we deduce that
\begin{equation*}
\begin{aligned}
           &Q_k+\frac{\sigma}{8}S_{k-2}  
         \geq Q_{k+1}+\frac{\sigma}{2}(\|d^{k}\|^2+\overline{\tau}^2\|h(y^k)\|+\|v^k\|^2).
         \end{aligned}
\end{equation*}
This together with \eqref{P-exp} and the definition of $S_k$ yields that
\begin{equation}\label{ineq-desc-P}
\begin{aligned}
    &P(x^{k},z^{k},\upsilon^k,s^{k},\omega_k)-P(x^{k+1},z^{k+1},\upsilon^{k+1},s^{k+1},\omega_{k+1})\\
    \geq& \frac{\sigma}{4}\bigl(\|d^k\|^2+\|v^k\|^2+\overline{\tau}^2\|h(y^k)\|\bigr)+\frac{\sigma}{8}S_{k-1}.
    \end{aligned}
\end{equation}
Leveraging this with the definition of $\xi_{k+1}$, we deduce that  there exists $c_1>0$ such that \eqref{ineq:Desc:P-2} holds for all $k\geq 2$.

Next, we prove item $(ii)$. By Proposition~\ref{prop:eta_min,tau_min-comp} and setting \eqref{settings-Delta}, we have $x^k\in\mathcal M_\theta$ for all $k\ge0$. Hence $\{x^k\}$ is bounded. By Lemma~\ref{lemma:d^k-bound-comp}, the sequences $\{d^k\}$,
$\{v^k\}$, $\{\lambda^k\}$, and $\{\mu^k\}$ are bounded. Together with the
continuity of $\mathcal A$ and $h$, and the definitions of
$(z^k,\upsilon^k,s^k,\omega_k)$, this implies that $\{(x^k,z^k,\upsilon^k,s^k,\omega_k):k\ge2\}$ is bounded. Moreover, by construction, $(x^k,\upsilon^k)\in \mathcal C,~(x^k,z^k,s^k)\in \mathcal D$ and hence
\[
    P(x^k,z^k,\upsilon^k,s^k,\omega_k)
    =
    f(x^k)+g(z^k)+\omega_k^2 .
\]
Since $\{x^k\}$ and $\{z^k\}$ are bounded, $f$ is continuous, and $g$ is
Lipschitz continuous, the sequence $\{P(x^k,z^k,\upsilon^k,s^k,\omega_k):k\ge2\}$ is bounded from below. Summing the descent inequality
\eqref{ineq-desc-P} from $k=2$ to $N$ gives
\[
\begin{aligned}
    \frac{\sigma}{8}\sum_{k=2}^{N}S_{k-1}
    \le\;&
    \sum_{k=2}^{N}
    \Bigl[
    P(x^k,z^k,\upsilon^k,s^k,\omega_k)
    -
    P(x^{k+1},z^{k+1},\upsilon^{k+1},s^{k+1},\omega_{k+1})
    \Bigr]  \\
    =\;&
    P(x^2,z^2,\upsilon^2,s^2,\omega^2)
    -
    P(x^{N+1},z^{N+1},\upsilon^{N+1},s^{N+1},\omega_{N+1}).
\end{aligned}
\]
The right-hand side is bounded above uniformly in $N$ by the lower
boundedness just proved. Letting $N\to\infty$, we obtain $\sum_{k=1}^{\infty}S_k<\infty$. Thus $\{S_k:k\in\mathbb N\}\in\mathcal S$, and item  $(ii)$ is proved.

We now prove item  $(iii)$. The boundedness established in the proof of item $(ii)$ implies that the cluster point set $\Lambda$ is nonempty and compact. Moreover, by the lower boundedness proved above and the descent inequality \eqref{ineq-desc-P}, the sequence
    $\{P(x^k,z^k,\upsilon^k,s^k,\omega_k):k\ge2\}$
is nonincreasing and bounded from below. Therefore, the limit
\[
    P_\infty
    :=
    \lim_{k\to\infty}P(x^k,z^k,\upsilon^k,s^k,\omega_k)
\]
exists. It remains to show that $P$ is constant on $\Lambda$. Let
$(x^*,z^*,\upsilon^*,s^*,\omega_*)\in\Lambda$. Then there exists a subsequence
$\{k_j\}$ such that
\[
    (x^{k_j},z^{k_j},\upsilon^{k_j},s^{k_j},\omega_{k_j})
    \to
    (x^*,z^*,\upsilon^*,s^*,\omega_*).
\]
Since
\[
    (x^{k_j},\upsilon^{k_j})\in \mathcal C,\qquad
    (x^{k_j},z^{k_j},s^{k_j})\in \mathcal D
\]
for all $j$, and since $\mathcal C$ and $\mathcal D$ are closed, we have
\[
    (x^*,\upsilon^*)\in \mathcal C,\qquad
    (x^*,z^*,s^*)\in \mathcal D.
\]
Thus $(x^*,z^*,\upsilon^*,s^*,\omega_*)\in\operatorname{dom}P$. Along the subsequence $\{k_j\}$, it holds that
\[
    P(x^{k_j},z^{k_j},\upsilon^{k_j},s^{k_j},\omega_{k_j})
    =
    f(x^{k_j})+g(z^{k_j})+\omega_{k_j}^2 .
\]
Passing to the limit and using the continuity of $f$ and $g$, we obtain
\[
\begin{aligned}
    P(x^*,z^*,\upsilon^*,s^*,\omega_*)
    &=
    f(x^*)+g(z^*)+\omega_*^2  \\
    &=
    \lim_{j\to\infty}
    P(x^{k_j},z^{k_j},\upsilon^{k_j},s^{k_j},\omega_{k_j})
    =
    P_\infty .
\end{aligned}
\]
Since $(x^*,z^*,\upsilon^*,s^*,\omega_*)\in\Lambda$ was arbitrary, $P$ is constant on $\Lambda$. This proves item $(iii)$.
\end{proof}

Proposition \ref{prop:kl-cond-1} verifies the relative-error condition for the auxiliary function $P$, while Proposition \ref{prop:kl-cond-2} establishes the descent condition and continuity property on the cluster set. Therefore, all assumptions of Proposition \ref{prop:kl-converge} are satisfied, and we obtain the following full-sequence convergence result.
\begin{theorem}\label{conv-KL}
      Let $\{(x^k,y^k):k\in\mathbb{N}\}$ be generated by Algorithm \ref{FSIPL-Line search-comp} under the conditions of Lemma \ref{lem:Desc:Q} with $t_k$ and  $\Delta_k$ satisfying \eqref{settings-Delta}. If $P$ defined in \eqref{def:P} satisfies the KL property, then both sequences $\{x^k:k\in\mathbb{N}\}$ and $\{y^k:k\in\mathbb{N}\}$  converge to a stationary point of problem \eqref{prob:primal}.  
\end{theorem}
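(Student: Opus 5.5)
The plan is to apply Proposition~\ref{prop:kl-converge} with $u^k:=x^k$, $\upsilon^k:=(z^k,\upsilon^k,s^k,\omega_k)$, $\psi:=P$, and $\xi_{k+1}$ as in Proposition~\ref{prop:kl-cond-1}. First we check that the extended sequence is bounded. By the setting \eqref{settings-Delta}, Proposition~\ref{Prop:hat_x,y}\eqref{x,y_range-comp} and Proposition~\ref{prop:eta_min,tau_min-comp}, every $x^k$ lies in $\mathcal{M}_\theta$. Lemma~\ref{lemma:d^k-bound-comp} then bounds $d^k,v^k,\lambda^k,\mu^k$, and with it $z^k,\upsilon^k,s^k,\omega_k$; this is also recorded in the proof of Proposition~\ref{prop:kl-cond-2}(ii).

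Conditions (a)--(c) of Proposition~\ref{prop:kl-converge} then follow from earlier results. Condition (a) combines the bound $\|x^{k+1}-x^k\|\le c_0\xi_{k+1}$ from Proposition~\ref{prop:kl-cond-1} with the descent estimate of Proposition~\ref{prop:kl-cond-2}(i). Condition (b) is \eqref{ineq-partial}, and condition (c) is Proposition~\ref{prop:kl-cond-2}(iii). Since $P$ is assumed to have the KL property, it has it at every point of $\Lambda\subseteq\mathrm{dom}\,\partial P$. Proposition~\ref{prop:kl-converge} therefore gives $\sum_k\|x^{k+1}-x^k\|<\infty$. Hence $\{x^k\}$ is Cauchy and converges to some $x^*$.

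For $\{y^k\}$, write $y^k=x^k+\eta_kd^k=x^k+d^k$, using $\eta_k=1$ from Lemma~\ref{lem:Desc:Q}. By Proposition~\ref{prop:kl-converge} we also have $\sum_k\xi_k<\infty$, and $\|d^k\|\le\xi_{k+1}$, so $d^k\to0$ and $y^k\to x^*$. The same conclusion follows from Theorem~\ref{the:convger-comp}, which gives $d^k\to 0$ since $\{\Delta_k\}\in\mathcal{S}$ by Proposition~\ref{prop:kl-cond-2}(ii).

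It remains to show that $x^*$ is stationary. We would not try to read this off from $0\in\partial P$, since $\partial P$ involves the auxiliary variables. Instead we use Theorem~\ref{the:convger-comp}: its hypotheses hold because $\alpha$ satisfies \eqref{def-alpha} and $\{\Delta_k\}\in\mathcal{S}$. So every cluster point of $\{x^k\}$ is stationary, and since the whole sequence converges to $x^*$, the point $x^*$ is stationary, i.e.\ $h(x^*)=0$ and \eqref{KKT} holds for some $\lambda^*$.

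The main point needing care is index bookkeeping. The propositions hold only for $k\ge2$, which is harmless because finitely many initial terms do not affect summability or limits. We also need the requirement $\hat\eta_2(t_k,\Delta_k)\ge1$, which the remark after \eqref{settings-Delta} guarantees. With these in place, the proof simply assembles the earlier results.
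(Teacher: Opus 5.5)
Your proposal is correct and matches the paper's proof. Both apply Proposition~\ref{prop:kl-converge} to $P$ along the extended sequence, with conditions (a)--(c) supplied by Propositions~\ref{prop:kl-cond-1} and~\ref{prop:kl-cond-2}, obtain $\sum_k\xi_k<\infty$ and $\sum_k\|x^{k+1}-x^k\|<\infty$, deduce $y^k\to x^*$ from $d^k\to0$, and conclude stationarity of $x^*$ from Theorem~\ref{the:convger-comp} using $\{\Delta_k\}\in\mathcal{S}$. Your extra remark $\Lambda\subseteq\mathrm{dom}\,\partial P$ is asserted without proof, but it follows at once from condition (b), condition (c) and the closedness of the limiting subdifferential, since the subgradients bounded by $c_2\xi_{k+1}$ have convergent subsequences.
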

\begin{proof}
Invoking Proposition \ref{prop:kl-converge}, Proposition \ref{prop:kl-cond-1} and Proposition \ref{prop:kl-cond-2}, we deduce that $\sum_{k=0}^{\infty}\xi_k<\infty$. By the definition of $\xi_{k+1}$ and \eqref{ineq:x-xi}, it holds that
\begin{equation}\label{ineq-seq-conv-1}
   \lim_{k\to\infty} v^k=0, ~~\lim_{k\to\infty} d^k=0,~~\lim_{k\to\infty} h(y^k)=0,~~\sum_{k=0}^{\infty} \|x^{k+1}-x^k\|<\infty.
\end{equation}
This together with \eqref{step2} yields that 
\begin{equation}\label{ineq-seq-conv-2}
    x^*:=\lim_{k\to\infty} x^k=\lim_{k\to\infty} y^k\in\mathcal{M}.
\end{equation}
Since $\{\Delta_k:k\in\mathbb{N}\}\in\mathcal{S}$ by Proposition \ref{prop:kl-cond-2} and Lemma \ref{lem:Desc:Q}, it follows from Theorem \ref{the:convger-comp} that $x^*$ is a stationary point of problem~\eqref{prob:primal}.
\end{proof}
We remark that as pointed out in \citet{attouch2010proximal}, all proper semialgebraic functions satisfy the KL property. Consequently, Theorem \ref{conv-KL} is applicable when the involved functions $f$, $g$, $\mathcal{A}$ and $h$ are semialgebraic, which implies that $P$ is semialgebraic. Indeed, the objective functions are semialgebraic in a wide range of sparse optimization problems,
including the problems \eqref{SPCA} and \eqref{SSC}.

\section{Numerical experiments}\label{sec:numerical}
In this section, we demonstrate the efficiency of the proposed FSIPL algorithm by applying it to the SPCA problem \eqref{SPCA} and the SSC problem \eqref{SSC}. All the experiments are implemented in MATLAB R2025a and conducted on a standard PC with 3.40GHz Intel(R) Core(TM) i5-7500 CPU and 24GB of RAM.

We first specify some implementation details of the proposed algorithm. We fix $\gamma=0.5$, $(\underline{t},\overline{t},\overline{\Delta},\overline{\eta},\overline{\tau},\sigma)=(10^{-3},10^{5},0.5,1,1,2)$ and $\theta=0.3$ throughout the tests.
In view of \eqref{append-Stiefel}, it is not hard to obtain that $M_1/C_1=\frac{(1+2\theta)}{2(1-2\theta)^2}(\ell_f+\ell_g\ell_{\mathcal{A}})$, $M_2=\ell_f+\ell_g\ell_{\mathcal{A}}$, where $M_1>0$ and $M_2 > 0$ are defined in Proposition \ref{prop:Desc:Phi-comp}. Combining these with the aforementioned choices of $\sigma$, $\theta$ and the facts that $\hat{\tau}\leq \overline{\tau}=1$, we deduce that \eqref{def-alpha} holds if $\alpha>\max\left\{5(\ell_f+\ell_g\ell_{\mathcal{A}}), \ell_f+\ell_g\ell_{\mathcal{A}}+\frac{1}{2} \right\}$. Consequently, we choose $\alpha=\max\{6(\ell_f+\ell_g\ell_{\mathcal{A}}),\ell_f+\ell_g\ell_{\mathcal{A}}+1\}$ for both problems \eqref{SPCA} and \eqref{SSC}. It is easy to verify that $\ell_f=2(1+\theta)\|B\|^2_F$, $\ell_g=\mu \sqrt{np}$ and $\ell_{\mathcal{A}}=1$ for problem \eqref{SPCA}, while $\ell_f=2(1+\theta)\|L\|_F$, $\ell_g=\mu n$ and $\ell_{\mathcal{A}}=2(1+\theta)$ for problem \eqref{SSC}. 
Finally, we set $\rho_0=15p\alpha$ and $\Delta_0=0.5$, and for $k\geq 1$,
\[\rho_k=\frac{15p\alpha}{k^{1.01}},\quad\Delta_k=\min\{c_1\frac{\|d^{k-1}\|}{t_{k-1}},\frac{c_2}{k^{c_3}},\overline{\Delta}\}, \]
where $c_1>0,~c_2>0$ and $c_3>1$ vary across the two problems. For proximal parameter $t_k$, motivated by the Barzilai-Borwein (BB) stepsize (\citet{iannazzo2018riemannian,2013AWen,gao2018new}), we adopt $t_0=1/L_f$ and for $k\geq 1$, 
\begin{equation}\label{t_k}
    t_k=
    \min\left\{\max\left\{\underline{t},\left|\frac{\langle x^{k}-x^{k-1}, \Delta R^k \rangle}{\|x^{k}-x^{k-1}\|^2}\right|\right\},\overline{t}\right\},
\end{equation}
where \[\Delta R^k=\mathrm{grad} f(x^k)-\mathrm{grad}f(x^{k-1})+\bigl(\nabla h(x^k)-\nabla h(x^{k-1})\bigr)^{\top}(\lambda^{k-1}-\lambda^{k-2})\] with $\mathrm{grad} f(x^k)=\bigl(I_n-\frac{1}{4}\nabla h(x^k)^{\top}\nabla h(x^k)\bigr)\nabla f(x^k)$ and $\lambda^{-1}=0$.

\subsection{Experiments on SPCA}\label{SPCA-exp}
In this subsection, we compare the proposed FSIPL method with the ARPG algorithm in \citet{huang2022riemannian} and the SLPG algorithm in \citet{liu2024penalty} for solving the SPCA problem \eqref{SPCA}.  

In the tests, we generate synthetic datasets as in \citet{chenProximalGradientMethod2020} with settings $m=50$ and $n=2000$. For FSIPL, we set $c_1=c_2=p^2$, $c_3=1.01$, and terminate the algorithm when 
\begin{equation}\label{terminate}
    Res(x^k,v^k,\lambda^k) < \varepsilon
\end{equation}
with $\varepsilon=\min\{10^{-4},10^{-8}np\}$.
The parameters of ARPG and SLPG  are the same as those in \citet{huang2022riemannian} and \citet{liu2024penalty} respectively. Both of the  two algorithms are terminated when their respective stopping conditions are satisfied with the same  $\varepsilon$. Besides, we start all the algorithms from the same random initial points on $\mathrm{St}(n,p)$, and set their maximum iteration number as 5000. As suggested in \citet{liu2024penalty}, for FSIPL and SLPG, we perform a postprocessing step by projecting the final iterate onto $\mathcal{M}$.

 We present the average results over 20 runs on different settings of $(p,\mu)$ in Table \ref{tab:SPCA-1}, where the objective values (Obj.), the CPU time in seconds (Time) and the number of iterations (Iter.) are reported for all three algorithms. For FSIPL, we also report the total number of projection steps (Proj.) that take place in \eqref{update-x}. It can be  observed that all the three algorithms achieve similar objective values, while the proposed FSIPL method takes much less CPU time. In addition, the FSIPL method only performs a very small number of projection steps in all cases. Furthermore, for FSIPL we plot the cumulative number of projection steps versus iteration number for different choices of $p$ and $\mu$ in Figure \ref{SPCA-fig}. We observe that the projection steps do not take place after a finite number of iterations. This observation is consistent with the finite-activation behavior suggested by the convergence analysis.
\begin{table}[htbp]
\centering
\caption{Results for SPCA}
\label{tab:SPCA-1}
\setlength{\tabcolsep}{2.2pt}
\small
\begin{tabular}{
 c
 S[table-format=-3.3]
 S[table-format=1.3]
 S[table-format=4.0]
 S[table-format=3.0]
 S[table-format=-3.3]
 S[table-format=2.3]
 S[table-format=4.0]
 S[table-format=-3.3]
 S[table-format=2.3]
 S[table-format=4.0]
}
\toprule
\multirow{2}{*}{\makecell{Parameters \\($p$, $\mu$)}} &
\multicolumn{4}{c}{{FSIPL}} &
\multicolumn{3}{c}{{SLPG}} &
\multicolumn{3}{c}{{ARPG}} \\
\cmidrule(lr){2-5} \cmidrule(lr){6-8} \cmidrule(lr){9-11}
& {Obj.} & {Time} & {Iter.} & {Proj.} & {Obj.} & {Time} & {Iter.} & {Obj.} & {Time} & {Iter.} \\
\midrule
(20, 0.2) & -340.525 & \textbf{0.987} & 1044 & 40 & -340.444 & 2.052 &  992 & -340.486 & 4.064 &  950 \\
(20, 0.3) & -279.665 & \textbf{0.722} &  777 & 38 & -279.615 & 1.779 &  791 & -279.638 & 4.093 &  845 \\
(20, 0.4) & -222.526 & \textbf{0.815} &  816 & 36 & -220.038 & 1.721 &  699 & -222.447 & 5.285 &  901 \\
(20, 0.5) & -168.554 & \textbf{0.892} &  867 & 30 & -166.769 & 2.057 &  730 & -168.551 & 5.305 &  820 \\
(20, 0.6) & -118.961 & \textbf{0.669} &  729 & 28 & -118.992 & 1.780 &  588 & -119.104 & 5.036 &  766 \\
\midrule
(5,  0.5)  &  -50.102 & \textbf{0.098} &  265 & 15 &  -50.079 & 0.234 &  220 &  -50.151 & 0.432 &  344 \\
(15, 0.5)  & -133.835 & \textbf{0.452} &  611 & 26 & -133.721 & 1.150 &  545 & -133.797 & 2.919 &  683 \\
(25, 0.5)  & -200.223 & \textbf{1.126} &  933 & 34 & -199.904 & 2.758 &  827 & -200.234 & 8.282 &  862 \\
(35, 0.5)  & -250.861 & \textbf{3.161} & 1592 & 47 & -250.765 & 6.666 & 1200 & -250.879 & 19.347 & 1292 \\
(45, 0.5)  & -286.973 & \textbf{5.494} & 1918 & 49 & -287.026 & 13.711 & 1644 & -286.901 & 28.624 & 1347 \\
\bottomrule
\end{tabular}
\end{table}
 
\begin{figure}[htbp]
    \centering
    \begin{subfigure}[b]{0.48\textwidth}
        \centering
         \includegraphics[width=\linewidth]{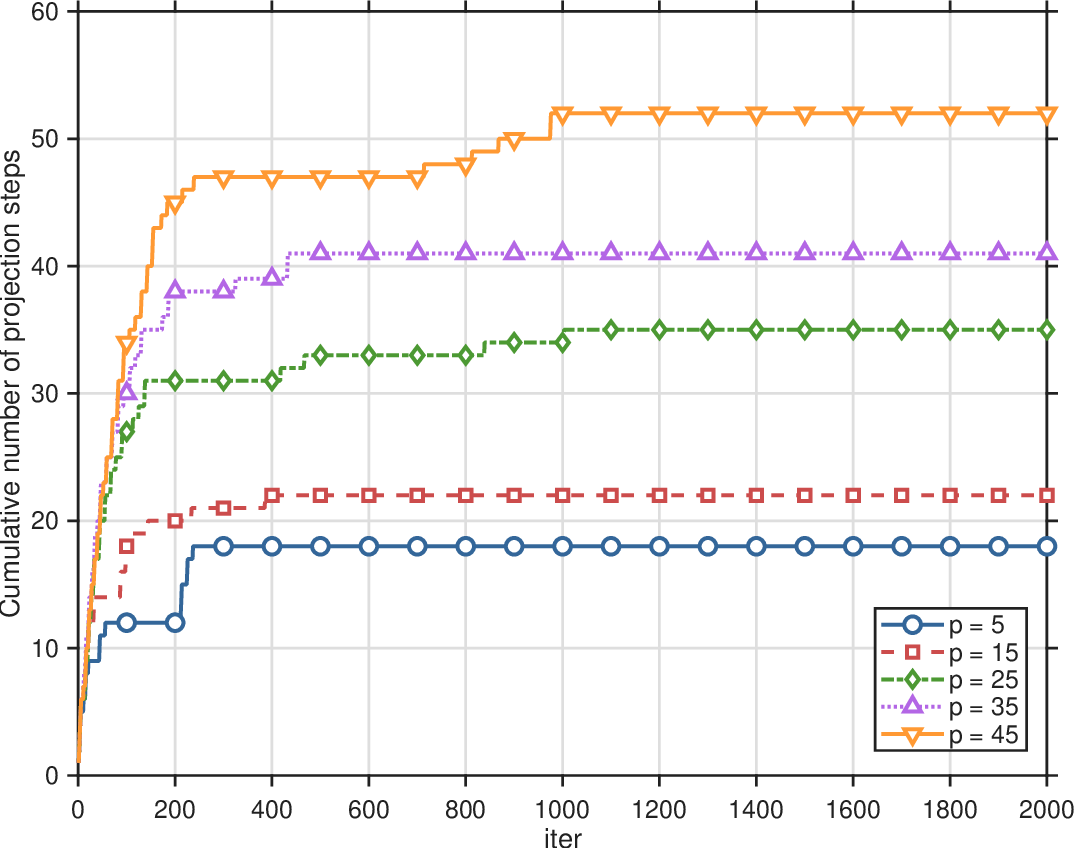}
          \captionsetup{font=scriptsize}
    \caption{$\mu=0.5$.}
    \label{SPCA-fig1}
    \end{subfigure}
    \hfill
    \begin{subfigure}[b]{0.48\textwidth}
        \centering
        \includegraphics[width=\linewidth]{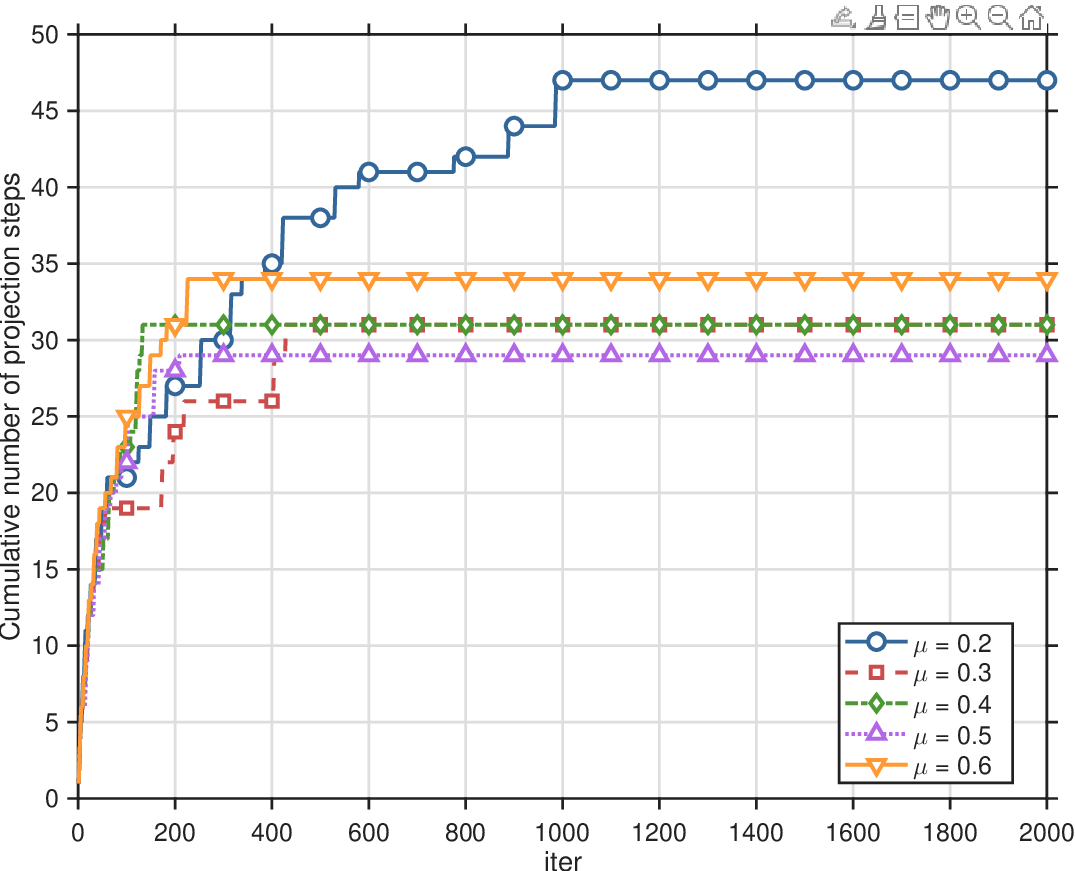} 
         \captionsetup{font=scriptsize}
    \caption{$p=20$.}
    \label{SPCA-fig2}
    \end{subfigure}
    \caption{Cumulative number of projection steps vs iteration number of FSIPL.}\label{SPCA-fig}
\end{figure}

\subsection{Experiments on SSC}\label{SSC-exp}
In this subsection, we compare the proposed FSIPL method with the RADA-PGD algorithm in  \citet{xu2026riemannian} and the MPGDA algorithm in \citet{xie2025proximal} for solving the SSC problem \eqref{SSC}. Note that both RADA-PGD and MPGDA algorithms are applied to some equivalent min-max reformulation rather than \eqref{SSC} itself. 

In the tests, we generate synthetic datasets as in \citet{2016ConvexSparseSpectralClustering}, where the data points $\{a_i\}_{i=1}^N$ are independently drawn from the standard Gaussian distribution and $W_{ij}=|\langle a_i,a_j\rangle|$. For FSIPL, we set $c_1 = 1/p$, $c_2 = p$ and $c_3 = 1.1$. For the RADA-PGD and MPGDA algorithms, we adopt the parameters as suggested in \citet[Section~6.3]{xie2025proximal}. The initial point of RADA-PGD is chosen as $X_0 X_0^T$, where $X_0$ consists of $p$ eigenvectors associated with $p$ smallest eigenvalues of $L$. We also use the above $X_0$ as the initial points of MPGDA and FSIPL. The FSIPL algorithm is terminated when \eqref{terminate} holds with \mbox{$\varepsilon = 10^{-4}$}, while the RADA-PGD and MPGDA algorithms are terminated when they find an $\varepsilon$-game-stationary point of their respective minimax formulations with the same $\varepsilon$.  Moreover, for all the compared algorithms, we set the maximum iteration number as 1000. We also perform a postprocessing step for FSIPL by projecting the final iterate onto $\mathcal{M}$.

The computational results averaged over 50 runs on different settings of $(p,\mu)$ with $n=500$ data points are summarized in Table \ref{tab:SSC}, where we report the obtained objective function value of problem \eqref{SSC} (Obj.), the CPU time in seconds (Time), and the number of outer iterations (Iter.). We observe that all the compared algorithms attain comparable objective function values for different number of groups $p$ and regularization parameters $\mu$.  Meanwhile, the FSIPL algorithm consistently outperforms the other two algorithms in terms of CPU time.

\begin{table}[htbp]
\centering
\caption{Results for SSC}
\label{tab:SSC}
\setlength{\tabcolsep}{2.8pt}
\small
\begin{tabular}{
 c
 S[table-format=2.3]
 S[table-format=2.3]
 S[table-format=2.0]
 S[table-format=2.3]
 S[table-format=2.3]
 S[table-format=3.0]
 S[table-format=2.3]
 S[table-format=2.3]
 S[table-format=3.0]
}
\toprule
\multirow{2}{*}{\makecell{Parameters \\($p$, $\mu$)}} &
\multicolumn{3}{c}{{FSIPL}} &
\multicolumn{3}{c}{{MPGDA}} &
\multicolumn{3}{c}{{RADA\_PGD}} \\
\cmidrule(lr){2-4} \cmidrule(lr){5-7} \cmidrule(lr){8-10}
& {Obj.} & {Time} & {Iter.} & {Obj.} & {Time} & {Iter.} & {Obj.} & {Time} & {Iter.} \\
\midrule
(5,  0.5)  & 7.489 & \textbf{1.860} & 24  & 7.487 & 3.600 & 117 & 7.490 & 6.581 & 567 \\
(10, 0.5)  & 14.975 & \textbf{2.267} & 25  & 14.974 & 3.622 & 120 & 14.978 & 5.214 & 407 \\
(15, 0.5)  & 22.461 & \textbf{2.296} & 27  & 22.461 & 3.341 & 118 & 22.466 & 4.601 & 332 \\
(20, 0.5)  & 29.948 & \textbf{2.542} & 28  & 29.949 & 3.338 & 117 & 29.953 & 4.512 & 293 \\
\midrule
(5,  0.2)  & 5.988 & \textbf{2.045} & 24  & 5.987 & 2.934 & 104 & 5.988 & 5.722 & 487 \\
(5,  0.4)  & 6.989 & \textbf{1.863} & 24  & 6.987 & 3.452 & 114 & 6.989 & 6.389 & 547 \\
(5,  0.6)  & 7.989 & \textbf{1.987} & 26  & 7.987 & 3.746 & 121 & 7.991 & 6.758 & 583 \\
(5,  0.8)  & 8.989 & \textbf{2.400} & 32  & 8.987 & 4.046 & 127 & 8.992 & 6.785 & 580 \\
(5,  1.0)  & 9.990 & \textbf{2.846} & 36  & 9.987 & 4.213 & 131 & 9.994 & 6.896 & 598 \\
\bottomrule
\end{tabular}
\end{table}

\section*{}

\begin{appendices}
 \section{Examples of embedded submanifolds that satisfy the items (i) and (ii) of Assumption \ref{assp:N}.} \label{append} 
\par ~~

\textbf{1. Stiefel Manifold.} In this case,
\[\mathcal{M}=\mathrm{St}(n,p):=\{X\in\mathbb{R}^{n\times p}:X^{\top}X=I_p\},\]
and $h:\mathbb{R}^{n\times p}\to\mathbb{S}^p$ is defined by \[h(X)=X^{\top}X-I_p.\]
 For $X\in\mathbb{R}^{n\times p}$, let $X = U \Sigma V^T$ be the singular value decomposition of $X \in \mathbb{R}^{n \times p}$, where $U \in \mathrm{St}(n,p)$, $V \in \mathrm{St}(p,p)$ and $\Sigma$ is a diagonal matrix with diagonal entries $\sigma_1\ge\sigma_2\ge\cdots\ge \sigma_p \ge 0$, $i=1,2,\dots,p$. It is classical that $\widehat{X} := U V^T$ is a nearest point of $X$ in $\mathcal{M}$, and hence
\begin{equation}\label{eq-dist}
\text{dist}(X, \mathcal{M}) = \| X - \widehat{X} \|_F = \| U(\Sigma - I)V^T \|_F = \left(\sum_{i=1}^{p} (\sigma_i - 1)^2\right)^{\frac{1}{2}}.
\end{equation}
It follows that
\begin{equation*}\label{ineq-dist}
\text{dist}(X, \mathcal{M}) \leq \left(\sum_{i=1}^{p} (\sigma_i^2 - 1)^2\right)^{\frac{1}{2}} = \| h(X) \|_F.
\end{equation*}
Now let $0 < \theta < \frac{1}{2}$ and $X \in \mathcal{M}_{2\theta}$. Then invoking \eqref{eq-dist}, it holds that $\left(\sum_{i=1}^{p} (\sigma_i - 1)^2\right)^{\frac{1}{2}} \leq 2\theta$, which implies that
\begin{equation}\label{ineq-theta}
1 - 2\theta \leq \sigma_i \leq 1 + 2\theta, \quad i=1,2,\dots,p.
\end{equation}
 Next, we estimate the Lipschitz constant of $\nabla h$. For any $W\in\mathbb{R}^{n\times p}$, it holds that
\begin{equation*}
    \nabla h(X)[W]=X^{\top}W+W^{\top}X,
\end{equation*}
which yields that the Lipschitz constant $L_h=2$. Finally, we estimate the eigenvalue bounds of $H(X) := \nabla h(X)\nabla h(X)^{\top}$. For any $Y \in \mathbb{S}^p$, we have that \[H(X)[Y] = 2X^T X Y + 2Y X^T X,\] and thus
\begin{equation*}
\langle Y, H(X)[Y] \rangle = 2\text{Tr}(Y^T X^T X Y) + 2\text{Tr}(Y^T Y X^T X) = 4\text{Tr}(Y^T X^T X Y).
\end{equation*}
This together with \eqref{ineq-theta} tells that all eigenvalues of $H(X)$ lie in the interval \[[4(1-2\theta)^2, 4(1+2\theta)^2].\]
Based on the above discussions, we deduce that for the Stiefel manifold, the items (i) and (ii) of Assumption \ref{assp:N} hold with 
\begin{equation}\label{append-Stiefel}
  \kappa=1,  \quad 0<\theta<\frac12,\quad C_1=4(1-2\theta)^2,\quad C_2=4(1+2\theta)^2.
\end{equation}

\textbf{2. Oblique Manifold.}
In this case,
\[
\mathcal{M}=\mathrm{OB}(n,p):=\{X=[x_1,\dots,x_p]\in\mathbb{R}^{n\times p}:\|x_i\|^2=1,\ i=1,\dots,p\},
\]
where \(x_i\in\mathbb{R}^n\) denotes the \(i\)-th column of \(X\), and \(h:\mathbb{R}^{n\times p}\to\mathbb{R}^p\) is defined by
\[
h(X)=\big(\|x_1\|^2-1,\dots,\|x_p\|^2-1\big)^\top.
\]
For any \(X=[x_1,\dots,x_p]\in\mathbb{R}^{n\times p}\), define \(\widehat X=[\hat x_1,\dots,\hat x_p]\in\mathbb{R}^{n\times p}\) columnwise by
\[
\hat x_i=
\begin{cases}
\dfrac{x_i}{\|x_i\|}, & x_i\neq 0,\\[1ex]
e_i, & x_i=0,
\end{cases}
\qquad i=1,\dots,p,
\]
where each \(e_i\in\mathbb{R}^n\) is an arbitrary fixed unit vector. Clearly, \(\widehat X\) is a nearest point of \(X\) in \(\mathcal{M}\), and hence
\begin{equation}\label{eq-dist-OB}
    \operatorname{dist}(X,\mathcal{M})
=
\|X-\widehat X\|_F
=
\left(\sum_{i=1}^p \|x_i-\hat x_i\|^2\right)^{1/2}=\left(\sum_{i=1}^p (\|x_i\|-1)^2\right)^{1/2}.
\end{equation}
It follows from $|\|x_i\|-1|\leq |\|x_i\|^2-1|$ that 
\begin{equation*}
     \operatorname{dist}(X,\mathcal{M})
\le \left(\sum_{i=1}^p (\|x_i\|^2-1)^2\right)^{1/2}=\|h(X)\|.
\end{equation*}
Now let $0 < \theta < \frac{1}{2}$ and $X \in \mathcal{M}_{2\theta}$. By \eqref{eq-dist-OB}, it holds that $\left(\sum_{i=1}^p (\|x_i\|^2-1)^2\right)^{1/2} \leq 2\theta$, which implies that
\begin{equation}\label{ineq-theta-OB}
1 - 2\theta \leq \|x_i\| \leq 1 + 2\theta, \quad i=1,2,\dots,p.
\end{equation} 
Next, we estimate the Lipschitz constant of $\nabla h$ and the eigenvalue bounds of $H(X) := \nabla h(X)\nabla h(X)^{\top}$. For any \(y=(y_1,\dots,y_p)^\top\in\mathbb{R}^p\), one has
\[
\nabla h(X)^\top y=(2y_1x_1,\dots,2y_px_p),
\]
thus the Lipschitz constant $L_h=2$ and 
\[
\langle y,H(X)y\rangle
=
\|\nabla h(X)^\top y\|^2
=
4\sum_{i=1}^p \|x_i\|^2 y_i^2.
\]
Using this and \eqref{ineq-theta-OB},
we deduce that
\[
4(1-2\theta)^2\|y\|^2
\le
\langle y,H(X)y\rangle
\le
4(1+2\theta)^2\|y\|^2,
\qquad \forall\, y\in\mathbb{R}^p.
\]
Hence all eigenvalues of \(H(X)\) lie in the interval
\[
[\,4(1-2\theta)^2,\ 4(1+2\theta)^2\,].
\]
Consequently, we conclude that in the case of oblique manifold, the items (i) and (ii)  of Assumption \ref{assp:N} hold with
\[
\kappa=1,\quad 0<\theta<\frac{1}{2},\quad C_1=4(1-2\theta)^2,\quad C_2=4(1+2\theta)^2.
\]

\section{%
  \texorpdfstring
    {The iterates $\lambda^k,~d^k,~v^k,~\mu^k$ generated at the end of Section \ref{subsec-FSIPL procedure} are consistent with \eqref{def:d^k-comp}, \eqref{def:v^k-comp} and satisfy \eqref{inexact-solution-comp}.}
    {The iterates (lambda, d, v, mu) are consistent with equations from Section ...}%
}\label{append-2}
\par ~~

First, it is easy to check that $d^k$ defined in \eqref{def-d^k-A=I} is an optimal solution to
\begin{equation}
\min_{d \in \mathbb{R}^n} \langle \nabla f(x^k), d \rangle + g(x^k + d) + \langle \lambda^k, \nabla h(x^k) d \rangle + \frac{1}{t_k} \|d\|^2,
\end{equation}
which indicates that with $v^k = d^k$, the pair $(d^k, v^k)$ is also an optimal solution to the following constrained convex problem:
\begin{equation}
\begin{aligned}
\min_{d, v \in \mathbb{R}^n} \quad & \langle \nabla f(x^k), d \rangle + g(x^k + v) + \langle \lambda^k, \nabla h(x^k) d \rangle + \frac{1}{2t_k} (\|d\|^2 + \|v\|^2)~
\text{s.t.}~d = v.
\end{aligned}
\end{equation}
By the strong duality of this problem, there exists $\widetilde{\mu}^k \in \mathbb{R}^n$ such that 
\begin{equation*}
\begin{aligned}
    (d^k,v^k)=\argmin_{d\in\mathbb{R}^n,~v\in\mathbb{R}^n} &\langle \nabla f(x^k), d\rangle+g(x^k+v)+\langle \lambda^k,\nabla h(x^k)d\rangle\\
    &+\langle \widetilde{\mu}^k,d-v\rangle+\frac{1}{2t_k}\|d\|^2+\frac{1}{2t_k}\|v\|^2,
    \end{aligned}
\end{equation*}
and thus we deduce that
 \begingroup
\setlength{\abovedisplayskip}{3pt plus 1pt minus 1pt}
\setlength{\belowdisplayskip}{3pt plus 1pt minus 1pt}
\setlength{\abovedisplayshortskip}{0pt plus 1pt}
\setlength{\belowdisplayshortskip}{3pt plus 1pt minus 1pt}
\begin{align}
d^k&=-t_k(\nabla f(x^k)+\nabla h(x^k)^{\top}\lambda^k+\widetilde{\mu}^k),\label{pre-d}\\
 v^k&=\mathrm{prox}_{t_kg}\left(x^k+t_k\widetilde{\mu}^k\right)-x^k, \label{pre-v}
\end{align}
\endgroup
Now compare \eqref{pre-d} with \eqref{def-mu-A=I}, we know that $\mu^k=\widetilde{\mu}^k$. This together with \eqref{pre-d} tells that $d^k$ defined in \eqref{def-d^k-A=I} is consistent with \eqref{def:d^k-comp}, while \eqref{pre-v} and $\mathcal{A}=I_n$ indicate that $v^k$ satisfies \eqref{def:v^k-comp}. Moreover, by a direct calculation and \eqref{GG-comp-0}, we obtain that
\begin{equation*}
\nabla_{\lambda} G_k(\lambda^k,\mu^k)=-\nabla h(x^k)d^k=\nabla\widetilde{G}_k(\lambda^k),\quad \nabla_{\mu} G_k(\lambda^k,\mu^k)=v^k-d^k=0.
\end{equation*}
Combining this and the fact that $\|\nabla\widetilde{G}_k(\lambda^k)\| \leq \Delta_k$, we immediately obtain \eqref{inexact-solution-comp}.

\section{Proof for Proposition \ref{prop:kl-converge}.}
\begin{proof}\label{append-proof}
    Since $\{(u^k,\upsilon^k)\}$ is bounded, the set $ \Lambda$ is nonempty and compact. If there exists $\hat{k}$ such that for all $k\geq\hat{k}+1$, $(u^{k},\upsilon^{k})\equiv(u^{\hat{k}},\upsilon^{\hat{k}})$, then $\xi_{k}\equiv0$ and $\Lambda=\{(u^{\hat{k}},\upsilon^{\hat{k}})\}$, due to condition (a). The conclusion follows immediately.
    Otherwise, by Lemma \ref{lemma:uniform^kL}, we know that there exists a continuous concave function $ \varphi $ satisfying conditions (i)-(ii) of Definition \ref{lemma:KL} and an integer $k_0\geq\max\{k_1,k_2\}$ such that for all $ (u^*,\upsilon^*) \in \Lambda $ and every $ k \geq k_0 $,  
\begin{equation*}
\varphi'\big( \psi(u^k, \upsilon^k) - \psi(u^*, \upsilon^*) \big) \text{dist}\big( 0, \partial \psi(u^k, \upsilon^k) \big) \geq 1. 
\end{equation*}  
 Then, by condition (b) (Relative error), we have
\begin{equation}
\varphi'\big( \psi(u^k, \upsilon^k) - \psi({u}^*, \upsilon^*) \big) \geq \frac{1}{\text{dist}\big( 0, \partial \psi(u^k, \upsilon^k) \big)}\geq \frac{1}{c_2\xi_{k}} .
\end{equation}  
Define 
\[\Omega_{k,k+1}:=\varphi\big( \psi(u^k, \upsilon^k) - \psi({u}^*, \upsilon^*) \big)-\varphi\big( \psi(u^{k+1}, \upsilon^{k+1}) - \psi({u}^*, \upsilon^*) \big).\]
Due to the concavity and monotonicity of $\varphi$, there holds
\begin{equation*}   
\begin{aligned}
    \Omega_{k,k+1}\geq & \varphi'\big( \psi(u^k, \upsilon^k) - \psi(u^*,\upsilon^*) \big)\big( \psi(u^k, \upsilon^k) - \psi(u^{k+1}, \upsilon^{k+1}) \big)\\
    \geq & \frac{ \psi(u^k, \upsilon^k) - \psi(u^{k+1}, \upsilon^{k+1})}{c_2\xi_k}
    \geq \frac{ c_1\xi_{k+1}^2}{c_2\xi_k},
    \end{aligned}
\end{equation*}
which implies
\begin{equation}\label{KL_Theorem_ineq}
    \frac{c_2}{c_1}\Omega_{k,k+1}+\xi_k\geq \frac{\xi_{k+1}^2+\xi_k^2}{\xi_k}\geq 2\xi_{k+1}.
\end{equation}
Summing \eqref{KL_Theorem_ineq} for $k=k_{0},\dots,N$, we obtain
\begin{equation*}
    \sum_{k=k_0+1}^N \xi_{k}\leq \xi_{k_0}+\frac{c_2}{c_1}\sum_{k=k_0}^{N-1}\Omega_{k,k+1}\leq \xi_{k_0}+\frac{c_2}{c_1}\varphi\big( \psi(u^{k_{0}}, \upsilon^{k_0}) - \psi({u}^*, \upsilon^*) \big),
\end{equation*}
where the last inequality comes from the non-negativity of $\varphi$.
Therefore, $\sum_{k=1}^{\infty}\xi_k<\infty$ and then  $\xi_{k}\to 0$. This together with condition (b) and the closedness of $\partial \psi$ yields that $0\in \partial\psi(u^*,\upsilon^*)$ for any cluster point of $\{(u^k,\upsilon^k):k\in\mathbb{N}\}$. Moreover, since $\|u^{k+1}-u^k\|\leq c_0\xi_{k+1}$, we conclude that
\[\sum_{k=1}^{\infty}\|u^{k+1}-u^{k}\|<\infty,\]
 which completes the proof.
\end{proof}




\end{appendices}


\bibliography{sn-bibliography}

\end{document}